\newcommand{\cb}{\color{blue}}
\newcommand{\cm}{\color{magenta}}
\newtheorem{thm}{Theorem}[section]
\newtheorem{prop}[thm]{Proposition}
\newtheorem{lem}[thm]{Lemma}
\newtheorem{lem-def}[thm]{Lemma-Definition}
\newtheorem{cor}[thm]{Corollary}
\newtheorem{conject}[thm]{Conjecture}
\theoremstyle{definition}
\newtheorem{rmk}[thm]{Remark}
\newtheorem{dfn}[thm]{Definition}
\numberwithin{equation}{section}
\newcommand{\nc}{\newcommand}
\nc{\on}{\operatorname}
\nc{\fraka}{{\mathfrak a}} \nc{\bba}{{\mathbf a}}
\nc{\frakb}{{\mathfrak b}}
\nc{\frakc}{{\mathfrak c}}
\nc{\frakd}{{\mathfrak d}}
\nc{\frake}{{\mathfrak e}}
\nc{\frakf}{{\mathfrak f}}
\nc{\frakg}{{\mathfrak g}}
\nc{\frakh}{{\mathfrak h}}
\nc{\fraki}{{\mathfrak i}}
\nc{\frakj}{{\mathfrak j}}
\nc{\frakk}{{\mathfrak k}}
\nc{\frakl}{{\mathfrak l}}
\nc{\frakm}{{\mathfrak m}}
\nc{\frakn}{{\mathfrak n}}
\nc{\frako}{{\mathfrak o}}
\nc{\frakp}{{\mathfrak p}}
\nc{\frakq}{{\mathfrak q}}
\nc{\frakr}{{\mathfrak r}}
\nc{\fraks}{{\mathfrak s}}
\nc{\frakt}{{\mathfrak t}}
\nc{\fraku}{{\mathfrak u}}
\nc{\frakv}{{\mathfrak v}}
\nc{\frakw}{{\mathfrak w}}
\nc{\frakx}{{\mathfrak x}}
\nc{\fraky}{{\mathfrak y}}
\nc{\frakz}{{\mathfrak z}}
\nc{\frakA}{{\mathfrak A}}
\nc{\frakB}{{\mathfrak B}}
\nc{\frakC}{{\mathfrak C}}
\nc{\frakD}{{\mathfrak D}}
\nc{\frakE}{{\mathfrak E}}
\nc{\frakF}{{\mathfrak F}}
\nc{\frakG}{{\mathfrak G}}
\nc{\frakH}{{\mathfrak H}}
\nc{\frakI}{{\mathfrak I}}
\nc{\frakJ}{{\mathfrak J}}
\nc{\frakK}{{\mathfrak K}}
\nc{\frakL}{{\mathfrak L}}
\nc{\frakM}{{\mathfrak M}}
\nc{\frakN}{{\mathfrak N}}
\nc{\frakO}{{\mathfrak O}}
\nc{\frakP}{{\mathfrak P}}
\nc{\frakQ}{{\mathfrak Q}}
\nc{\frakR}{{\mathfrak R}}
\nc{\frakS}{{\mathfrak S}}
\nc{\frakT}{{\mathfrak T}}
\nc{\frakU}{{\mathfrak U}}
\nc{\frakV}{{\mathfrak V}}
\nc{\frakW}{{\mathfrak W}}
\nc{\frakX}{{\mathfrak X}}
\nc{\frakY}{{\mathfrak Y}}
\nc{\frakZ}{{\mathfrak Z}}
\nc{\bbA}{{\mathbb A}}
\nc{\bbB}{{\mathbb B}}
\nc{\bbC}{{\mathbb C}}
\nc{\bbD}{{\mathbb D}}
\nc{\bbE}{{\mathbb E}}
\nc{\bbF}{{\mathbb F}} \nc{\bbf}{{\mathbf f}}
\nc{\bbG}{{\mathbb G}}
\nc{\bbH}{{\mathbb H}}
\nc{\bbI}{{\mathbb I}}
\nc{\bbJ}{{\mathbb J}}
\nc{\bbK}{{\mathbb K}}
\nc{\bbL}{{\mathbb L}}
\nc{\bbM}{{\mathbb M}}
\nc{\bbN}{{\mathbb N}}
\nc{\bbO}{{\mathbb O}}
\nc{\bbP}{{\mathbb P}}
\nc{\bbQ}{{\mathbb Q}}
\nc{\bbR}{{\mathbb R}}
\nc{\bbS}{{\mathbb S}}
\nc{\bbT}{{\mathbb T}}
\nc{\bbU}{{\mathbb U}}
\nc{\bbV}{{\mathbb V}}
\nc{\bbW}{{\mathbb W}}
\nc{\bbX}{{\mathbb X}}
\nc{\bbY}{{\mathbb Y}}
\nc{\bbZ}{{\mathbb Z}}
\nc{\calA}{{\mathcal A}}
\nc{\calB}{{\mathcal B}}
\nc{\calC}{{\mathcal C}}
\nc{\calD}{{\mathcal D}}
\nc{\calE}{{\mathcal E}}
\nc{\calF}{{\mathcal F}}
\nc{\calG}{{\mathcal G}}
\nc{\calH}{{\mathcal H}}
\nc{\calI}{{\mathcal I}}
\nc{\calJ}{{\mathcal J}}
\nc{\calK}{{\mathcal K}}
\nc{\calL}{{\mathcal L}}
\nc{\calM}{{\mathcal M}}
\nc{\calN}{{\mathcal N}}
\nc{\calO}{{\mathcal O}}
\nc{\calP}{{\mathcal P}}
\nc{\calQ}{{\mathcal Q}}
\nc{\calR}{{\mathcal R}}
\nc{\calS}{{\mathcal S}}
\nc{\calT}{{\mathcal T}}
\nc{\calU}{{\mathcal U}}
\nc{\calV}{{\mathcal V}}
\nc{\calW}{{\mathcal W}}
\nc{\calX}{{\mathcal X}}
\nc{\calY}{{\mathcal Y}}
\nc{\calZ}{{\mathcal Z}}
\nc{\scrA}{{\mathscr A}}
\nc{\scrB}{{\mathscr B}}
\nc{\scrR}{{\mathscr R}}
\nc{\Bmu}{\mbox{$\raisebox{-0.59ex}{$l$}\hspace{-0.18em}\mu\hspace{-0.88em}\raisebox{-0.98ex}{\scalebox{2}{$\color{white}.$}}\hspace{-0.416em}\raisebox{+0.88ex}{$\color{white}.$}\hspace{0.46em}$}{}}
\nc{\bnu}{{\bar{ \nu}}}
\nc{\olO}{\bar{\calO}}
\nc{\al}{{\alpha}} 
\nc{\be}{{\beta}}
\nc{\ga}{{\gamma}} \nc{\Ga}{{\Gamma}}
 \nc{\hGa}{\hat{\Gamma}}
\nc{\ve}{{\varepsilon}} 
\nc{\la}{{\lambda}} \nc{\La}{{\Lambda}}
\nc{\om}{\omega} \nc{\Om}{\Omega} 
\nc{\sig}{{\sigma}} \nc{\Sig}{{\Sigma}}
\nc{\tnb}{\psi_{\rm tame}}
\nc{\oM}{\overline{{M}}}
\nc{\op}{{\on{op}}}
\nc{\ad}{{\on{ad}}}
\nc{\alg}{{\on{alg}}}
\nc{\Ad}{{\on{Ad}}}
\nc{\Adm}{{\on{Adm}}} \nc{\aff}{{\on{aff}}}
\nc{\Aut}{{\on{Aut}}}
\nc{\Bun}{{\on{Bun}}}
\nc{\cha}{{\on{char}}}
\nc{\der}{{\on{der}}}
\nc{\Der}{{\on{Der}}}
\nc{\diag}{{\on{diag}}}
\nc{\End}{{\on{End}}}
\nc{\Fl}{{\calF\!\ell}}
\nc{\Tr}{{\on{Transp}}}
\nc{\TR}{{\calT\!\calR}}
\nc{\Gal}{{\on{Gal}}}
\nc{\Gr}{{\on{Gr}}}
\nc{\rH}{{\on{H}}}
\nc{\Hom}{{\on{Hom}}}
\nc{\IC}{{\on{IC}}}
\nc{\id}{{\on{id}}}
\nc{\Id}{{\on{Id}}}
\nc{\ind}{{\on{ind}}}
\nc{\Ind}{{\on{Ind}}}
\nc{\Lie}{{\on{Lie}}}
\nc{\Pic}{{\on{Pic}}}
\nc{\pr}{{\on{pr}}}
\nc{\Res}{{\on{Res}}}
\nc{\res}{{\on{res}}} \nc{\Sat}{{\on{Sat}}}
\nc{\s}{{\on{sc}}}
\nc{\drv}{{\on{der}}}
\nc{\sgn}{{\on{sgn}}}
\nc{\Spec}{{\on{Spec}}}\nc{\Spf}{\on{Spf}} 
\nc{\Sph}{\on{Sph}}
\nc{\St}{{\on{St}}}
\nc{\tr}{{\on{tr}}}
\nc{\Mod}{{\mathrm{-Mod}}}
\nc{\Hilb}{{\on{Hilb}}} 
\nc{\Ext}{{\on{Ext}}} 
\nc{\vs}{{\on{Vec}}}
\nc{\ev}{{\on{ev}}}
\nc{\nO}{{\breve{\calO}}}
\nc{\tS}{{\tilde{S}}}
\nc{\spe}{{\on{sp}}}
\nc{\loc}{{\on{loc}}}
\nc{\nscrR}{{\mathscr{R}^{\on{nr}}}}
\nc{\GL}{{\on{GL}}}
\nc{\U}{{\on{U}}}
\nc{\Gl}{\on{Gl}} 
\nc{\GSp}{{\on{GSp}}}
\nc{\gl}{{\frakg\frakl}}
\nc{\SL}{{\on{SL}}} 
\nc{\SU}{{\on{SU}}} 
\nc{\SO}{{\on{SO}}}
\nc{\PGL}{{\on{PGL}}}
\nc{\Conv}{{\on{Conv}}}
\nc{\Rep}{{\on{Rep}}}
\nc{\Dom}{{\on{Dom}}}
\nc{\red}{{\on{red}}}
\nc{\act}{{\on{act}}}
\nc{\nr}{{\on{nr}}}
\nc{\ctf}{{\on{ctf}}}
\nc{\str}{{\on{-}}} 
\nc{\os}{{\bar{s}}}
\nc{\oeta}{{\bar{\eta}}}
\nc{\hookto}{\hookrightarrow}
\nc{\longto}{\longrightarrow}
\nc{\leftto}{\leftarrow}
\nc{\onto}{\twoheadrightarrow}
\nc{\lonto}{\twoheadleftarrow}
\nc{\uG}{{\underline{G}}}
\nc{\uA}{{\underline{A}}}
\nc{\uS}{{\underline{S}}}
\nc{\uT}{{\underline{T}}}
\nc{\uM}{{\underline{M}}}
\nc{\uP}{{\underline{P}}}
\nc{\uB}{{\underline{B}}}
\nc{\uN}{{\underline{N}}}
\nc{\ucG}{{\underline{\calG}}}
\nc{\ucA}{{\underline{\calA}}}
\nc{\ucS}{{\underline{\calS}}}
\nc{\ucT}{{\underline{\calT}}}
\nc{\ucM}{{\underline{\calM}}}
\nc{\ucP}{{\underline{\calP}}}
\nc{\ucN}{{\underline{\calN}}}
\nc{\bF}{{\breve{F}}}
\nc{\oFl}{{\overline{\Fl}}} 
\nc{\bU}{{\overline{U}}}
\nc{\tGr}{{\tilde{\Gr}}}
\nc{\cGr}{\calG\! r}
\nc{\oGr}{\overline{\on{Gr}}} 
\nc{\ocGr}{\overline{\calG\! r}}
\nc{\co}{{\colon}}
\nc{\sch}[1]{(Sch/{#1})}
\nc{\HypLoc}[1]{HypLoc({#1})}
\nc{\ohtimes}{\stackrel{!}{\otimes}}
\nc{\boxtilde}{\widetilde{\boxtimes}}
\nc{\vstar}{{\varhexstar}}
\nc{\Div}{\on{Div}}
\nc{\bslash}{\backslash}
\nc{\algQl}{{\bar{\bbQ}_\ell}}
\nc{\sF}{{\bar{F}}}
\nc{\nF}{{\breve{F}}}
\nc{\nW}{{W^{\on{nr}}}}
\nc{\sk}{{\bar{k}}}
\nc{\cont}{\on{c}}
\nc{\Supp}{\on{Supp}}
\nc{\blt}{\bullet}  
\nc{\dom}{\on{dom}}
\nc{\scon}{{\on{sc}}} 
\nc{\Affine}{\on{Aff}} 
\nc{\nscrA}{\mathscr{A}^{\on{nr}}} 
\nc{\nfraka}{{\bbf^{\on{nr}}}}
\nc{\ran}{{\rangle}}
\nc{\lan}{{\langle}}
\nc{\bk}{{\bar{k}}}
\nc{\tF}{{\tilde{F}}}
\nc{\sS}{{\bar{S}}}
\nc{\LG}{{^\text{L}\hspace{-0.04cm}G}}
\nc{\LL}{{^\text{L}\hspace{-0.07cm}L}}
\nc{\pot}[1]{ [\hspace{-0,5mm}[ {#1} ]\hspace{-0,5mm}] }
\nc{\rpot}[1]{ (\hspace{-0,7mm}( {#1} )\hspace{-0,7mm}) }
\nc{\defined}{\hspace{0.1cm}\stackrel{\text{\tiny \rm def}}{=}\hspace{0.1cm}}
\begin{document}

\title[Smoothness of Schubert varieties]{Smoothness of Schubert varieties in \\ twisted affine Grassmannians}
\author[T.\,J.\,Haines and T.\,Richarz]{by Thomas J. Haines and Timo Richarz}

\address{Department of Mathematics, University of Maryland, College Park, MD 20742-4015, DC, USA}
\email{tjh@math.umd.edu}

\address{Technical University of Darmstadt, Department of Mathematics, 64289 Darmstadt, Germany}
\email{richarz@mathematik.tu-darmstadt.de}

\thanks{Research of T.H.~partially supported by NSF DMS-1406787, and research of T.R.~ funded by the Deutsche Forschungsgemeinschaft (DFG, German Research Foundation) - 394587809.}

\maketitle

\begin{center}
\textit{To Michael Rapoport on his 70th birthday}
\end{center}

\begin{abstract} We give a complete list of smooth and rationally smooth normalized Schubert varieties in the twisted affine Grassmannian associated with a tamely ramified group and a special vertex of its Bruhat-Tits building. The particular case of the quasi-minuscule Schubert variety in the quasi-split but non-split form of $\on{Spin}_8$ (``ramified triality'') provides an input needed in the article by He-Pappas-Rapoport classifying Shimura varieties with good or semi-stable reduction. 
\end{abstract}

\tableofcontents


\thispagestyle{empty}

\section{Introduction}

Let $k$ be an algebraically closed field, and let $G$ be a connected reductive group over the Laurent series field $F=k\rpot{t}$. Associated with any special vertex $x$ of the Bruhat-Tits building is the twisted affine Grassmannian $\Gr_{G,x}$. Under the additional assumption that $G$ splits over a tamely ramified extension of $F$, we give a complete answer to the question of whether a given (normalized) Schubert variety in $\Gr_{G,x}$ is smooth or singular (resp.~rationally smooth or not rationally smooth). 

If $G$ is split and ${\rm char}(k) = 0$, then such a classification is known by the work of Evens-Mirkovi\'c \cite{EM99} and Malkin-Ostrik-Vybornov \cite{MOV05}. The answer is strikingly simple: the Schubert variety ${\rm Gr}^{\leq \mu}_{G,x}$ corresponding to a cocharacter $\mu \in X_*(G)$ is smooth if and only if $\mu$ is minuscule. 

If $G$ is not split, then our classification has a similar flavor, but the phenomenon of {\it exotic smoothness} enters in: there are surprising additional cases of smoothness, where the group $G$ is a ramified odd unitary group and $\mu$ is quasi-minuscule. Unlike the split case, the nature of the special vertex $x$ now plays a pivotal role which was first observed by the second named author in \cite[Prop 4.16]{Arz09}; see Theorem \ref{sm_thm} for a precise statement. 

Our work is intertwined with the work of He-Pappas-Rapoport \cite{HPR} which classifies Shimura varieties with good or semi-stable reductions by giving a corresponding classification of (slight modifications of) Pappas-Zhu local models \cite{PZ13}. 
The connection between this article and \cite{HPR} arises in the following way: in \cite[Thm.\,9.1]{PZ13}, it is proved that the special fiber of any local model $\mathbb M^{\rm loc}_K(G,\{\mu\})$ is isomorphic to an explicit union of Schubert varieties in a (twisted) partial affine flag variety over $k$. The smooth local models are those whose special fiber is a single smooth Schubert variety. By \cite[Thm.\,1.2]{HPR}, this implies that the parahoric $K=K_x$ is a special maximal parahoric associated with some special vertex $x$, and occurs in the following situations: either $G$ is split so that $x$ is hyperspecial, or $G$ is non-split and the triple $(G,\mu,x)$ is of {\em exotic good reduction type}. Exotic good reduction comes in three kinds: 
\begin{itemize}
\item[1)] even unitary exotic, discovered by Pappas-Rapoport \cite[5.3]{PR09}, 
\item[2)] odd unitary exotic, discovered by the second named author \cite[Prop.\,4.16]{Arz09}, and 
\item[3)] orthogonal exotic, discovered by He-Pappas-Rapoport \cite[$\S5.1$]{HPR}. 
\end{itemize}
In cases 1) and 3), the corresponding Schubert variety $\Gr_{G,x}^{\leq \bar{\mu}}$ is minuscule (hence smooth), and the choice of special vertex $x$ plays no role. In case 2), the Schubert variety $\Gr_{G,x}^{\leq \bar{\mu}}$ is quasi-minuscule, and the choice of special vertex plays a crucial role. This case relates to the phenomenon of exotic smoothness in twisted affine Grassmannians. 

\medskip

\subsection{Statement of the results} Let $k$ be an algebraically closed field, and let $F=k\rpot{t}$ be the formal Laurent series field, with absolute Galois group $I$. Let $G$ be a connected reductive group over $F$ which is adjoint, absolutely simple, and splits over a tamely ramified extension of $F$. Associated to every special vertex $x$ in the Bruhat-Tits building, we have the twisted affine Grassmannian $\Gr_{G,x}$. If $G$ is split, all special vertices are conjugate under $G_{\on{ad}}(F)$. If $G$ is not split, this is no longer true (cf.\,\cite[$\S2.5$]{Ti77}). This fact plays an important role in the phenomenon of exotic smoothness of Schubert varieties.

We choose further a pair $T\subset B\subset G$ of a maximal torus and a Borel subgroup defined over $F$ which are in good position with respect to $x$, cf.\,$\S\ref{Ratl_Smooth_Sec}$ below. Associated with each dominant $\bar{\mu}\in X_*(T)^+_I$ is the Schubert variety $\Gr_{G,x}^{\leq \bar{\mu}}\subset \Gr_{G,x}$ which is an irreducible projective $k$-variety. 

Let $M$ denote the set of minimal elements of $X_*(T)_I^+\backslash \{0\}$ with respect to the partial ordering $\leq$ defined by the \'{e}chelonnage coroots $\breve{\Sigma}^\vee \subset X_*(T)_I$, cf.~\cite{Hai18}. Recall that $\bar{\mu} \in M$ is 
\begin{itemize}
\item {\em minuscule} if $\langle \alpha, \bar{\mu} \rangle \in \{0, \pm 1\}$ for all roots $\alpha \in \breve{\Sigma}$
\item {\em quasi-minuscule}, otherwise.
\end{itemize}
In the second case, there exists a unique root $\gamma \in \breve{\Sigma}$ with $\langle \gamma, \bar{\mu} \rangle \geq 2$ and $\gamma$ is necessarily a highest root, and $\bar{\mu} = \gamma^\vee$. Further, $\langle \alpha, \bar{\mu} \rangle \in \{0, \pm 1, \pm 2\}$ for all $\alpha \in \breve{\Sigma}$, cf.~\cite[Lem.\,1.1]{NP01}. Conversely, if $\bar{\mu} \in X_*(T)_I^+\backslash \{0\}$ belongs to the coroot lattice, and if $|\langle \alpha, \bar{\mu}\rangle| \leq 2, \,\, \forall \alpha \in \breve{\Sigma}$, then $\bar{\mu} \in M$ and hence $\bar{\mu}$ is quasi-minuscule. Therefore, any irreducible root system possesses a {\em unique} quasi-minuscule coweight.

Geometrically, $\bar{\mu}$ being minuscule means that $\Gr_{G,x}^{\leq \bar{\mu}}=\Gr_{G,x}^{\bar{\mu}}$ is a single stratum, whereas $\bar{\mu}$ being quasi-minuscule means that $\Gr_{G,x}^{\leq \bar{\mu}}=\Gr_{G,x}^{\bar{\mu}}\amalg\{e\}$ where $e\in \Gr_{G,x}(k)$ is the base point.

Under the identification $X_*(T)_I = X^*((T^\vee)^I)$, the \'{e}chelonnage coroots $\breve{\Sigma}^\vee$ correspond to the roots for $((G^\vee)^I, (T^\vee)^I)$ by \cite[$\S5.1$]{Hai18} where $(G^\vee)^I$ is a simple and semi-simple connected reductive group with maximal torus $(T^\vee)^I$, cf.~Proposition \ref{fix_pt_prop}. Note that $\bar{\mu} \in X_*(T)_I$ is (quasi-)minuscule with respect to $\breve{\Sigma}$ if and only if it is (quasi-)minuscule when viewed as a $(T^\vee)^I$-weight. Similarly, a fundamental $(T^\vee)^I$-weight $\omega_i$ can be viewed as an element $\omega_i \in X_*(T)_I$. Our main results are as follows. 

\begin{thm}\label{ratl_sm_thm} Let $\bar{\mu}\in X_*(T)_I\bslash \{0\}$ be dominant. The Schubert variety $\Gr_{G,x}^{\leq \bar{\mu}}$ is rationally smooth if and only if $x\in \scrB(G,F)$ is any special vertex and the pair $(G,\bar{\mu})$ belongs up to isomorphism to the following list:
\begin{itemize}
\item any $G$, and $\bar{\mu}$ minuscule \textup{(}for a complete list, see \cite[\S5.2]{HPR}\textup{)}; 
\item Split groups:
\subitem $G= \on{PGL}_2$, and any $\bar{\mu}$;
\subitem $G=\on{PGL}_n$, $n\geq 3$, and $\bar{\mu}=l\cdot \om_i$, $i \in \{1, n-1\}$ and $l\geq 2$;
\subitem $G=\on{PSp}_{2n}$, $n\geq 2$, and $\bar{\mu}$ quasi-minuscule;
\subitem $G=\on{SO}_{7}$, and $\bar{\mu}=\om_3$ \textup{(}not quasi-minuscule\textup{)};
\subitem $G=G_2$, and $\bar{\mu}$ quasi-minuscule;
\item Non-split groups:
\subitem $G= \on{PU}_3$, and any $\bar{\mu}$;
\subitem $G= \on{PU}_{2n+1}$, $n\geq 2$, and $\bar{\mu}$ quasi-minuscule;
\subitem $G=\on{PSO}_{2n+2}$, $n\geq 2$, and $\bar{\mu}$ quasi-minuscule; 
\subitem $G=\on{PU}_6$, and $\bar{\mu}=\om_3$ \textup{(}not quasi-minuscule\textup{)};
\subitem $G = \,^3D_{4,2}$, the `ramified triality', and $\bar{\mu}$ quasi-minuscule.  
\end{itemize}
\end{thm}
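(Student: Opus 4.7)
The plan is to reduce rational smoothness of $\Gr_{G,x}^{\leq \bar\mu}$, via equivariance and induction on the Bruhat order, to a local check at the base point $e$; to observe that this local check depends only on the échelonnage data $(\breve\Sigma,\bar\mu)$; and then to verify the list by a case-by-case analysis driven by the classification of irreducible root systems.

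First, I would exploit the equivariance of $\Gr_{G,x}^{\leq \bar\mu}$ under the positive loop group of $\calG_x$: rational smoothness at any point of a stratum $\Gr_{G,x}^{\bar\nu}$ with $\bar\nu\leq\bar\mu$ is equivalent to rational smoothness of $\Gr_{G,x}^{\leq \bar\nu}$ at $e$, because $e$ lies in the closure of every non-minuscule stratum and the sweep of $e$ by the loop group is dense. By induction along $\leq$ on $X_*(T)_I^+$ this reduces global rational smoothness of $\Gr_{G,x}^{\leq \bar\mu}$ to local rational smoothness at $e$ for each Schubert closure $\Gr_{G,x}^{\leq \bar\nu}$ with $\bar\nu\leq\bar\mu$. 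For minuscule $\bar\mu$ the Schubert variety coincides with its open stratum and is therefore smooth. Using the identification $X_*(T)_I=X^*((T^\vee)^I)$ and that $\breve\Sigma^\vee$ is the root system of $((G^\vee)^I,(T^\vee)^I)$, the $T$-equivariant combinatorics (and hence the Poincaré polynomial) of $\Gr_{G,x}^{\leq \bar\mu}$ at $e$ depend only on $(\breve\Sigma,\bar\mu)$. This explains the independence of the rational-smoothness classification from the choice of special vertex $x$ — in sharp contrast to the smoothness theorem — and lets me organize the remainder by échelonnage type.

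The heart of the proof is then the quasi-minuscule case. When $\bar\mu=\gamma^\vee$ is quasi-minuscule, $\Gr_{G,x}^{\leq \bar\mu}=\{e\}\sqcup \Gr_{G,x}^{\bar\mu}$ is the projective cone on the closed $(G^\vee)^I$-orbit in $\bbP(V_{\bar\mu})$, and rational smoothness at $e$ is equivalent to palindromy of the cell-count Poincaré polynomial $\sum_{\bar\nu\leq\bar\mu}q^{\on{dim}\Gr_{G,x}^{\bar\nu}}$. A type-by-type check singles out $\breve\Sigma\in\{A_1,B_n,C_n,G_2\}$ as the irreducible types for which the quasi-minuscule Schubert variety is rationally smooth — recovering exactly the quasi-minuscule entries of the list. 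For $\bar\mu$ strictly above a quasi-minuscule $\bar\nu$ whose Schubert variety fails rational smoothness, the reduction to $e$ forces $\Gr^{\leq \bar\mu}$ to fail as well, eliminating all but finitely many further candidates. The surviving non quasi-minuscule pairs — $\on{PGL}_n$ with $\bar\mu=l\omega_i$ for $i\in\{1,n-1\}$, $\on{PU}_3$ with arbitrary $\bar\mu$, and $\on{SO}_7$, $\on{PU}_6$ with $\bar\mu=\omega_3$ — are then handled by explicit geometric descriptions: $\on{PGL}_n$ gives iterated projective-space bundles, $\on{PU}_3$ has échelonnage $A_1$ (so reduces to $\on{PGL}_2$), and the two $\omega_3$ cases are identified with local models studied in \cite{PR09,HPR}.

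The main obstacle I anticipate is the case-by-case quasi-minuscule analysis together with the two exceptional non quasi-minuscule pairs. Ruling out rational smoothness in types $D_n\,(n\geq 4)$, $E_6$, $E_7$, $E_8$, $F_4$ requires concrete non-palindromy computations at $e$ rather than a soft fixed-point count, and showing rational smoothness for $\on{SO}_7,\omega_3$ and $\on{PU}_6,\omega_3$ requires a precise identification of the singularity at $e$ — neither the minuscule nor the quasi-minuscule templates apply, and this is where the link to the Pappas-Rapoport/He-Pappas-Rapoport local models becomes indispensable.
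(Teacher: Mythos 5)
Your proposal takes a genuinely different route from the paper's, but it contains several gaps that are not mere technicalities.

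The paper's proof runs through the ramified geometric Satake correspondence: in Proposition~\ref{ratl_sm_prop}, using Zhu's weight--stalk formula \cite[Thm.\,5.1]{Zhu15}, rational smoothness of $\Gr_{G,x}^{\leq\bar\mu}$ is shown to be equivalent to the $(G^\vee)^I$-representation $V_{\bar\mu}$ being weight-multiplicity-free; the classification then falls out immediately from Howe's list (Theorem~\ref{mult_thm}) combined with Lemma~\ref{dual_list}, and the case-by-case root-system combinatorics you anticipate never appear. This translation is exactly what lets the argument cover the ramified groups uniformly.

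Your first step is incorrect: rational smoothness of $\Gr_{G,x}^{\leq\bar\mu}$ at a point of the stratum $\Gr_{G,x}^{\bar\nu}$ is \emph{not} equivalent to rational smoothness of $\Gr_{G,x}^{\leq\bar\nu}$ at $e$. The local structure of $\Gr_{G,x}^{\leq\bar\mu}$ near $x_{\bar\nu}$ is governed by the transversal slice to the $\bar\nu$-stratum inside $\Gr^{\leq\bar\mu}$, which depends on both $\bar\mu$ and $\bar\nu$ and is a different object from the smaller Schubert variety $\Gr^{\leq\bar\nu}$. For example, for split $\on{PGL}_n$ and $\bar\mu = l\omega_1$ with $l\geq 2$, the variety $\Gr^{\leq l\omega_1}$ is rationally smooth (it is on the list), yet is neither smooth nor an iterated projective-space bundle as you claim --- for $n=2$, $l=2$ it is already the quadric cone with a single singular point. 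The correct local invariant at $x_{\bar\lambda}$ is $d_{\bar\mu}(\bar\lambda)$, the weight multiplicity in $V_{\bar\mu}$ (Remark~\ref{ratl_sm_split_rmk}), not any invariant of the subvariety $\Gr^{\leq\bar\lambda}$ alone.

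Your second step --- using palindromy of a Poincaré polynomial to detect rational smoothness at $e$ --- has two problems. As stated, the polynomial $\sum_{\bar\nu\leq\bar\mu} q^{\dim \Gr^{\bar\nu}}$ is always palindromic in the quasi-minuscule case (there are only two strata, of dimensions $0$ and $d_{\bar\mu}$), so it cannot distinguish rationally smooth from rationally singular. What one should use is the Iwahori cell count or, equivalently, Kazhdan--Lusztig polynomials; but, as the paper is careful to point out in Remark~\ref{ratl_sm_split_rmk}, the positivity of Kazhdan--Lusztig polynomials for the unequal-parameter affine Hecke algebras arising from ramified groups is \emph{not known}, so the palindromy criterion cannot presently be invoked for the cases that make this theorem interesting. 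This is precisely the obstruction the geometric Satake argument is designed to circumvent.

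Finally, your type-by-type claim for the quasi-minuscule case is wrong: you list $\breve\Sigma\in\{A_1,B_n,C_n,G_2\}$, but the quasi-minuscule Schubert variety for split $\on{SO}_{2n+1}$ (i.e.\ $\breve\Sigma$ of type $B_n$, $(G^\vee)^I=\on{Sp}_{2n}$) is \emph{not} rationally smooth for $n\geq 3$, since the zero-weight space of the quasi-minuscule representation of $\on{Sp}_{2n}$ has dimension $n-1$. The correct list for $\breve\Sigma$ is $\{A_1,C_n,G_2\}$, i.e.\ $(G^\vee)^I$ of type $A_1$, $B_n$, or $G_2$ (cf.\ Remark~\ref{zero_wt_rmk}: the zero-weight multiplicity equals the number of short nodes of $(G^\vee)^I$).
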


Note that $\on{PU}_4$ is isomorphic to the non-split $\on{PSO}_6$, and therefore the quasi-minuscule Schubert variety for $\on{PU}_4$ is rationally smooth as well. 


For the formulation of our next result, we introduce the following notion. The triple $(G,\bar{\mu},x)$ is called of {\em exotic smoothness} if $G\simeq \on{PU}_{2n+1}$ for some $n\geq 1$,  the element $\bar{\mu}\in X_*(T)_I^+\bslash \{0\}$ is quasi-minuscule, and $x$ corresponds up to $G(F)$-conjugation to an {\em almost  modular} lattice,  i.e., the lattice times a uniformizer is contained in  the dual of the lattice, with colength $1$. See \S\ref{Abs_Special_Sec} below for a more conceptual interpretation of the last condition in terms of the Bruhat-Tits building (in the terminology of $\S\ref{Abs_Special_Sec}$, the condition on $x$ above amounts to requiring that $x$ is special but not absolutely special).

The following result verifies a conjectural classification which Rapoport postulated in conversations with the second named author in 2010. 

\begin{thm}\label{sm_thm} The normalization $\tGr_{G,x}^{\leq \bar{\mu}}$ is smooth if and only if either $\bar{\mu}$ is minuscule or the triple $(G,\bar{\mu},x)$ is of exotic smoothness.
\end{thm}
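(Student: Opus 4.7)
The proof proceeds by leveraging Theorem \ref{ratl_sm_thm} to reduce to a finite list of candidate triples $(G,\bar{\mu},x)$, and then checking case-by-case whether rational smoothness upgrades to genuine smoothness. Since every smooth scheme is rationally smooth, it suffices to restrict attention to the Schubert varieties listed in that theorem.

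For the sufficiency direction, two subcases arise. If $\bar{\mu}$ is minuscule, then $\Gr_{G,x}^{\leq \bar{\mu}} = \Gr_{G,x}^{\bar{\mu}}$ is a single $L^+\calG_x$-orbit, hence already smooth (and normal), so no normalization is needed. If instead $(G,\bar{\mu},x)$ is of exotic smoothness, I would argue along the lines of \cite[Prop.\,4.16]{Arz09}: the almost-modular lattice $\Lambda_x$ provides an explicit moduli description of $\tGr_{G,x}^{\leq \bar{\mu}}$ in terms of lattices refining $\Lambda_x$ subject to a hermitian duality condition, and the almost-modularity (colength one in the dual times a uniformizer) decouples the form on the residue space just enough so that the resulting lattice variety can be identified with an explicit smooth projective variety, e.g.\ a smooth quadric or a hermitian Grassmannian bundle. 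The same model then yields both smoothness and normality.

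For the necessity direction, I would rule out every remaining case of Theorem \ref{ratl_sm_thm} by producing a singular point of $\tGr_{G,x}^{\leq \bar{\mu}}$. The natural candidate is the base point $e\in \Gr_{G,x}(k)$: in the quasi-minuscule cases it is the unique boundary point, and in the non-quasi-minuscule cases appearing on the list (such as $\on{PGL}_n$ with $\bar{\mu}=l\om_i$, $\on{SO}_7$ with $\bar{\mu}=\om_3$, or $\on{PU}_6$ with $\bar{\mu}=\om_3$) it remains the deepest stratum and hence the most promising location of a singularity. To bound the tangent space at $e$, I would use the $T$-equivariant structure: via the identification $X_*(T)_I = X^*((T^\vee)^I)$, the $T$-tangent weights at $e$ are indexed by the $T$-invariant curves through $e$, which in turn correspond to the positive \'echelonnage coroots $\alpha^\vee \in \breve{\Sigma}^\vee_+$ joining $e$ to some other $T$-fixed point of $\Gr_{G,x}^{\leq \bar{\mu}}$ via the corresponding root $\bbP^1$. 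Comparing the resulting count with the variety dimension $\langle 2\rho,\bar{\mu}\rangle$, a direct inspection of each non-minuscule, non-exotic case on the list shows $\dim T_e \tGr_{G,x}^{\leq \bar{\mu}} > \langle 2\rho,\bar{\mu}\rangle$, contradicting smoothness.

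The main obstacle is the internal dichotomy within the odd unitary case: the Schubert varieties attached to $(\on{PU}_{2n+1},\bar{\mu}_{\on{qm}})$ at the absolutely special vertex and at an almost-modular vertex look combinatorially very similar, yet precisely one of them is smooth. The Bruhat--Tits structures at the two vertices differ (the \'echelonnage root system changes type and a multipliable root appears in one case), and this is what alters the set of $T$-invariant curves through $e$ and hence the tangent dimension there. Pinning down this combinatorial shift and verifying that it accounts exactly for the passage from rational smoothness to genuine smoothness in the exotic triples is the technical heart of the argument.
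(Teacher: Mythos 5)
The sufficiency direction of your proposal (minuscule orbits are smooth; the exotic case via \cite[Prop.\,4.16]{Arz09}) matches the paper, modulo the paper's additional care in reducing to a suitable central extension of $G$ so that the local model references apply, and in reducing to $k=\bbC$ via the lifting argument of \S\ref{Reduction_Sec} so that \cite{EM99, MOV05} are applicable.

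The necessity direction, however, has a genuine gap. You propose to show $\dim T_e \tGr_{G,x}^{\leq \bar{\mu}} > \langle 2\rho,\bar{\mu}\rangle$ by counting $T$-invariant curves through the base point, claiming these index the $T$-tangent weights at $e$. This is exactly the step that fails. The $T$-invariant curves through $e$ give a lower bound on $\dim T_e$, not an equality. The Carrell--Peterson criterion (equivalently, Kumar's rational smoothness criterion \cite[Thm.\,5.5(a)]{Ku96}, which in the twisted Kac--Moody setting is available after reducing to $\bbC$) says that the number of $T$-invariant curves through $e$ inside $\Gr_{G,x}^{\leq\bar\mu}$ equals $\dim \Gr_{G,x}^{\leq\bar\mu}$ \emph{exactly when} $e$ is a rationally smooth point. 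But every candidate triple on the Theorem \ref{ratl_sm_thm} list is rationally smooth by construction, so the curve count equals $\langle 2\rho,\bar\mu\rangle$ in every case you are trying to rule out. Your proposed inequality is therefore never strict, and your method cannot see the difference between rational smoothness and genuine smoothness at $e$. That distinction requires a finer invariant — this is precisely why the paper invokes Kumar's \emph{smoothness} criterion (the nil-Hecke computation of $e_vX(w)$ in \S\ref{sec_unitary}, which detects the extra constant $4l$ vs.\ $l$) for the unitary $A_1$ case, an explicit nilpotent-orbit model in \S\ref{Triality_Sec} to compute the full $7$-dimensional tangent space for the ramified triality, the Levi lemma of \cite[3.4]{MOV05} to peel off split cases, and local model equations from \cite{P00, PR09, HPR} for the remaining classical non-split types. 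A side remark: the \'echelonnage root system $\breve\Sigma$ is attached to $G$ and does not change with the choice of special vertex $x$; what changes is which affine root groups lie in $\calG_x(\calO_F)$ (compare Cases A and B in \S\ref{Cases_sec}), so your description of the odd-unitary dichotomy also needs adjustment.
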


We note that Schubert varieties are normal if $\on{char}(k)\nmid |\pi_1(G)|$ by \cite[Thm.~6.1]{PR08}, e.g., if the characteristic of $k$ is zero or sufficiently large.
However, there are non-normal Schubert varieties in general, e.g., the Schubert variety for $G=\PGL_2$ and quasi-minuscule $\bar\mu$ is non-normal if $\on{char}(k)=2$, cf.~\cite{HLR}.


If $G$ is non-split, the only pairs with minuscule coweights are $(\on{PU}_{2n},\omega_1)$ and $(\on{PSO}_{2n+2},\om_n)$, cf.~Remark \ref{connected_rmk}. These relate to the cases 1) and 3) of local models of exotic good reduction above. The remaining case 2) corresponds to the case of exotic smoothness. 

Our approach to the classification is as follows. We first classify all rationally smooth Schubert varieties, and for this the nature of $x$ is unimportant. We prove that ${\rm Gr}_{G, x}^{\leq \bar{\mu}}$ is rationally smooth if and only if the representation $V_{\bar{\mu}}$ of $(G^\vee)^I$ is weight-multiplicity-free (cf.\,Proposition \ref{ratl_sm_prop}).  For this, we use the ramified geometric Satake correspondence \cite{Zhu15, Ri16}.  Next, we use Howe's classification of all weight-multiplicity-free representations of simple simply connected groups (Theorem \ref{mult_thm}). Together with our list of all possibilities for the reductive groups $(G^\vee)^I$ for $G$ adjoint and absolutely simple (Lemma \ref{dual_list}), we are able to establish the list in Theorem \ref{ratl_sm_thm} of all such pairs $(G, \bar{\mu})$ such that ${\rm Gr}_{G, x}^{\leq \bar{\mu}}$ is rationally smooth, cf.~\S\ref{Proof_Ratl_Sm_Sec}.

Since the normalization of Schubert varieties is a finite, birational, universal homeomorphism by \cite[Prop.~3.1]{HRc}, the cohomological characterization  of rational smoothness (Proposition \ref{Hansen}) shows that the variety $\Gr_{G,x}^{\leq \bar{\mu}}$ is rationally smooth if and only if its normalization ${\tGr}_{G,x}^{\leq \bar{\mu}}$ is rationally smooth. 
In particular, we obtain the same list of rationally smooth {\em normalized Schubert varieties}.
The remaining work is to determine which ${\tGr}_{G,x}^{\leq \bar{\mu}}$ on this list are smooth. The proof is given in \S\ref{Proof_Sm_Sec} below. For split groups, we only rely on the case of $\PGL_2$, the Levi Lemma and the quasi-minuscule cases of \cite{MOV05}, and hence we do not rely on computer aided calculations. For the non-split case, we rely on a few calculations for classical groups from \cite{P00, Arz09, PR09, HPR}. 
Here we use that $\tGr_{G,x}^{\leq \bar{\mu}}$ is isomorphic to a Schubert variety for a suitable central extension $\tilde G\to G$ in order to apply these results.

The most difficult case in our proof is the quasi-minuscule Schubert variety for the {\em ramified triality}, i.e., the non-split form of ${\rm Spin}_8$. This case is studied in $\S\ref{Triality_Sec}$, and it is also used by \cite[Thm 1.2]{HPR} to rule out the possibility of additional cases of exotic good reduction. The ramified triality plays a special role, in that it is not amenable to the methods in \cite{HPR}.

Let us remark that the results in the split and ${\rm char}(k)=0$ context \cite{EM99, MOV05} are stronger: the smooth locus of $\Gr_{G,x}^{\leq\bar{\mu}}$ is exactly the open stratum $\Gr_{G,x}^{\bar{\mu}}$. Due to the phenomenon of exotic smoothness, this fails in the non-split case. In \S\ref{Conjecture_Sec}, we formulate a conjecture which describes the precise conditions on $x$ needed to ensure that this description of the smooth locus holds.

In light of Theorem \ref{sm_thm}, in order to give a classification of smooth Schubert varieties, it suffices to understand which $\Gr^{\leq \bar{\mu}}_{G,x}$ are normal.  We plan to address this question in \cite{HLR}.

\medskip
\noindent\textbf{Acknowledgements.} It is a pleasure to thank Michael Rapoport for his steady encouragement and interest in our work, and for all he has taught us over the years. We also thank him for his detailed comments on the article. We thank Johannes Ansch\"utz, Xuhua He, George Pappas, and Brian Smithling for interesting discussions around the subject of this article, Jeff Adams for his help with LiE, and Mark Reeder for pointing us to the reference \cite{BZ90}. Finally, we express our gratitude to David Hansen for his very helpful suggestions and remarks. {

\section{Rational smoothness of Schubert varieties}\label{Ratl_Smooth_Sec}
Let $k$ be an algebraically closed field, and let $F=k\rpot{t}$ denote the Laurent series field. Let $G$ be a connected reductive group over $F$ which splits over a tamely ramified Galois extension $F'/F$. Denote $I=\Gal(F'/F)$. Let $x\in \scrB(G,F)$ be a special vertex in the Bruhat-Tits building, and denote by $\Gr_{G,x}:=LG/L^+\calG_x$ the twisted affine Grassmannian in the sense of \cite{PR08}. Let $S\subset G$ be a maximal $F$-split torus such that $x$ belongs to the apartment $\scrA(G,S,F)$, cf.~\cite[Thm 7.4.18 (i)]{BT72}. The centralizer $T=Z_G(S)$ is a maximal torus defined over $F$ (because by Steinberg's theorem $G$ is quasi-split). Let $B\subset G$ be a Borel subgroup containing $T$, and defined over $F$.

We equip the coinvariants $X_*(T)_I$ with the dominance order $\leq$ with respect to the \'echelonnage root system $\breve{\Sig}$, cf.~\cite{Hai18}. We denote by $X_*(T)_I^+\subset X_*(T)_I$ the submonoid of dominant elements. 
(One can show that $X_*(T) \to X_*(T)_I$ induces a {\em surjective} map of monoids $X_*(T)^+ \to X_*(T)_I^+$.) 
For each $\bar{\mu}\in X_*(T)_I^+$, we have the Schubert variety $\Gr_{G,x}^{\leq\bar{\mu}}\subset \Gr_{G,x}$, and the open orbit embedding $j_{\bar{\mu}}\co \Gr_{G,x}^{\bar{\mu}}\hookrightarrow \Gr_{G,x}^{\leq\bar{\mu}}$. For $\bar{\mu}, \bar{\la}\in X_*(T)_I^+$, we have $\Gr_{G,x}^{\bar{\la}}\subset \Gr_{G,x}^{\leq\bar{\mu}}$ if and only if $\bar{\la}\leq \bar{\mu}$ in the dominance order, cf.~\cite[Cor 1.8, Prop 2.8]{Ri13}. 

Since the $I$-action preserves a pinning of $G^\vee$, the group $(G^\vee)^I$ is a possibly disconnected reductive $\algQl$-group, cf.~\cite[Prop 4.1(a)]{Hai15}. There is a unique (up to isomorphism) irreducible representation $V_{\bar{\mu}}$ of $(G^\vee)^I$ with highest $(T^\vee)^I$-weight $\bar{\mu}$, cf.\,\cite[Lem.\,4.10]{Zhu15}, \cite[Cor.\,A.9]{Ri16}, \cite[$\S5.2$]{Hai18}. For each $\bar{\la}\leq \bar{\mu}$, we denote by $d_{\bar{\mu}}(\bar{\la})$ the dimension of the $\bar{\la}$-weight space $V_{\bar{\mu}}(\bar{\la})$.

Fix a prime number $\ell$ coprime to $\on{char}(k)$. Denote $d_{\bar{\mu}}=\dim(\Gr_{G,x}^{\leq \bar{\mu}})$. The intersection complex $\IC_{\bar{\mu}} =j_{\bar{\mu},!*}\algQl[d_{\bar{\mu}}]$ corresponds under the ramified geometric Satake isomorphism \cite{Zhu15, Ri16} to the irreducible $\algQl$-representation $V_{\bar{\mu}}$ of $(G^\vee)^I$. 

Recall that an irreducible variety $Y$ of dimension $d$ over $k$ is called {\em $\ell$-rationally smooth} if for every point $y\in Y(k)$ with closed immersion $i_y\co \Spec(k) \hookrightarrow Y$, there is an isomorphism
$$
i^!_y \, \algQl \cong \mathbb \algQl[-2d]
$$
in the derived category $D^b_c(\{y\}, \algQl)$. This notion coincides with the one given in \cite[Def.\,A.1]{KL79} since ${\mathbb H}^n_{y}(Y, \algQl) := {\mathbb H}^n(Y, i_{y,*}i^!_y \algQl) = H^n(i^!_y \, \algQl)$. Further, we say $y \in Y(k)$ is an $\ell$-rationally smooth point of $Y$ if $y$ is contained in an $\ell$-rationally smooth Zariski open subset of $Y$. Therefore by definition the $\ell$-rationally smooth locus is open in $Y$. It is clear that every smooth variety is $\ell$-rationally smooth. But there exist many non-smooth, but $\ell$-rationally smooth varieties.

We use the following characterization of $\ell$-rational smoothness which was explained to us by David Hansen.

Let $p: Y \rightarrow {\rm Spec}(k)$ be the structure morphism, and consider the Verdier dualizing complex $\omega_Y := p^!\algQl$. Denote by $\mathbb D_Y(\mathcal F) = R{\mathcal Hom}_{D^b_c(Y)}(\mathcal F, \omega_Y)$, where $\mathcal F$ belongs to $D^b_c(Y)$, the derived category of bounded constructible $\algQl$-complexes on $Y$. It follows that $\omega_Y = \mathbb D_Y(\bar{\bbQ}_{\ell})$.  

\begin{prop} \textup{(Hansen)} \label{Hansen}
The following statements are equivalent:
\begin{enumerate}
\item[i)] $Y$ is $\ell$-rationally smooth;
\item[ii)] $\omega_Y \simeq \bar{\bbQ}_{\ell}[2d]$;
\item[iii)] ${\rm IC}_Y \simeq \bar{\bbQ}_{\ell}[d]$. 
\end{enumerate}
\end{prop}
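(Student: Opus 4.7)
The plan is to close the cycle (ii) $\Rightarrow$ (i) $\Rightarrow$ (iii) $\Rightarrow$ (ii). Throughout, let $j \co U \hookrightarrow Y$ denote the open immersion of the smooth locus, which is dense because $Y$ is irreducible, and write $i_Z \co Z := Y \setminus U \hookrightarrow Y$ for the complement, of dimension at most $d-1$.

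The two easy implications reduce to Verdier duality. For (ii) $\Rightarrow$ (i), the adjunction identity $i_y^! \mathbb D_Y = \mathbb D_{\{y\}} \, i_y^*$ gives
$$
i_y^! \algQl \,\simeq\, \mathbb D_{\{y\}}(i_y^* \omega_Y) \,\simeq\, \mathbb D_{\{y\}}(\algQl[2d]) \,\simeq\, \algQl[-2d],
$$
upon substituting $\omega_Y \simeq \algQl[2d]$. For (iii) $\Rightarrow$ (ii), the canonical Verdier self-duality $\mathbb D_Y(\IC_Y) \simeq \IC_Y$ applied to $\IC_Y \simeq \algQl[d]$ yields $\omega_Y[-d] \simeq \algQl[d]$, hence $\omega_Y \simeq \algQl[2d]$.

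The main step is (i) $\Rightarrow$ (iii). I verify that $\algQl[d]$ satisfies the characterization of $\IC_Y = j_{!*}(\algQl[d])$ as the unique perverse extension of $\algQl[d]|_U$ satisfying $i_Z^* A \in {}^p D^{\leq -1}(Z)$ and $i_Z^! A \in {}^p D^{\geq 1}(Z)$. First, $\algQl[d]$ is perverse on $Y$: the support half is immediate since $\mathcal H^{-d}(\algQl[d]) = \algQl_Y$ has dimension exactly $d$, and the cosupport half follows from (i), which forces $i_y^*\omega_Y \simeq \algQl[2d]$ at every geometric point and hence forces the cohomology sheaves of $\omega_Y$ to be concentrated in degree $-2d$, placing $\omega_Y[-d] = \mathbb D_Y(\algQl[d])$ in ${}^p D^{\leq 0}(Y)$. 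For the first IC condition, $i_Z^* \algQl[d] = \algQl_Z[d]$ has its only cohomology sheaf $\algQl_Z$ supported in dimension $\leq d-1$, which lies in ${}^p D^{\leq -1}(Z)$. For the second IC condition, Verdier duality on $Z$ translates it into $i_Z^*\omega_Y[-d] \in {}^p D^{\leq -1}(Z)$; the stalk computation above shows this complex is again concentrated in degree $-d$ with support in $Z$ of dimension $\leq d-1$. Hence $\algQl[d]$ meets all four conditions and must coincide with $\IC_Y$.

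The main obstacle is carefully tracking the Verdier-duality swaps between $i_y^*$ and $i_y^!$, and between the two perverse half-spaces, while keeping the support-dimension inequalities tight. No ingredient beyond the definition of $\ell$-rational smoothness, the formalism of the six functors, and the defining properties of the perverse $t$-structure on $Y$ and $Z$ is needed.
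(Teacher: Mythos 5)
Your proposal is correct, but the nontrivial implication is handled by a genuinely different argument than the one in the paper. You close the cycle with the two easy duality computations (these match the paper's ``straightforward'' directions iii)\,$\Rightarrow$\,ii)\,$\Rightarrow$\,i)) and then prove i)\,$\Rightarrow$\,iii) directly, by checking that $\algQl[d]$ satisfies the support/cosupport characterization of the intermediate extension $j_{!*}(\algQl[d]|_U)$: perversity of $\algQl[d]$ via the stalkwise identity $i_y^*\omega_Y\simeq\algQl[2d]$ forced by i), plus $i_Z^*\algQl[d]\in{}^pD^{\leq-1}(Z)$ and, dually, $i_Z^*\omega_Y[-d]\in{}^pD^{\leq-1}(Z)$ from $\dim Z\leq d-1$. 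The paper instead argues i)\,$\Rightarrow$\,ii)\,$\Rightarrow$\,iii): first it shows $\omega_Y$ is concentrated in degree $-2d$ with one-dimensional stalks and upgrades this to $\omega_Y\simeq\algQl[2d]$ by producing a nonzero global map $\algQl[2d]\to\omega_Y$ from $\Hom(\algQl[2d],\omega_Y)=\bbH^{-2d}(Y,\omega_Y)\cong\bbH^{2d}_c(Y,\algQl)^\vee\cong\algQl$; then it deduces iii) by splitting $\algQl[d]$ off as a direct summand of the \emph{simple} perverse sheaf $\IC_Y$ via maps $\algQl[d]\to\IC_Y\to\omega_Y[-d]$ that are isomorphisms on a dense open. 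Your route is purely local — stalk computations plus the dimension bound on $Z$ — and avoids both the global cohomological input ($\bbH^{2d}_c(Y,\algQl)\simeq\algQl$ and the accompanying footnote-type estimates) and the simplicity of $\IC_Y$, at the price of invoking the uniqueness characterization of $j_{!*}$ from BBD; the paper's route is shorter on the page and makes visible exactly where the one-dimensionality of the relevant Hom spaces enters. One cosmetic point: the density of the smooth locus uses that $Y$ is a (reduced) variety over the perfect field $k$, not irreducibility alone; alternatively, take any smooth dense open $U$, since $j_{!*}$ is insensitive to this choice.
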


\begin{proof}
The implications iii) $\Rightarrow $ ii) $\Rightarrow$ i) are straightforward. We abbreviate by writing $A := \bar{\bbQ}_{\ell}$. For ${\rm i)} \Rightarrow  {\rm ii)}$, using $\mathbb D_Y(A) = \omega_Y$ we note that for any closed point $y$, the stalk $i_y^!A$ is the dual of $i_y^*\omega_Y$. Thus by i), the complex $\omega_Y$ is concentrated in degree $-2d$ and $(H^{-2d}(\omega_Y))_y \simeq A$. This forces  any choice of non-zero map $A[2d] \to \omega_Y$ to be an isomorphism. Note that such non-zero maps exist because ${\rm Hom}_{D^b_c(Y)}(A[2d], \omega_Y) = {\mathbb H}^{-2d}(Y,\omega_Y)$ is dual to ${\bbH}^{2d}_c(Y, A) \simeq A$.

For ${\rm ii)} \Rightarrow {\rm iii)}$, we can choose maps $A[d] \to \IC_Y \to \omega_Y[-d]$ which are isomorphisms on a dense open subset (choose any non-zero map $A[d] \to \IC_Y$ using ${\rm Hom}_{D^b_c(Y)} (A[d], \IC_Y) = \bbH^{-d}(Y, \IC_Y)$ is dual to $\bbH^d_c(Y, \IC_Y) \simeq A$\footnote{For any non-empty open subset $U\subset Y$ with closed complement $Z \subset Y$, the natural map $\bbH^d_c(U,\IC_Y|_U)\to \bbH^d_c(Y,\IC_Y)$ is an isomorphism. This follows from $\IC_Y|_Z \in \,^pD^{\leq -1}(Z)$ and the estimate of middle-perverse cohomological amplitude $p_! :\, \leq d-1$ for $p: Z \to \Spec(k)$ (see \cite[4.2.4]{BBD82}).
We apply this to any non-empty open subset $U\subset Y$ with $\IC_Y|_U=A[d]$.}, and choose $\IC_Y \to \omega_Y[-d]$ by taking the Verdier dual of the first map).  Now ii) guarantees that $A[d] \cong \omega_Y[-d]$ is a perverse sheaf, and the aforementioned maps split $A[d]$ off as a direct summand of $\IC_Y$.  Since $\IC_Y$ is a simple perverse sheaf, this implies iii).
\end{proof}

The following proposition is proved using the ramified geometric Satake correspondence \cite{Zhu15, Ri16} as well as elaborations on it such as \cite[Thm 5.1]{Zhu15}. 

Recall that the ramified geometric Satake equivalence provides an equivalence of Tannakian categories
\[
\on{Perv}_{L^+\calG}(\Gr_{G,x})\;\simeq\; \Rep_{\algQl}\big((G^\vee)^I\big),
\]
under which the intersection complex $\IC_{\bar\mu}$ corresponds to the representation $V_{\bar\mu}$. 
The left hand side is the category of $L^+\calG$-equivariant perverse sheaves on $\Gr_{G,x}$ equipped with the tensor structure given by the convolution product and the fibre functor given by global cohomology.
The $\ell$-rational smoothness of $\Gr_{G,x}^{\leq \bar{\mu}}$ is related to the structure of the $(G^\vee)^I$-representation $V_{\bar\mu}$ as follows.

\begin{prop}\label{ratl_sm_prop} The following are equivalent:\smallskip\\
i\textup{)} The Schubert variety $\Gr_{G,x}^{\leq \bar{\mu}}$ is $\ell$-rationally smooth. \smallskip\\
ii\textup{)} The intersection complex $\IC_{\bar{\mu}}$ is isomorphic to the constant sheaf $\algQl[d_{\bar{\mu}}]$. \smallskip\\
iii\textup{)} One has $d_{\bar{\mu}}(\bar{\la})=1$ for all $\bar{\la}\in X_*(T)_I^+, \bar{\la}\leq \bar{\mu}$.\smallskip\\
In particular, the rational smoothness of $\Gr_{G,x}^{\leq \bar{\mu}}$ is independent of the choice of $\ell$ and of the choice of special vertex $x$. Thus, we replace ``$\ell$-rationally smooth'' from now on by ``rationally smooth'' for Schubert varieties.
\end{prop}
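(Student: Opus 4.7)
The plan is to prove the equivalences via the cycle i) $\Leftrightarrow$ ii) $\Leftrightarrow$ iii); the independence of $\ell$ and of the special vertex $x$ asserted at the end then drops out automatically, since iii) is a purely combinatorial condition on $(\bar\mu,\breve\Sigma^\vee)$ depending only on $G$.

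The equivalence i) $\Leftrightarrow$ ii) is an immediate application of Proposition \ref{Hansen} to the irreducible projective variety $Y := \Gr_{G,x}^{\leq\bar\mu}$ of dimension $d_{\bar\mu}$, using that by construction $\IC_Y = j_{\bar\mu,!*}\algQl[d_{\bar\mu}] = \IC_{\bar\mu}$, so $Y$ is $\ell$-rationally smooth if and only if $\IC_{\bar\mu}\cong\algQl[d_{\bar\mu}]$.

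The heart of the argument is ii) $\Leftrightarrow$ iii), which uses the ramified geometric Satake correspondence together with its weight-functor refinement \cite[Thm.\,5.1]{Zhu15}. That theorem identifies, via hyperbolic localization along a generic cocharacter into $T$, the $(T^\vee)^I$-weight space
\[
V_{\bar\mu}(\bar\la)\;\cong\;\bbH^{\langle 2\rho,\bar\la\rangle}_c\bigl(S_{\bar\la}\cap\Gr_{G,x}^{\leq\bar\mu},\,\IC_{\bar\mu}\bigr),
\]
where $S_{\bar\la}$ is the semi-infinite orbit through $t^{\bar\la}$ and $\rho$ is the half-sum of positive \'echelonnage roots; crucially, this cohomology is concentrated in the single indicated degree. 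For ii) $\Rightarrow$ iii): substituting $\IC_{\bar\mu}=\algQl[d_{\bar\mu}]$ reduces the right-hand side to top compactly supported cohomology of a constant sheaf, and only the open part $S_{\bar\la}\cap\Gr_{G,x}^{\bar\la}$ contributes in top degree; the ramified Iwasawa decomposition realizes it as a single affine cell, so $d_{\bar\mu}(\bar\la)=1$. For iii) $\Rightarrow$ ii): reading the same identification in reverse, if every weight multiplicity equals one then the stalks of $\IC_{\bar\mu}$ at each $T$-fixed point $t^{\bar\la}\in\Gr_{G,x}^{\bar\la}$ must be one-dimensional and concentrated in a single cohomological degree; combined with $L^+\calG_x$-equivariance and the perverse simplicity of $\IC_{\bar\mu}$, this pins down $\IC_{\bar\mu}\cong\algQl[d_{\bar\mu}]$ on all of $\Gr_{G,x}^{\leq\bar\mu}$.

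The principal obstacle I expect is the implication iii) $\Rightarrow$ ii), where a purely numerical condition on weight multiplicities has to be promoted to an isomorphism of perverse sheaves. This upgrade is available precisely because the weight-functor isomorphism of \cite[Thm.\,5.1]{Zhu15} is an on-the-nose identification in a single cohomological degree (not merely at the level of Euler characteristics), so that the minimality $d_{\bar\mu}(\bar\la)=1$ rigidly determines the entire stalk structure of $\IC_{\bar\mu}$.
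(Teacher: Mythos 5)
Your handling of i) $\Leftrightarrow$ ii) via Proposition \ref{Hansen} is exactly what the paper does, and the deduction of $\ell$- and $x$-independence from iii) is fine. The gap is in ii) $\Leftrightarrow$ iii), and it stems from a misreading of the input from \cite{Zhu15}: the theorem the paper invokes (\cite[Thm.\,5.1]{Zhu15}, used together with \cite[Prop.\,5.4]{Zhu15}) is a $q$-analogue formula for the \emph{stalks} $i_{\bar{\la}}^*\IC_{\bar{\mu}}$ at the orbit base points $x_{\bar{\la}}$, whose specialization at $q=1$ is $d_{\bar{\mu}}(\bar{\la})$; it is not the semi-infinite-orbit weight functor you describe. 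This matters concretely in your ii) $\Rightarrow$ iii) step: with $\IC_{\bar{\mu}}=\algQl[d_{\bar{\mu}}]$, the group $\bbH_c^{\langle 2\rho,\bar{\la}\rangle+d_{\bar{\mu}}}(S_{\bar{\la}}\cap\Gr_{G,x}^{\leq\bar{\mu}},\algQl)$ is the top-degree compactly supported cohomology of a variety of dimension $\langle\rho,\bar{\la}+\bar{\mu}\rangle$, and the piece contributing in that degree is $S_{\bar{\la}}\cap\Gr_{G,x}^{\bar{\mu}}$ (the intersection with the \emph{open} stratum), not $S_{\bar{\la}}\cap\Gr_{G,x}^{\bar{\la}}$, which has the strictly smaller dimension $\langle 2\rho,\bar{\la}\rangle$. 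The number of top-dimensional components of $S_{\bar{\la}}\cap\Gr_{G,x}^{\leq\bar{\mu}}$ is, unconditionally, the number of MV-type cycles, i.e.\ $d_{\bar{\mu}}(\bar{\la})$ itself; your argument never uses hypothesis ii), and if it were valid it would prove $d_{\bar{\mu}}(\bar{\la})=1$ for every $\bar{\mu}$, which already fails for the quasi-minuscule Schubert variety of split $\PGL_3$, where $d_{\bar{\mu}}(0)=2$. The correct mechanism is the paper's: constancy of $\IC_{\bar{\mu}}$ forces each stalk to be one-dimensional, and the stalk formula at $q=1$ identifies the total stalk dimension at $x_{\bar{\la}}$ with $d_{\bar{\mu}}(\bar{\la})$.

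The same conflation undermines your iii) $\Rightarrow$ ii): multiplicity-one weight spaces, read through the weight functor, say nothing directly about stalks, since $\bbH_c$ of an attracting set is not a stalk; you need the stalk formula plus the degree concentration of \cite[Prop.\,5.4]{Zhu15} to get $i_{\bar{\la}}^*\IC_{\bar{\mu}}=\algQl[d_{\bar{\mu}}]$ and, by duality, $i_{\bar{\la}}^!\IC_{\bar{\mu}}=\algQl[-d_{\bar{\mu}}]$. Even granting that, ``equivariance plus perverse simplicity pins down $\IC_{\bar{\mu}}\cong\algQl[d_{\bar{\mu}}]$'' is not yet a proof: a constructible sheaf with one-dimensional stalks everywhere need not be constant (consider $j_!\algQl\oplus i_*\algQl$ on $\bbA^1$). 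The paper closes this by exhibiting a global map $\iota_v\colon\algQl[d_{\bar{\mu}}]\to\IC_{\bar{\mu}}$ attached to a nonzero vector of the one-dimensional space $\bbH^{-d_{\bar{\mu}}}(\IC_{\bar{\mu}})$ (the lowest-weight line of $V_{\bar{\mu}}$ under Satake), noting it is an isomorphism on a dense open subset, hence injective as a map of sheaves, hence an isomorphism because both sides have one-dimensional stalks at all closed points. Some such construction of a global comparison map is needed to complete your sketch.
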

\begin{proof} 
Denote by $\calG=\calG_x$ the special parahoric group scheme. Write $S_{\bar{\mu}}:={\rm Gr}^{\leq \bar{\mu}}_{G,x}$, and $C_{\bar{\la}}:=\Gr_{G,x}^{\bar{\la}}$ for any $\bar{\la}\in X_*(T)_I^+, \bar{\la}\leq \bar{\mu}$. Let $i_{\bar{\la}}\co \{x_{\bar{\la}}\} \hookrightarrow S_{\bar{\mu}}$ be the closed immersion of the base point $x_{\bar{\la}}\in C_{\bar{\la}}(k)$ corresponding to $\bar{\la}$.\smallskip\\
i)$\Rightarrow$ ii): This is a special case of Proposition \ref{Hansen}.\\  
\iffalse We can also argue without that proposition, as follows.} Assume that $S_{\bar{\mu}}$ is rationally smooth. Since $k$ is algebraically closed, the points $S_{\bar{\mu}}(k)\subset S_{\bar{\mu}}$ are everywhere dense. It follows from the definition of the selfdual perverse $t$-structure on $D_c^b(S_{\bar{\mu}},\algQl)$ (cf.~\cite[\S III.1]{KW01}) that the constant sheaf $\algQl[d_{\bar{\mu}}]$ supported on $S_{\bar{\mu}}$ is perverse (because the dual of $\algQl[d_{\bar{\mu}}]$ is concentrated in degree $d_{\bar{\mu}}$). As it is $L^+\calG$-equivariant as well, it is an object in the category of $L^+\calG$-equivariant perverse sheaves on ${\rm Gr}_{G,x}$. By \cite[Lem 1.1]{Zhu15}, \cite[Thm 4.2 (iii)]{Ri16}, this category is semi-simple, and hence $\IC_{\bar{\mu}}$ is a direct summand of $\algQl[d_{\bar{\mu}}]$. Comparing the cohomology stalks implies $\IC_{\bar{\mu}}=\algQl[d_{\bar{\mu}}]$, and proves ii). Namely, write $\algQl[d_{\bar{\mu}}] = \IC_{\bar{\mu}} \oplus \mathcal E$, where $\mathcal E$ is another $L^+\calG$-equivariant perverse sheaf on $\Gr_{G,x}$. Restriction to $C_{\bar{\mu}}$ shows that $\mathcal E$ is supported on a union of sets $C_{\bar{\lambda}}$ for $\lambda \in X_*(T)_I^+$ and $\bar{\lambda} < \bar{\mu}$, hence being a sum of sheaves $\IC_{\bar{\lambda}}$ with $\bar{\lambda} < \bar{\mu}$, it lives in cohomological degrees greater than $-d_{\bar{\mu}}$. But also $H^i\mathcal E = 0$ for all $i > -d_{\bar{\mu}}$, hence $\mathcal E = 0$. \fi
ii)$\Rightarrow$iii): Assume that $\IC_{\bar{\mu}}=\algQl[d_{\bar{\mu}}]$. By setting $q=1$ in \cite[Thm 5.1]{Zhu15}, we see that the dimension of the total cohomology of $i^*_{{\bar{\lambda}}}\IC_{\bar{\mu}}=\algQl[d_{\bar{\mu}}]$ is equal to $d_{\bar{\mu}}(\bar{\la})$ for all $\bar{\la}\in X_*(T)_I^+, \bar{\la}\leq \bar{\mu}$. This is equal to $1$, which proves iii).\smallskip\\
iii)$\Rightarrow$i): Assume that $d_{\bar{\mu}}(\bar{\lambda}) = 1$ for all $\bar{\lambda} \in X_*(T)_I^+$ with $\bar{\lambda} \leq \bar{\mu}$. By \cite[Thm.\,5.1, Prop.\,5.4]{Zhu15}, we have
\begin{equation} \label{Zhu_5.1}
i_{\bar{\la}}^*~{\rm IC}_{\bar{\mu}} = \algQl[d_{\bar{\mu}}] \hspace{.5in} i_{\bar{\la}}^! ~{\rm IC}_{\bar{\mu}} = \algQl[-d_{\bar{\mu}}].
\end{equation}
Indeed, \cite[Thm,\,5.1]{Zhu15} implies that $\dim(H^*(i_{\bar{\lambda}}^*~{\rm IC}_{\bar{\mu}}))=1$, and then in conjunction with \cite[Prop.\,5.4]{Zhu15} we see that $\dim(H^{-d_{\bar{\mu}}}(i_{\bar{\lambda}}^*~{\rm IC}_{\bar{\mu}}))=1$, which yields the first formula. The second formula follows from the first by applying Verdier duality.

Since any point in $S_{\bar{\mu}}(k)$ lies in the $L^+\calG$-orbit of some base point $x_{\bar{\lambda}} \in C_{\bar{\lambda}}(k)$, this implies that $\IC_{\bar{\mu}}=\calF[d_{\bar{\mu}}]$ where $\calF:=H^{-d_{\bar{\mu}}}(\IC_{\bar{\mu}})$ is an $L^+\calG$-equivariant constructible $\algQl$-sheaf. Here we are using the principle that if a complex $K \in D^b_c(S_{\bar{\mu}},\algQl)$ is cohomologically supported in degree $n \in \mathbb Z$, then $K = H^{n}K[-n]$ in $D^b_c(S_{\bar{\mu}}, \algQl)$. 

Hence, to prove that $S_{\bar{\mu}}$ is rationally smooth it is by \eqref{Zhu_5.1} enough to prove that
\begin{equation}\label{constant}
\IC_{\bar{\mu}} \,=\, \algQl[d_{\bar{\mu}}],
\end{equation}
or equivalently $\calF= \algQl$. Let $D$ denote the derived category $D_c^b(S_{\bar{\mu}}, \algQl)$ and write ${\bbH}(K)$ for the global cohomology of an object $K \in D$. We have
\begin{equation}\label{Derived_Hom}
\Hom_D(\bar{\bbQ}_\ell[d_{\bar{\mu}}], \IC_{\bar{\mu}})\,=\,\bbH^{-d_{\bar{\mu}}}(\IC_{\bar{\mu}})\,=\,\bbH^0(\calF),
\end{equation}
and this vector space corresponds under geometric Satake to the $1$-dimensional lowest weight space of $V_{\bar{\mu}}$. We claim that any vector $v\in \bbH^{-d_{\bar{\mu}}}(\IC_{\bar{\mu}})\bslash \{0\}$ induces an isomorphism 
\begin{equation}\label{IC_Iso}
\iota_v\co \bar{\bbQ}_\ell[d_{\bar{\mu}}]\, \overset{\sim}{\longrightarrow} \,\IC_{\bar{\mu}}=\calF[d_{\bar{\mu}}].
\end{equation}
 The map $\iota_v$ is necessarily an isomorphism on a dense open subset by (\ref{Zhu_5.1}). Hence the kernel of $\iota_v[-d_{\bar{\mu}}]$ is a subsheaf of $\bar{\bbQ}_\ell$ which is supported on a nowhere dense closed subset; any such subsheaf of $\bar{\bbQ}_\ell$ is zero. Therefore $\iota_v$ is an injective morphism of constructible abelian sheaves, which is an isomorphism on the stalks at all closed points, since by (\ref{Zhu_5.1}) $\mathcal F$ has 1-dimensional stalks everywhere. This implies $\iota_v$ is an isomorphism and completes the proof.

\end{proof}

\begin{rmk} \label{ratl_sm_split_rmk}
If $G$ is split, there is a sharper stratum-by-stratum version of Proposition \ref{ratl_sm_prop}.  Suppose $\lambda, \mu \in X_*(T)^+$ satisfy $\lambda \leq \mu$. Then $x_{\lambda}$ belongs to the rationally smooth locus of $S_{\mu}$ if and only if $d_{\mu}(\lambda)=1$. This is well-known, but for completeness we explain the proof here. Let $W$ (resp. $W_x$, resp.\,$\Fl_G$) denote the Iwahori-Weyl group (resp.\,finite Weyl group, resp.\,affine flag variety) for $G$ relative to the special vertex $x$ and an alcove ${\bf a}$ containing $x$ in its closure. Let $w_\lambda \in W$ denote the unique longest element in $W_x \lambda(t) W_x$.  Let $\tilde{x}_\lambda \in \Fl_G$ denote the base point in the Iwahori orbit corresponding to $w_\lambda$. Then as $\pi: \Fl_G \to \Gr_{G,x}$ is represented by a smooth surjective $L^+\calG$-equivariant morphism, $x_{\lambda}$ belongs to the rationally smooth locus of $S_{\mu}$ if and only if $\tilde{x}_\lambda$ belongs to the rationally smooth locus of the Schubert variety $S_{w_\mu} := \pi^{-1}(S_\mu)$ in $\Fl_G$. By \cite[Thm.\,A.2]{KL79}, this is equivalent to the triviality of certain  Kazhdan-Lusztig polynomials, namely $P_{w', w_\mu}(q) = 1$ for all $w' \in W$ with $w_\lambda \leq w' \leq w_\mu$ (in loc.\,cit.\,this is proved for Schubert varieties in the classical flag variety for a split group, but the proof carries over to the affine flag varieties).  This is equivalent to the single equality $P_{w_\lambda, w_\mu}(q)=1$ (e.g.\,\cite[Thm.\,6.2.10]{BL00}, using that all $P_{u,v}(q)$ have non-negative coefficients for $u,v \in W$ by \cite{KL80}).  Since $P_{u,v}(0)=1$ (e.g,\,\cite[Lem.\,6.1.9]{BL00}), the equality is equivalent to $P_{w_\lambda, w_\mu}(1)=1$. Finally, this is equivalent to $d_{\mu}(\lambda) = 1$ by Lusztig's multiplicity formula $P_{w_\lambda, w_\mu}(1) = d_{\mu}(\lambda)$ (\cite[Thm.\,6.1]{Lu83}). (Because Kazhdan-Lusztig polynomials $P_{x,y}(q^{1/2}) \in \mathbb Z[q^{1/2}]$ attached to Hecke algebras with unequal parameters are not known to belong to $\mathbb Z_{\geq 0}[q]$, cf.\,\cite{Lus03}, it is not clear that the same argument can be used to handle quasi-split but non-split groups.) We remark that Berenstein-Zelevinsky \cite{BZ90} have classified, for any connected reductive complex group, all pairs of weights $(\mu, \lambda)$ satisfying $d_{\mu}(\lambda) = 1$. 
\end{rmk}

\section{The classification for reductive groups: passing to adjoint groups}
We proceed with the notation of \S\ref{Ratl_Smooth_Sec}. Let $G\to G_\ad$ be the canonical map to the adjoint group, and denote by $T_\ad\subset B_\ad$ the image of $T\subset B$. The image of the special vertex $x$ under $\scrB(G,F)\to \scrB(G_\ad,F)$ defines a special vertex $x_\ad$. The map $G\to G_\ad$ extends to a map of parahoric $\calO_F$-groups $\calG_x\to \calG_{x_\ad}$. By functoriality of the loop group construction, we obtain a map $LG\to LG_\ad$ (resp.~$L^+\calG_x\to L^+\calG_{x_\ad}$), and hence a map on twisted affine Grassmannians $\Gr_{G,x}\to \Gr_{G_\ad,x_\ad}$. 

Further, $T\to T_\ad$ defines a map $X_*(T)_I\to X_*(T_\ad)_I$ which sends $X_*(T)_I^+$ to $X_*(T_\ad)_I^+$. For $\bar{\mu}\in X_*(T)_I^+$, we denote by $\bar{\mu}_\ad\in X_*(T_\ad)_I^+$ its image. Since the Schubert varieties are defined as the scheme theoretic image of the orbit map, we get a natural morphism of $k$-schemes
\begin{equation}\label{Schubert_ad}
\Gr_{G,x}^{\leq \bar{\mu}}\,\longto\, \Gr_{G_\ad,x_\ad}^{\leq \bar{\mu}_\ad}.
\end{equation}

\begin{prop} \label{Ad_Iso_prop}
The map \eqref{Schubert_ad} is a finite, birational, universal homeomorphism. In particular, it induces an equivalence of \'etale sites and an isomorphism on normalizations.
\end{prop}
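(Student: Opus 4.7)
The proof amounts to showing three things: the map is proper, it restricts to an isomorphism on the open Schubert cell, and it is bijective on $k$-points. Properness is immediate since both source and target are projective $k$-schemes by \cite{PR08}.

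For the open cell, the key observation is that $\Gr_{G,x}^{\bar{\mu}}$ is a single $L^+\calG_x$-orbit, realized as a quotient $L^+\calG_x/\on{Stab}(t^{\bar{\mu}})$, and similarly $\Gr_{G_\ad,x_\ad}^{\bar{\mu}_\ad}=L^+\calG_{x_\ad}/\on{Stab}(t^{\bar{\mu}_\ad})$. The kernel of $L^+\calG_x\to L^+\calG_{x_\ad}$ is central, hence contained in every stabilizer, so the induced map on quotients is an isomorphism. The same argument applies to each stratum $\Gr_{G,x}^{\bar{\lambda}}\to \Gr_{G_\ad,x_\ad}^{\bar{\lambda}_\ad}$ with $\bar{\lambda}\leq \bar{\mu}$, so the map is an isomorphism on every stratum. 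Bijectivity on $k$-points then reduces to showing that the map of index sets $\{\bar{\lambda}\in X_*(T)_I^+:\bar{\lambda}\leq \bar{\mu}\}\to \{\bar{\lambda}_\ad\in X_*(T_\ad)_I^+:\bar{\lambda}_\ad\leq \bar{\mu}_\ad\}$ is a bijection. Surjectivity follows because the scheme-theoretic image of \eqref{Schubert_ad} is closed and contains the open cell, hence equals the entire target. For injectivity, if $\bar{\lambda}_1,\bar{\lambda}_2\leq \bar{\mu}$ have the same image, then $\bar{\lambda}_1-\bar{\lambda}_2$ lies simultaneously in $\ker(X_*(T)_I\to X_*(T_\ad)_I)$ and in the $\bbZ$-span of the \'echelonnage coroots $\breve{\Sigma}^\vee$ (the latter because $\bar{\lambda}_1,\bar{\lambda}_2$ lie in the same connected component of $\Gr_{G,x}$). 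In $X_*(T)$ the central cocharacter sublattice and the coroot lattice intersect trivially: a central element of the coroot lattice pairs to zero with every simple root and hence vanishes by nondegeneracy of the Cartan matrix. This statement must be transported to the $I$-coinvariants.

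From bijectivity on $k$-points plus properness one obtains quasi-finiteness, hence finiteness; the open-cell isomorphism gives birationality; and universal injectivity combined with properness and surjectivity yields the universal homeomorphism property. The main obstacle, in my view, is the injectivity step, specifically the descent of the identity $X_*(Z)\cap \bbZ\breve{\Sigma}^\vee=0$ from $X_*(T)$ to the $I$-coinvariants $X_*(T)_I$, where one must rule out torsion contributions introduced by the coinvariants functor. The ``in particular'' assertions are then formal: a finite surjective universal homeomorphism of Noetherian schemes induces an equivalence of small \'etale sites, and taking normalization is insensitive to such maps.
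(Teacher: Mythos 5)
The paper's own proof is a one-line citation to \cite[Prop.~3.5]{HRc} (together with topological invariance of the small \'etale site), so your attempt is a genuinely different, direct route. The skeleton you chose --- properness of both Schubert varieties, isomorphism on the open cell (giving birationality), and bijectivity on points of all fields (giving universal injectivity, hence together with properness a finite universal homeomorphism) --- is the right shape of argument. Two steps, however, need more care than you give them.

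Your claim that each stratum map $\Gr_{G,x}^{\bar\lambda} \to \Gr_{G_\ad,x_\ad}^{\bar\lambda_\ad}$ is an isomorphism rests on the observation that the central kernel of $L^+\calG_x \to L^+\calG_{x_\ad}$ is contained in every stabilizer. That only controls \emph{injectivity} of the induced map on orbits; it says nothing about surjectivity. And $L^+\calG_x \to L^+\calG_{x_\ad}$ genuinely fails to be an fppf surjection when ${\rm char}(k)$ divides $|\pi_1(G)|$: already $L^+\calG_x$ for $\SL_2$ maps to $L^+\calG_{x_\ad}$ for $\PGL_2$ with nontrivial cokernel in characteristic $2$, the obstruction being $\calO_F^\times/(\calO_F^\times)^2$. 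To repair this, one must show \emph{both} that the image of $L^+\calG_x$ together with $\on{Stab}_{G_\ad}(t^{\bar\lambda_\ad})$ generates $L^+\calG_{x_\ad}$ as an fppf sheaf (e.g.\ because the stabilizer contains the maximal torus, whose Kottwitz image accounts for the cokernel), \emph{and} that the stabilizer in $L^+\calG_x$ is the full preimage of the stabilizer in $L^+\calG_{x_\ad}$ --- and the latter is a statement about the compatibility of Kottwitz homomorphisms for $G$ and $G_\ad$ and the identification of buildings, not a consequence of centrality of the kernel alone. Even after these repairs there is a residual question in bad characteristic of whether one truly gets an isomorphism per stratum or only a universal homeomorphism; the latter would still give you universal injectivity of the Schubert map, but you would then have to separately ensure the open-cell map is birational.

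The injectivity of the index-set map, which you single out as the main obstacle, is in fact the easier part once phrased correctly. Because $T \to T_\ad$ is a surjection of tori with finite kernel, $X_*(T) \to X_*(T_\ad)$ is \emph{injective}; so $\ker\big(X_*(T)_I \to X_*(T_\ad)_I\big)$, which is the image of $H_1\big(I, X_*(T_\ad)/X_*(T)\big)$ under the connecting map in group homology, is entirely torsion. On the other hand $\bbZ\breve{\Sigma}^\vee \to X_*(T)_I$ is injective with torsion-free image (the \'echelonnage coroots are $\bbQ$-linearly independent in $X_*(T)_I\otimes\bbQ$). Hence $\bbZ\breve{\Sigma}^\vee \cap \ker = 0$ directly. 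Note that the slogan you invoke before passing to coinvariants --- that ``the central cocharacter sublattice and the coroot lattice intersect trivially in $X_*(T)$'' --- is not quite the right picture: the map $X_*(T) \to X_*(T_\ad)$ has \emph{no} kernel, and the kernel you must analyze is created only by the coinvariants functor; what saves you is precisely that it is torsion.
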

\begin{proof} 
This is a special case of \cite[Prop.~3.5]{HRc}. The equivalence on \'etale sites follows from their topological invariance \cite[04DY]{StaPro}.
\end{proof}

Now assume that $G=G_\ad$ is adjoint. Then there is a finite index set $J$, and an isomorphism of $F$-groups 
\begin{equation}\label{Ad_Iso}
G=\prod_{j\in J}\Res_{F_j/F}(G_j), 
\end{equation}
where each $F_j/F$ is a finite separable field extension, and $G_j$ is an absolutely simple adjoint $F_j$-group. The condition on $G$ of being tamely ramified implies that each $G_j$ is tamely ramified (and likewise for $F_j/F$, but this is not important as we will see). This induces an identification of buildings $\scrB(G,F)=\prod_{j\in J}\scrB(G_j,F_j)$ compatible with the simplicial structure, cf.~\cite[Prop 4.6]{HRb}. Under this identification we get $x=(x_j)_{j\in J}$ where each vertex $x_j\in \scrB(G_j,F_j)$ is special. 

Further, we can write $T=\prod_{j\in J}\Res_{F_j/F}(T_j)$, and likewise for $B$, cf.~\cite[Lem 4.2]{HRb}. Note that the splitting field $F'$ of $G$ contains each $F_j$, and we define $I_j:=\Gal(F/F_j)$. By Shapiro's Lemma (cf.~\cite[Lem 4.1]{HRb}), we get $X_*(T)_I=\prod_{j\in J}X_*(T_j)_{I_j}$ compatible with $X_*(T)_I^+=\prod_{j\in J}X_*(T_j)_{I_j}^+$. For each $\bar{\mu}\in X_*(T)_I^+$, we denote $\bar{\mu}=(\bar{\mu}_j)_{j\in J}$ with $\bar{\mu}_j\in X_*(T_j)_{I_j}^+$.

\begin{lem}\label{product}
Under \eqref{Ad_Iso} there is an identification of affine Grassmannians
\[
\Gr_{G,x}\,=\,\prod_{j\in J} \Gr_{G_j,x_j},
\]
under which the Schubert varieties \textup{(}resp.~their normalizations\textup{)} for each $\bar{\mu}=(\bar{\mu}_j)_{j\in J}$ correspond to each other.
\end{lem}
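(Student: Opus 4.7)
The plan is to combine two basic compatibilities of the twisted affine Grassmannian construction: compatibility with finite direct products of reductive groups, and compatibility with Weil restriction of scalars along a finite separable extension sharing the residue field $k$. Granted both, the identification of affine Grassmannians follows immediately from the factorization $G=\prod_{j\in J}\Res_{F_j/F}(G_j)$, and the claims about Schubert varieties and their normalizations become book-keeping.

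For the product step I would observe that by the functoriality of Bruhat--Tits theory recalled in \cite[Prop.~4.6, Lem.~4.2]{HRb} the parahoric $\calG_x$ is the direct product of the parahorics attached to the factors; since the functors $LH\co R\mapsto H(R\rpot{t})$ and $L^+\calH\co R\mapsto \calH(R\pot{t})$ commute with finite products, the fppf quotient $LG/L^+\calG_x$ splits as a product of the factor-wise twisted Grassmannians. For the Weil restriction step, fix $j$ and set $F'=F_j$, $\calO'=\calO_{F_j}$, $H=G_j$, $y=x_j$; since $k$ is algebraically closed one may arrange $F'=k\rpot{t'}$ with $t=t'^e$, where $e$ is the ramification index. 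For any $k$-algebra $R$ the identities
\[
R\rpot{t}\otimes_{F}F' \,=\, R\rpot{t'}, \qquad R\pot{t}\otimes_{\calO_F}\calO' \,=\, R\pot{t'}
\]
follow at once from the fact that $\calO'$ is a finite free $\calO_F$-module of rank $e$. Combined with the universal property of Weil restriction, and with the fact that $\Res_{\calO'/\calO_F}(\calH_y)$ is the parahoric model of $\Res_{F'/F}(H)$ attached to $x_j$ (again by functoriality of Bruhat--Tits buildings and parahorics), this identifies $L(\Res_{F'/F}H)$ with $L_{F'}H$ and similarly for the positive loop groups; the quotient then gives $\Gr_{\Res_{F'/F}(H),x_j}=\Gr_{H,y}$ as $k$-ind-schemes.

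It remains to match the stratifications. Under the Shapiro isomorphism $X_*(T)_I=\prod_j X_*(T_j)_{I_j}$ the element $\bar{\mu}$ corresponds to $(\bar{\mu}_j)_j$, and the base point $t^{\bar{\mu}}\in \Gr_{G,x}(k)$ is sent to the tuple of base points $(t_j^{\bar{\mu}_j})_j$. The $L^+\calG_x$-orbit through $t^{\bar{\mu}}$ therefore factors literally as the product of the individual $L^+\calG_{j,x_j}$-orbits, and passing to reduced orbit closures yields $\Gr_{G,x}^{\leq\bar{\mu}}=\prod_j \Gr_{G_j,x_j}^{\leq\bar{\mu}_j}$. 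Since each factor on the right is an integral projective variety over the algebraically closed field $k$ and hence geometrically reduced, normalization commutes with taking products, so the same identification persists after normalizing. The step I expect to require the most care is the Weil restriction compatibility, specifically the assertion that the Weil restriction of a parahoric model is again a parahoric model at the image vertex under the canonical identification of buildings; once this is in hand the rest of the argument is essentially formal.
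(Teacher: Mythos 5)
Your proposal is correct and follows the same two-step reduction the paper uses: first to direct products of $F$-groups, then to Weil restriction along a (necessarily totally ramified) finite separable extension of $F=k\rpot{t}$, with the same key inputs---functoriality of parahorics from \cite{HRb}, the base-change identity $R\pot{t}\otimes_{\calO_F}\calO_{F'}=R\pot{u}$, and the compatibility of products with geometric reducedness/normality. The only small imprecision is in the final step: rather than asserting that ``normalization commutes with products because the factors are geometrically reduced,'' the operative fact (and what the paper invokes via \cite[06DG]{StaPro}) is that a product of geometrically normal $k$-schemes is normal, so $\prod_j\tGr_{G_j,x_j}^{\leq\bar\mu_j}\to\prod_j\Gr_{G_j,x_j}^{\leq\bar\mu_j}$ is finite birational with normal source and hence is the normalization.
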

\begin{proof}
It is enough to treat the following two cases separately.\smallskip\\
\textit{Products:} If $G=G_1\times G_2$ is a direct product of two $F$-groups, then we have $\Gr_{G,x}=\Gr_{G_1,x_1}\times \Gr_{G_2,x_2}$ which is obvious. Also the equality $\Gr_{G,x}^{\leq \bar{\mu}}=\Gr_{G_1,x_1}^{\leq \bar{\mu}_1}\times \Gr_{G_2,x_2}^{\leq\bar{\mu}_2}$ is easy to prove using that the product of (geometrically) reduced $k$-schemes is reduced, cf.~\cite[035Z (2)]{StaPro}.
Likewise, the equality holds on normalizations using that the product of (geometrically) normal $k$-schemes is normal, cf.~\cite[06DG]{StaPro}.\smallskip\\
\textit{Restriction of scalars:} Let $G=\Res_{F'/F}(G')$ where $F'/F$ is a finite separable extension, and $G'$ is an $F'$-group. By \cite[Prop 4.7]{HRb}, we have $\calG_x=\Res_{\calO_{F'}/\calO_F}(\calG'_{x'})$ where we use the identification $\scrB(G,F)=\scrB(G',F')$. Now choose\footnote{The identification of twisted affine Grassmannians is independent of this choice as all loop groups can be defined without choosing uniformizers, cf.~\cite[\S 2]{Ri13}.} a uniformizer $u\in \calO_{F'}$. Since $k$ is algebraically closed, we have $\calO_{F'}=k\pot{u}$ (resp.~$F'=k\rpot{u}$). For any $k$-algebra $R$, we have $R\pot{t}\otimes_{\calO_{F}}\calO_{F'}=R\pot{u}$ (resp.~$R\rpot{t}\otimes_{{F}} {F'}=R\rpot{u}$). This gives an equality on loop groups $L^+\calG_x=L^+\calG'_{x'}$ (resp.~$LG=LG'$). Hence, there is an equality on twisted affine Grassmannians $\Gr_{G,x}=\Gr_{G',x'}$, and it is obvious that the Schubert varieties (resp.~their normalizations) correspond to each other.
\end{proof}

Combining Proposition \ref{Ad_Iso_prop} with Lemma \ref{product}, it is obvious how to extend our classification from the absolutely simple adjoint case to the case of general tamely ramified connected reductive groups. From the discussion, we also see that we can relax the condition on $G$ of being tamely ramified to the condition that each absolutely simple adjoint factor $G_i$ is tamely ramified. In particular, our classification includes all cases where $\on{char}(k) \geq 5$, cf.~the beginning of the next section.

\section{Weight-multiplicity-free representations}\label{weight_free_sec}
We proceed with the notation of \S\ref{Ratl_Smooth_Sec}, and assume further that $G$ is adjoint and absolutely simple. Then the splitting field $F'/F$ is of degree $[F':F]=1$, $2$ or $3$, cf.~\cite[\S 4]{Ti77}. So if $G$ is non-split (i.e., $[F':F]=2$ or $3$), then the assumption of being tame excludes only $2$ or $3$ from being the residue characteristic $[F':F]$.

We are interested in classifying all irreducible representations $V_{\bar{\mu}}$ of $(G^\vee)^I$ such that $d_{\bar{\mu}}(\bar{\la})=1$ for all $\bar{\la}\in X_*(T)_I^+, \bar{\la}\leq \bar{\mu}$. These representations are called {\em weight-multiplicity-free}.

\begin{lem} \label{dual_grp}
The group $(G^\vee)^I$ is a connected reductive $\algQl$-group which is simple and semisimple. Furthermore, it is simply connected except in the case $G^\vee= \on{SL}_{2n+1}$, $n\geq 1$ with a non-trivial $I$-action, in which case $(G^\vee)^I \cong \on{SO}_{2n+1}$.
\end{lem}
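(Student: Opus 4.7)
The plan is to combine Steinberg's theorem on fixed points of pinned automorphisms of simply connected groups with a case-by-case identification of the folded subgroup. Since $G$ is adjoint and absolutely simple, the Langlands dual $G^\vee$ is simply connected and simple, and the $I$-action preserves a pinning; hence it factors through the finite image $J$ of $I$ in $\on{Out}(G^\vee)$. Steinberg's theorem asserts that the fixed subgroup of a finite group of pinned automorphisms on a simply connected semisimple algebraic group is connected and reductive. Applied here it yields that $(G^\vee)^I$ is a connected reductive $\algQl$-group, sharpening the ``possibly disconnected'' statement already recalled from \cite[Prop.\,4.1(a)]{Hai15}.

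Next I would enumerate the cases. Because $k$ is algebraically closed, $F=k\rpot{t}$ has procyclic tame Galois group, so $J$ is cyclic. The Dynkin diagram classification shows $\on{Out}(G^\vee)$ is nontrivial only in types $A_n$ ($n\geq 2$), $D_n$ ($n\geq 4$), and $E_6$; the cyclic possibilities are then $J=\bbZ/2$ in each type and additionally $J=\bbZ/3$ in type $D_4$. For each such pair one identifies the fixed subgroup by the classical folding recipe: $(\SL_{2m+1})^{\bbZ/2}=\SO_{2m+1}$, $(\SL_{2m})^{\bbZ/2}=\on{Sp}_{2m}$, $(\on{Spin}_{2n})^{\bbZ/2}=\on{Spin}_{2n-1}$ for $n\geq 5$, $(\on{Spin}_8)^{\bbZ/2}=\on{Spin}_7$, $(\on{Spin}_8)^{\bbZ/3}=G_2$, and $(E_6^{\on{sc}})^{\bbZ/2}=F_4$; a reference such as \cite[\S1.3]{Hai18} covers all these identifications. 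Each of these groups is simple and semisimple, establishing those assertions.

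Inspecting the list one sees that every fixed group is simply connected with the single exception of $\SO_{2m+1}$, which occurs precisely for $G^\vee=\SL_{2m+1}$ with nontrivial $I$-action, yielding the stated exception. The main (rather minor) obstacle is bookkeeping: one must ensure that no pinned outer action is overlooked, that Steinberg's connectedness is being invoked in its simply connected form (guaranteed by $G=G_{\on{ad}}$), and that the non-simply-connected output in the $A_{2m}$ case is correctly identified.
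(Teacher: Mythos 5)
Your proposal is correct, but it takes a genuinely different route from the paper. The paper's proof of this lemma is a one-line reduction to a general appendix result (Proposition \ref{fix_pt_prop}, applied with $\kappa=\algQl$), which is proved uniformly without case analysis: connectedness comes from $\pi_0(T^I)\overset{\sim}{\to}\pi_0(H^I)$ together with the fact that $X^*(T)$ is an induced $I$-module when $H$ is simply connected (so $X^*(T^I)=X^*(T)_I$ is torsion-free); simplicity comes from the $I$-average of the highest root being the unique highest root of the fixed group; and simple-connectedness is settled by a duality computation comparing $\bbZ\Phi^\vee(H^I)$, spanned by the modified norms $N_I'$ of simple coroots, with $(\bbZ\Phi^\vee(H))^I$, spanned by the unmodified norms $N_I$ — these coincide exactly outside type $A_{2n}$ with nontrivial action, which isolates the $\SO_{2n+1}$ exception. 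You instead invoke Steinberg's connectedness theorem for a semisimple (here pinned, finite-order, characteristic-zero) automorphism of a simply connected group, note that the image of $I$ in $\on{Out}(G^\vee)$ is cyclic of order $1$, $2$ or $3$ because the tame Galois group of $k\rpot{t}$ is procyclic, and then identify each fixed group by the classical folding recipe, reading off simple-connectedness from the explicit list. Both arguments are sound; your enumeration of pinned cyclic actions is complete and the folded groups are correctly identified (in particular $(\SL_{2n+1})^{\bbZ/2}\cong\SO_{2n+1}$, not its spin cover). What the paper's route buys is uniformity and reusability: Proposition \ref{fix_pt_prop} needs no classification, works for any finite pinning-preserving action over any $\kappa$ with $\on{char}(\kappa)\neq 2$, and is quoted again later (e.g.\ for $H^{\sigma_0}=G_2$ in \S\ref{fixed_rts_sec}). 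What your route buys is that the explicit identifications simultaneously give the content of Lemma \ref{dual_list}; its only costs are the reliance on the tabulated folding facts (your citation of \cite{Hai18} for them is loose) and the need to phrase Steinberg's theorem for a single semisimple automorphism, which suffices here precisely because you have already reduced to a cyclic action.
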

\begin{proof} 
Since the $I$-action preserves a pinning of $G^\vee$, this follows from Proposition \ref{fix_pt_prop}, taking $\kappa = \algQl$.
\end{proof}

\begin{lem}\label{dual_list}
The following list gives all possibilities for $(G^\vee)^I$:
\begin{itemize}
\item[i)] $[F':F]=1$: $G$ split; $(G^\vee)^I=G^\vee$.
\item[ii)] $[F':F]=2$: 
\subitem a\textup{)} $G =\on{PU}_{2n}$, $n\geq 3$, name $B\str C_n$; $(G^\vee)^I=\on{Sp}_{2n}$, type $C_n$.
\subitem b\textup{)} $G =\on{PU}_{2n+1}$, $n\geq 1$, name $C\str BC_n$; $(G^\vee)^I=\on{SO}_{2n+1}$, type $B_n$. 
\subitem \textup{c)} $G =\on{PSO}_{2n+2}$, $n\geq 2$, name $C\str B_n$; $(G^\vee)^I = {\rm Spin}_{2n+1}$, type $B_n$. 
\subitem \textup{d)} $G  = \,^2E^2_{6,4}$,  `ramified $E_6$', name $F_4^I$; $(G^\vee)^I=F_4$.
\item[iii)] $[F':F]=3$: $G = \,^3D_{4,2}$, `ramified triality', name $G_2^I$; $(G^\vee)^I=G_2$.
\end{itemize}
Hence, $(G^\vee)^I$ is simply connected except in case ii.b\textup{)} where the connection index is $2$. In this case, $(G^\vee)^I=\on{SO}_{2n+1}$ is adjoint.
\end{lem}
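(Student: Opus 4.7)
The plan is to enumerate, via Tits' classification, all tame absolutely simple adjoint $F$-groups $G$, and then to identify the fixed-point subgroup $(G^\vee)^I$ in each case.

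First I would list the possibilities for $G$. Since $G$ splits over $F'$, the Galois action on its absolute Dynkin diagram factors through $I = \Gal(F'/F)$. Because $k$ is algebraically closed, any finite tame extension of $F = k\rpot{t}$ has cyclic Galois group, so $I$ is cyclic; hence non-trivial $I$-actions on the Dynkin diagram arise only in the cyclic cases of order $2$ (types $A_n$, $D_n$, $E_6$) or order $3$ (triality of $D_4$). Reading Tits' tables \cite[\S 4]{Ti77} for each allowed type yields the split case together with the five non-split entries of the statement, and the ``name'' column records the corresponding local Dynkin diagram of loc.\,cit.

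Second, for each case I would identify $(G^\vee)^I$. Since $G$ is adjoint and absolutely simple, $G^\vee$ is simply connected and absolutely simple of the same type; by Proposition \ref{fix_pt_prop} the $I$-action on $G^\vee$ preserves a pinning, hence is realized by a pinned finite-order diagram automorphism. Steinberg's theorem on fixed-point subgroups of pinned automorphisms then gives the standard foldings: the order-$2$ folding of $\SL_{2n}$ is $\on{Sp}_{2n}$, of $\SL_{2n+1}$ is $\on{SO}_{2n+1}$, of $\on{Spin}_{2n+2}$ is $\on{Spin}_{2n+1}$, of $E_6^{\on{sc}}$ is $F_4$, and the order-$3$ triality folding of $\on{Spin}_8$ is $G_2$. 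Each identification can be confirmed from the folded Dynkin diagram together with an explicit description of the fixed root datum, and matches the right-hand side of the table.

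Finally, the assertions about the connection index are immediate from Lemma \ref{dual_grp}: all the foldings above are simply connected with the single exception of $\on{SO}_{2n+1}$, which is adjoint of type $B_n$ with connection index $2$ relative to $\on{Spin}_{2n+1}$. The only real obstacle is bookkeeping: one must match Tits' labels for $G$ on the left-hand side of the list with the correct pinned automorphism of $G^\vee$ on the dual side, after which each fixed-point calculation is classical.
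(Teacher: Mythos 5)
Your proposal is correct and reaches the same table, but by a different route from the paper. The paper reads the \'echelonnage type $\breve{\Sigma}$ off Tits' tables directly (it is encoded in the ``name'' column), then applies the duality theorem of \cite[\S5.1]{Hai18}, which identifies $\breve{\Sigma}^\vee$ with the root system of $((G^\vee)^I,(T^\vee)^I)$, to deduce the type of $(G^\vee)^I$ by dualization, and finally settles the isogeny class via Lemma \ref{dual_grp}. You instead compute $(G^\vee)^I$ directly as the fixed-point subgroup of the pinned diagram automorphism of the simply connected group $G^\vee$, invoking the classical Steinberg/folding fixed-point formulas for $\SL_{2n}$, $\SL_{2n+1}$, $\on{Spin}_{2n+2}$, $E_6^{\on{sc}}$, and $\on{Spin}_8$. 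Both routes are consistent with Proposition \ref{fix_pt_prop}. The paper's route has the minor advantage that $\breve{\Sigma}$ is literally tabulated by Tits and so no separate identification of the dual-side automorphism is needed; yours is more self-contained in that it bypasses the \'echelonnage duality of \cite{Hai18}, at the cost of the ``bookkeeping'' you correctly flag (matching Tits' label for $G$ to the right pinned automorphism of $G^\vee$). The final isogeny-type observation via Lemma \ref{dual_grp} is identical in both.
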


Here, the `name' refers to the name given by Tits in \cite[Table 4.2]{Ti77}, and the `type' refers to the type of the connected reductive group $(G^\vee)^I$. Tables containing essentially this content are contained in \cite[$\S5.2$]{HPR}, but here we describe the groups in classical terms, including the isogeny type.

\begin{rmk}\label{connected_rmk}
Case ii.b) shows that there is no nonzero $\breve{\Sigma}$-minuscule coweight for the non-split group $\on{PU}_{2n+1}$.  The fact that $\on{SO}_{2n+1}$ is adjoint means that every weight is in the root lattice.  This translates to  $X_*(T)_I = \bbZ[\breve{\Sigma}^\vee]$, which  in turn implies that the affine Grassmannian for $\on{PU}_{2n+1}$ is connected. Similarly, one proves that the affine Grassmannian for a non-split absolutely simple adjoint group $G$ is always connected, except in cases ii.a) and ii.c) where it has two connected components. This also shows that only these cases admit minuscule elements: checking the tables in \cite[Planche II \& III]{Bou} identifies $\om_1$ in ii.a), and $\om_n$ in ii.c) as the minuscule elements.
\end{rmk}

\begin{proof}
Checking the tables in \cite[\S 4]{Ti77} for residually split groups gives the above list. We make the following remarks.  Tits' tables list the \'{e}chelonnage root system attached to $G/F$. For example the group named $B \str C_n$ is a ramified unitary group ${\rm PU}_{2n}$ and has \'{e}chelonnage root system $\breve{\Sigma}$ of type $B_n$.  The group $(G^\vee)^I$ has type dual to $\breve{\Sigma}$ (cf.\,\cite[$\S5.1$]{Hai18}), and thus has type $C_n$ and being simply connected (Lemma \ref{dual_grp}), we see that $(G^\vee)^I = {\rm Sp}_{2n}$.  The other cases are handled similarly.
\end{proof}

The following theorem is proven in R.\,Howe's article \cite[Thm 4.6.3]{Ho95}, and we refer the reader to its introduction for further references on the subject. A classification of multiplicity one {\em primitive} pairs $\bar{\la}<\bar{\mu}$ is also given in \cite{BZ90}: these are the pairs such that $d_{\bar{\mu}}(\bar{\lambda}) = 1$ and every simple root for $(G^\vee)^I$ appears at least once in the difference $\bar{\mu} - \bar{\lambda}$; from this one may classify all pairs such that $d_{\bar{\mu}}(\bar{\lambda}) = 1$.

\begin{thm} \label{mult_thm}
Let $\bar{\mu}\in X_*(T)_I^+$, and denote by $X_n$ the type of $(G^\vee)^I$ where $n\geq 1$ is the rank of $(G^\vee)^I$. Then the $(G^\vee)^I$-representation $V_{\bar{\mu}}$ is weight-multiplicity-free if and only if the pair $(X_n,\bar{\mu})$ appears in the following list:
\begin{itemize}
\item any type $X_n$, and $\bar{\mu}$ minuscule;
\item type $A_1$, and $\bar{\mu}$ arbitrary;
\item type $A_n$, $n\geq 2$, and $\bar{\mu}=l\cdot \om_i$ for $i \in \{1, n\}$ and $l\geq 2$;
\item type $B_n$, $n\geq 2$, and $\bar{\mu}$ quasi-minuscule;
\item type $C_3$, and $\bar{\mu}=\om_3$ \textup{(}not quasi-minuscule\textup{)}; 
\item type $G_2$, and $\bar{\mu}$ quasi-minuscule.
\end{itemize}
\qed
\end{thm}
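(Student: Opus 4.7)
The plan is to establish both directions of the equivalence, organized by the type of $(G^\vee)^I$, with Freudenthal's multiplicity formula as the main computational tool. For the forward direction I would verify each entry of the list individually. Minuscule representations have a single Weyl orbit of weights, all with multiplicity one. For $A_1$, the irreducible representation of highest weight $l$ has weights $l, l-2, \dots, -l$, each appearing once. For $A_n$ with $\bar{\mu} = l\om_i$ and $i \in \{1,n\}$, $V_{\bar{\mu}}$ is $\on{Sym}^l$ of the (dual of the) standard representation, whose weight spaces are monomial. For $B_n$ quasi-minuscule, $V_{\om_1}$ is the standard $(2n+1)$-dimensional representation with weights $\pm e_i, 0$. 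For $(C_3, \om_3)$, a direct computation shows that the $14$-dimensional primitive summand of $\Lambda^3 V_{\om_1}$ has weights $\pm e_1 \pm e_2 \pm e_3$ (all eight sign combinations) together with $\pm e_i$, each of multiplicity one. For $G_2$ quasi-minuscule, $V_{\om_1}$ is the standard seven-dimensional representation with weights $0$ and the six short roots.

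For the reverse direction, the key principle is that weight-multiplicity-freeness is inherited under Levi restriction: if $V_{\bar{\mu}}$ is weight-multiplicity-free for $(G^\vee)^I$ and $L$ is the Levi of a standard parabolic, then every irreducible $L$-summand of $V_{\bar{\mu}}|_L$ is itself weight-multiplicity-free. Combined with the observation that the adjoint representation $V_\theta$ has the zero weight with multiplicity equal to the rank of $(G^\vee)^I$, this forces strong restrictions as soon as the rank is at least two, and it enables an induction on rank.

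The induction proceeds in three layers. First, the rank-two base cases $A_2$, $B_2 = C_2$, and $G_2$ are settled by direct application of Freudenthal's formula to the finitely many candidate weights in a dominance neighborhood of the quasi-minuscule ones. Second, for the classical types of higher rank, restriction to a standard Levi of rank one less disposes of the bulk of candidates: for $A_n$ with $n \geq 3$, any $\bar{\mu}$ outside $\{l\om_1, l\om_n\}$ restricts to contain a forbidden $A_{n-1}$-summand; for $B_n, C_n, D_n$ with $n \geq 3$, restriction to a type-$A$ Levi together with a Freudenthal check exhibits a weight of multiplicity at least two outside the listed minuscule and quasi-minuscule cases, and one additionally verifies by hand that the sporadic pair $(C_3, \om_3)$ is isolated, for example by showing that $2\om_3$ and $\om_1 + \om_3$ fail. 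Third, for the exceptional types $F_4, E_6, E_7, E_8$, a restriction argument to a classical Levi combined with the classification just established for the classical series shows that only the minuscule weights (when they exist) contribute.

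The main obstacle will be the systematic execution of the classical-type step, where the induction is conceptually clean but the sporadic exception $(C_3, \om_3)$ does not emerge naturally from it; it has to be recognized and ruled out separately. The delicate point is to identify, for each classical family, a small set of test weights whose Freudenthal multiplicities suffice to rule out all non-listed $\bar{\mu}$, and to see why no analogous sporadic exception appears in higher-rank $C_n$ or in the other classical series. This is essentially the strategy of Howe \cite{Ho95}; the resulting classification is remarkably short given the computational effort required to prove it is complete.
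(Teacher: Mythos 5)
You should first be aware that the paper does not prove this statement at all: Theorem \ref{mult_thm} is quoted from Howe \cite[Thm 4.6.3]{Ho95}, with the remark that one could alternatively deduce it from the Berenstein--Zelevinsky classification \cite{BZ90} of multiplicity-one \emph{primitive} pairs $\bar{\la}<\bar{\mu}$. So your proposal, which attempts a self-contained classification, is by construction a different route; it is essentially an outline of how such a classification is proved in the literature (as you acknowledge). Your forward direction is complete and correct: the explicit weight descriptions for minuscule weights, $A_1$, $\on{Sym}^l$ of the (dual) standard representation in type $A_n$, the standard representations in types $B_n$ and $G_2$, and the $14$-dimensional $V_{\om_3}$ for $C_3$ are all accurate, and the Levi-restriction inheritance principle you state for the converse is sound.

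The converse, however, has a genuine gap as written: for each type you must rule out \emph{infinitely many} dominant weights, and ``applying Freudenthal's formula to the finitely many candidate weights in a dominance neighborhood of the quasi-minuscule ones'' does not do this by itself. You need an explicit reduction mechanism -- e.g.\ a monotonicity statement for weight multiplicities along the dominance order (if $\bar{\la}\leq\bar{\mu}\leq\bar{\mu}'$ are dominant then $d_{\bar{\mu}'}(\bar{\la}+\bar{\mu}'-\bar{\mu})\geq d_{\bar{\mu}}(\bar{\la})$), or the reduction to primitive pairs that is the whole point of \cite{BZ90} -- to legitimize checking only a bounded list of highest weights per type; without it the rank-two base cases and the ``Freudenthal check'' steps for classical types are not actually finite verifications. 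A smaller inaccuracy: in type $A_n$, $n\geq 3$, it is not true that every $\bar{\mu}$ outside $\{l\om_1,l\om_n\}$ restricts to contain a forbidden $A_{n-1}$-summand, since the minuscule weights $\om_i$, $1<i<n$, are weight-multiplicity-free and must be excluded from that claim; the induction has to be run against the full candidate list (minuscule together with $l\om_1,l\om_n$), with the branching rules checked accordingly. If you intend to fill these steps, it would be more efficient (and closer to the paper's intent) to invoke \cite{Ho95} or \cite{BZ90} directly, or at least to quote the multiplicity-monotonicity lemma as the engine of the completeness argument.
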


\begin{rmk}
It is interesting to observe that only in type $A_n$ are there infinitely many weight-multiplicity-free representations. Also, outside of type $A$ cases and the single type $C_3$ case, the following implication holds: ``if $V_{\bar{\mu}}$ is weight-multiplicity-free, then $\bar{\mu}$ is (quasi-)minuscule.''
\end{rmk}

\subsection{Proof of Theorem \ref{ratl_sm_thm}}\label{Proof_Ratl_Sm_Sec}
This is a combination of Proposition \ref{ratl_sm_prop} and Theorem \ref{mult_thm} with the list in Lemma \ref{dual_list}. Indeed, these results make no reference to the choice of special vertex $x\in \scrB(G,F)$ which we therefore do not specify. Drop it from the notation for the rest of the proof. By Proposition \ref{ratl_sm_prop}, the Schubert variety $\Gr_{G}^{\leq \bar{\mu}}$ is rationally smooth if and only if the $(G^\vee)^I$-representation $V_{\bar{\mu}}$ is weight-multiplicity-free. Theorem \ref{mult_thm} gives a complete list of all pairs $((G^\vee)^I,\bar{\mu})$ such that $V_{\bar{\mu}}$ is weight-multiplicity-free. Clearly, if $\bar{\mu}$ is minuscule, there are no restrictions on the group. Assume now that $\bar{\mu}$ is not minuscule. If $G$ is split, then Theorem \ref{mult_thm} directly applies to give the rationally smooth cases listed in our theorem. If $G$ is not split, we use Lemma \ref{dual_list} to translate Theorem \ref{mult_thm} back in terms of the group $G$. Note that the group $\on{PU}_3$ appears from Lemma \ref{dual_list}, ii.b) using the exceptional isomorphism of Lie types $B_1=A_1$. This proves the theorem. 

\begin{rmk} \label{zero_wt_rmk}
When $\bar{\mu}$ is quasi-minuscule, it is known that the dimension of the zero-weight space $V_{\bar{\mu}}(0)$ is the number of short nodes in the Dynkin diagram for the group $(G^\vee)^I$. From this, Lemma \ref{dual_list}, and Proposition \ref{ratl_sm_prop}, one can easily determine the groups $G$ whose quasi-minuscule Schubert variety is rationally smooth, without invoking Theorem \ref{mult_thm}.
\end{rmk}

\section{Absolutely special vertices}\label{Abs_Special_Sec}

Temporarily we assume $G$ is any connected reductive group over an arbitrary field $F$ endowed with a non-trivial discrete valuation, and $F'/F$ is a finite separable extension splitting $G$. Following \cite{Ti77} we assume $F$ is complete and its residue field is perfect.

\begin{dfn}
A vertex $x\in \scrB(G,F)$ is called {\em absolutely special} if its image under the simplicial embedding $\scrB(G,F)\hookto \scrB(G,F')$ is a special vertex.
\end{dfn}
Note that this notion is independent of the choice of the splitting field $F'/F$.

\begin{lem} \label{abs_spec_lem}
Absolutely special vertices exist in every quasi-split group $G$ and are special. 
\end{lem}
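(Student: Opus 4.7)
The plan is to construct an absolutely special vertex directly from the quasi-split structure of $G$, and then to check specialness in the relative sense by comparing Weyl groups.

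First I would set up the geometric picture. Since $G$ is quasi-split, choose $S \subset T \subset B$ defined over $F$ with $T := Z_G(S)$ a maximal torus. Over $F'$ the torus $T$ splits, and by the Bruhat--Tits/Rousseau descent for buildings, the $F$-apartment $\scrA(G,S,F)$ is identified with the $\Gamma$-fixed locus of $\scrA(G_{F'}, T_{F'}, F')$, where $\Gamma := \Gal(F'/F)$. Hence it suffices to produce a $\Gamma$-fixed hyperspecial vertex of $\scrA(G_{F'}, T_{F'}, F')$.

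For existence, I would use that a quasi-split $G$ admits a $\Gamma$-equivariant Chevalley--Steinberg pinning of $G_{F'}$: a Borel, a maximal torus, and root group parametrizations forming a pinning, all stable under $\Gamma$. Such a pinning determines a Chevalley $\calO_{F'}$-model $\ucG_0$ of $G_{F'}$ carrying a $\Gamma$-action. The associated vertex $x_0$ of $\scrA(G_{F'}, T_{F'}, F')$ is hyperspecial in the split building, and being built canonically from $\Gamma$-invariant data, it is $\Gamma$-fixed. Thus $x_0$ descends to a vertex of $\scrB(G,F)$ which is absolutely special.

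To see that $x_0$ is then special in $\scrB(G,F)$, I would compare stabilizers in the affine Weyl groups. Specialness of $x_0$ in $\scrB(G,F')$ means that the vectorial projection of its stabilizer in the $F'$-affine Weyl group equals the absolute Weyl group $W = N_G(T)/T$. For the quasi-split pair $(G,S)$ the relative Weyl group is $W^{\Gamma}$; it is contained in $W$ and hence in the vectorial stabilizer of $x_0$, which is precisely the condition that $x_0$ is special with respect to the \'echelonnage root system $\breve{\Sigma}$.

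The main obstacle I expect is the Bruhat--Tits/Rousseau descent input, namely the identification $\scrA(G,S,F) = \scrA(G_{F'},T_{F'},F')^{\Gamma}$ and the assertion that a $\Gamma$-fixed vertex of the $F'$-apartment genuinely corresponds to a vertex (not merely a point of an apartment) of $\scrB(G,F)$. Granting this standard descent together with the existence of a $\Gamma$-equivariant Chevalley--Steinberg pinning for quasi-split groups, the remainder of the argument is essentially formal.
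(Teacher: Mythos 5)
Your construction of an absolutely special vertex is sound and genuinely different from the paper's. The paper argues purely combinatorially inside the apartment: it produces a $\Gamma$-stable set of affine roots $\{\alpha_1,\dots,\alpha_l\}$ lifting the simple roots $a_1,\dots,a_l$ (choosing one lift per $\Gamma$-orbit and spreading it by $\Gamma$-equivariance), and then takes a $\Gamma$-fixed point of the common zero set $\{\alpha_1=\cdots=\alpha_l=0\}$. You instead invoke a $\Gamma$-equivariant Chevalley--Steinberg pinning to exhibit an $\calO_{F'}$-Chevalley model whose vertex is $\Gamma$-fixed. Both work, and both ultimately rest on the same input (a $\Gamma$-stable pinning of the split form); yours packages that input structurally, at the cost of having to check that your trivialization $G_{F'}\cong G_0\times_{\bbZ}F'$ intertwines the $\Gamma$-actions so that $G_0(\calO_{F'})$ really is $\Gamma$-stable. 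You should note that you only need $\scrA(G,S,F)=\scrA(G,T,F')^\Gamma$ at the level of apartments, which is part of Bruhat--Tits' quasi-split construction; this is why, like the paper, the argument does not actually require tameness.

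However, the step you give for deducing relative specialness has a real gap. You argue that $W^\Gamma\subset W$ and $W$ equals the vectorial projection of $\Stab_{W_{\mathrm{aff}}(G,F')}(x_0)$, and conclude that $x_0$ is special for the \'echelonnage system $\breve{\Sigma}$. But the containment $W^\Gamma\subset W$ is tautological and says nothing about $x_0$; and the vectorial projection of $\Stab_{W_{\mathrm{aff}}(G,F')}(x_0)$ is a statement about the $F'$-affine root system, not the $F$-affine (\'echelonnage) system acting on $\scrA(G,S,F)$. Specialness in $\scrA(G,S,F)$ means: for every relative root $b\in\Phi(G,S)$ there is an $F$-affine root with vector part proportional to $b$ vanishing at $x_0$. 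This does not follow from $x_0$ being special in $\scrA(G,T,F')$ by comparing Weyl group stabilizers alone, because the $F$-affine roots are produced from $\Gamma$-orbits of $F'$-affine roots by the non-trivial \'echelonnage process. The needed input, which the paper cites as \cite[4.1.2]{BT84}, is that every relative root $b\in X^*(S)$ is the restriction of an absolute root $a\in X^*(T)$; since $x_0$ is $\Gamma$-fixed and absolutely special, the whole $\Gamma$-orbit of an $F'$-affine root with vector part $a$ vanishing at $x_0$ does so, and this orbit yields an $F$-affine root with vector part $b$ vanishing at $x_0$. Without this bridge between $\breve{\Sigma}$ and $\Gamma$-orbits of absolute affine roots, your argument does not close.
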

\begin{proof} 
This is modeled on Tits' proof of the existence of hyperspecial points for unramified groups, cf.\,\cite[p.\,36]{Ti77}. Since $G$ is quasi-split, there exist $S\subset T\subset B$ defined over $F$ as above. We may assume $F'/F$ is Galois, and we write $\Gamma := {\rm Gal}(F'/F)$. Let $a_1, \dots, a_l$ denote a $\Gamma$-stable basis of $B$-simple absolute roots for $(G,T)$. Clearly $\Gamma$ acts on the apartment $\scrA(G,T, F') \subset \scrB(G, F')$, and also on the set of affine roots $\Phi_{\rm aff} := \Phi_{{\rm aff}}(G,T, F')$ by construction, cf.~\cite[$\S1.6$]{Ti77}. We claim that there is a $\Ga$-stable set $\{\alpha _1, \dots, \alpha_l\}\subset \Phi_{{\rm aff}}$ such that the vector part of each $\alpha_j$ is $a_j$. 

Indeed, suppose we are given a $\Gamma$-orbit $\{a_{i_1}, \dots, a_{i_r}\}$ of simple roots.  We change notation and write these as $a_1, \dots, a_r$.  Choose arbitrarily an $\alpha_1 \in \Phi_{{\rm aff}}$ whose vector part is $a_{1}$.  Then for each $a_{j}$, $1 \leq j \leq r$, choose $\gamma_j \in \Gamma$ such that $\gamma_j(a_{1}) = a_{j}$, and set $\alpha_{j} := \gamma_j(\alpha_{1})$. This is well-defined because if $\gamma \in \Gamma$ fixes $a_{1}$, then it fixes $\alpha_1$ by definition of the $\Ga$-action.

Now recall from \cite[4.1.2]{BT84} that any relative root in $X^*(S)$ for $G$ is the restriction of a root in $X^*(T)$ for $G_{F'}$. Hence, any $\Gamma$-fixed point in the solution set of $\alpha_1 = \alpha_2 = \cdots = \alpha_l = 0$ is the desired absolutely special vertex of $\scrA(G,T,F')^\Gamma = \scrA(G,S,F)$. This shows existence, and also that any absolutely special vertex is special.  
\end{proof}


 Note that the above result holds even if $G$ is not tamely ramified. Now we continue with the notation and hypotheses of \S\ref{weight_free_sec}. In particular we are again assuming $G$ is adjoint, absolutely simple, and tamely ramified over $F$.

\begin{lem}\label{conjugacy_lem}
Assume that $G$ is not isomorphic to $\on{PU}_{2n+1}$ for any $n\geq 1$. Then all special vertices in $\scrB(G,F)$ are conjugate under $G(F)$, and in particular are absolutely special.
\end{lem}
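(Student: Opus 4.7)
The plan is to reduce the statement to the combinatorics of the local Dynkin diagram and then to verify it case by case using Tits' tables \cite[\S4]{Ti77} together with Lemma \ref{dual_list}. Fix an alcove $\mathbf{a}$ contained in the apartment $\scrA(G,S,F)$, and let $\tilde W = N_G(T)(F)/T(F)_1$ be the Iwahori--Weyl group, $W_{\aff}\subset \tilde W$ the affine Weyl subgroup, and $\Omega$ the stabilizer of $\mathbf{a}$ in $\tilde W$, so that $\tilde W = W_{\aff} \rtimes \Omega$ and $\Omega$ acts on $\bar{\mathbf{a}}$ through automorphisms of the local Dynkin diagram of $G$.

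Given two special vertices $x, y \in \scrB(G,F)$, I would first use the $G(F)$-action to bring both into $\scrA(G,S,F)$ and then use $W_{\aff}$ to translate them into $\bar{\mathbf{a}}$. For points of a single apartment, $G(F)$-conjugacy coincides with $N_G(T)(F)$-conjugacy and hence with $\tilde W$-conjugacy, and since $W_{\aff}$ acts simply transitively on alcoves, two points of $\bar{\mathbf{a}}$ are $\tilde W$-conjugate if and only if they lie in the same $\Omega$-orbit. Thus the problem reduces to showing that $\Omega$ acts transitively on the set of special vertices of $\bar{\mathbf{a}}$.

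This latter claim I would verify case by case. For split adjoint $G$ it is classical that $\Omega \cong X_*(T)/\bbZ\Sig^\vee$ acts simply transitively on the special vertices of $\bar{\mathbf{a}}$. For the non-split groups appearing in Lemma \ref{dual_list} other than $\on{PU}_{2n+1}$: in types $B\str C_n$ (i.e.\ $G = \on{PU}_{2n}$) and $C\str B_n$ (i.e.\ $G = \on{PSO}_{2n+2}$) the local Dynkin diagram has exactly two special vertices, sitting at the two ends of the finite diagram, and $\Omega \cong \bbZ/2\bbZ$ swaps them; in types $F_4^I$ (ramified $E_6$) and $G_2^I$ (ramified triality), the alcove $\bar{\mathbf{a}}$ contains a unique special vertex, so transitivity is automatic. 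By contrast, for $G = \on{PU}_{2n+1}$ the diagram $C\str BC_n$ has two special end-vertices of inequivalent combinatorial type with trivial $\Omega$, which is precisely why the statement must exclude this case.

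Finally, the ``in particular'' assertion follows immediately from Lemma \ref{abs_spec_lem}: since $G$ is quasi-split by Steinberg's theorem, at least one absolutely special vertex $x_0 \in \scrB(G,F)$ exists, and being absolutely special is invariant under $G(F) \subset G(F')$-conjugation; hence every special vertex, being $G(F)$-conjugate to $x_0$, is absolutely special. The chief obstacle is the case-by-case inspection of Tits' tables and the identification of $\Omega$ for the non-split adjoint groups listed above, in particular verifying that $\Omega$ acts nontrivially on the two special vertices in types $B\str C_n$ and $C\str B_n$ while being trivial in the excluded type $C\str BC_n$.
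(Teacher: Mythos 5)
Your proof is correct, but it takes a genuinely different route from the paper's. You reduce to the closed base alcove via $\tilde W = W_{\aff}\rtimes \Omega$ and then check case by case, from Tits' tables, that $\Omega$ acts transitively on the special vertices of $\bar{\mathbf a}$ (and is trivial exactly in the excluded type $C\str BC_n$). The paper argues uniformly inside the apartment instead: by Bourbaki, the special points of $\scrA=X_*(T)_{I,\bbR}$ form the coweight lattice $P(\breve{\Sig}^\vee)$ of the \'echelonnage system, while $T(F)$ acts on $\scrA$ by translations through $X_*(T)_I=X^*((T^\vee)^I)$; the inclusion $X_*(T)_I\subset P(\breve{\Sig}^\vee)$ is an equality precisely when $(G^\vee)^I$ is simply connected, which Lemma \ref{dual_list} guarantees once case ii.b) is excluded, so translations already permute the special points transitively. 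Your $\Omega$-computation is equivalent to this lattice equality (both say that $X_*(T)_I$ surjects onto $P(\breve{\Sig}^\vee)/Q(\breve{\Sig}^\vee)$), so the two arguments rest on the same input; the paper's version buys uniformity (no case inspection), while yours makes visible why $\on{PU}_{2n+1}$ fails ($\Omega$ trivial but two $W_{\aff}$-classes of special vertices). Your building-theoretic reductions (conjugacy of points of one apartment is detected by $N_G(S)(F)=N_G(T)(F)$, the closed alcove is an exact fundamental domain for $W_{\aff}$) are standard and correctly used, though they deserve a citation. One small factual slip: for $B\str C_n$ (ramified $\on{PU}_{2n}$) the \'echelonnage system is $B_n$, so the two special vertices of the alcove are the two prongs of the fork in the corresponding affine diagram (the nodes with mark $1$ in the highest root), not two opposite ends of a chain; this is harmless, since all you use is that $\Omega\cong\bbZ/2$ interchanges the two classes, which is true. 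The ``in particular'' step via Lemma \ref{abs_spec_lem} coincides with the paper's.
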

\begin{proof} The last assertion follows from Lemma \ref{abs_spec_lem} using that the property of being absolutely special is invariant under $G(F)$-conjugacy. It remains to show that all special vertices are conjugate. This is implicitly contained in \cite[\S2.5]{Ti77}, and we add some details. Clearly, it is enough to show that all special points in the apartment $\scrA:=\scrA(G,T,F)$ are conjugate. Fix a special point $0\in\scrA$, and identify $\scrA=X_*(T)_{I,\bbR}$. We claim that $X_*(T)_I\subset \scrA$ is exactly the subset of special points. The claim implies the lemma because the action of $T(F)$ on $\scrA$ is via translation under $T(F)/\calT^o(\calO_F)\simeq X_*(T)_I$, and thus $T(F)$ permutes all special points. It remains to show the claim. By \cite[VI.2.2, Prop 3]{Bou}, the special points in $\scrA$ are identified with the weight lattice $P(\breve{\Sig}^\vee)$ for the \'echelonnage roots. In general, we have an inclusion $X_*(T)_I=X^*((T^\vee)^I)\subset P(\breve{\Sig}^\vee)$ which is an equality if and only if $(G^\vee)^I$ is simply connected. But this holds true by Lemma \ref{dual_list} because we excluded case ii.b) by assumption. This proves the lemma.
\end{proof}


Now assume $G=\on{PU}_{2n+1}$ for $n\geq 1$. Then up to $G(F)$-conjugation there are two kinds of special vertices. If $n\geq 2$, the local Dynkin diagram is of the form
\begin{equation}\label{C-BC_n}
\begin{aligned}
 \begin{tikzpicture}[scale=.4]
    \draw (0,0) node[anchor=east]  {$\textup{C-BC}_n$};
    \foreach \x in {2,...,5}
    \draw[xshift=\x cm,thin] (\x cm,0) circle (.1cm) node[above] {};
     \draw[xshift=1 cm,thin] (1 cm,0) circle (.1cm) node[above] {\tiny $\on{s}$};
     \draw[xshift=6 cm,thin] (6 cm,0) circle (.1cm) node[above] {\tiny $\on{as}$};
    \draw[xshift=1.15 cm,thin] (1.15 cm,.06cm) -- +(1.4 cm,0);
      \draw[xshift=1.15 cm] (1.15 cm,0) +(1.4 cm,0) node[left] {$<$} ;
    \draw[xshift=1.15 cm,thin] (1.15 cm,-.06cm) -- +(1.4 cm,0);
     \draw[xshift=2.15 cm,thin] (2.15 cm,0) -- +(1.4 cm,0);
      \draw[xshift=3.15 cm, dotted, thick] (3.15 cm,0) -- +(1.4 cm,0);
        \draw[xshift=4.15 cm,thin] (4.15 cm,0) -- +(1.4 cm,0);
     \draw[xshift=5.15 cm,thin] (5.15 cm,.06cm) -- +(1.4 cm,0);
      \draw[xshift=5.15 cm] (5.15 cm,0) +(1.4 cm,0) node[left] {$<$} ;
    \draw[xshift=5.15 cm,thin] (5.15 cm,-.06cm) -- +(1.4 cm,0); 
  \end{tikzpicture}
  \end{aligned}
\end{equation}
If $n=1$, the local Dynkin diagram is drawn in \eqref{C-BC_1} below, and looks similar. Here $F'/F$ is a ramified quadratic extension, and the vector space $V:=(F')^{2n+1}$ is equipped with a non-degenerate split Hermitian form as in \cite[3.11]{Ti77} (cf.~also \cite[1.2.1]{PR09}). The vertex labeled ``as'' is absolutely special, and corresponds to a selfdual $\calO_{F'}$-lattice in $V$ whereas the vertex labeled ``s'' is special, but not absolutely special, and corresponds to an almost modular lattice, cf.~\cite[1.2.3 a)]{PR09}. Here an $\calO_{F'}$-lattice $\La\subset V$ is called selfdual if $\La^\perp=\La$ where $(\str)^\perp$ denotes the dual  lattice with respect to the Hermitian form. The lattice is called almost modular if $u\cdot\La \subset \La^\perp$ with colength $1$ where $u\in F'$ is a uniformizer.

\subsection{Proof of Theorem \ref{sm_thm}}\label{Proof_Sm_Sec}

We start with some preliminary remarks. 
The normalization 
\begin{equation}\label{normalization_map_eq}
\tGr_{G,x}^{\leq\bar\mu}\to \Gr_{G,x}^{\leq\bar\mu}
\end{equation} 
is a finite, birational, universal homeomorphism by \cite[Prop.~3.1]{HRc}. 
In particular, the source of \eqref{normalization_map_eq} is rationally smooth if and only if its target is rationally smooth.
We therefore obtain the same list in Theorem \ref{ratl_sm_thm} for rationally smooth {\em normalized} Schubert varieties.
Also we give references below to articles which include explicit calculations for the special fibers of local models \cite{P00, Arz09, PR09, HPR}. To apply these references we need to often replace the adjoint group $G$ by a suitable central extension $\tilde{G}\to G$ such that $\pi_1(\tilde{G}_\der)=0$, cf.~\cite[(2.11)]{HPR}. 
Then the Schubert varieties for $\tilde G$ map isomorphically onto the normalized Schubert varieties for $G$, cf.~Proposition \ref{Ad_Iso_prop} using the normality of Schubert varieties for $\tilde G$ \cite[Thm.~6.1]{PR08}.
Hence, by \cite[Thm 9.1]{PZ13} the normalized Schubert variety is isomorphic to the special fiber of a suitable local model for $\tilde G$ which allows us to use these references.
Also we give references below to articles which contain results about the singularity of Schubert varieties \cite{EM99, MOV05} over the complex numbers. 
Here we refer to $\S\ref{Reduction_Sec}$ below for the reduction to $k = \mathbb C$ which allows us to use these references.

If $\bar{\mu}$ is minuscule, or if $G$ is an odd-dimensional ramified unitary group, $x$ is not absolutely special and $\bar{\mu}$ is quasi-minuscule, then $\tGr_{G,x}^{\leq \bar{\mu}}$ is smooth. Indeed, if $\bar{\mu}$ is minuscule, then $\Gr_{G,x}^{\leq \bar{\mu}}=\Gr_{G,x}^{\bar{\mu}}$ is a single orbit, and hence is smooth, so that its normalization is smooth as well.
The other case was observed by the second named author, and follows from an explicit calculation, cf.~\cite[Prop 4.16]{Arz09}.

Conversely assume that $\tGr_{G,x}^{\leq \bar{\mu}}$ is smooth. Then it is rationally smooth as well, and hence appears in the list of Theorem \ref{ratl_sm_thm}. We need to exclude from that list all the Schubert varieties which are singular. If $\bar{\mu}$ is minuscule, then $\tGr_{G,x}^{\leq \bar{\mu}}$ is smooth as argued above. Therefore, we have reduced to the case where $\bar{\mu}$ is not minuscule. In what follows, we list groups according to the type of the dual group $(G^\vee)^I$. 
\medskip

\noindent \textbf{Type $A_1$}, and $\bar{\mu}$ arbitrary: Note that there is an $l\geq 2$ such that $\bar{\mu}=l\cdot \om_1$. If $G=\PGL_2$ is split, then $\tGr_{G,x}^{\leq \bar{\mu}}$ is singular by \cite[\S5.1]{MOV05} (using $\S\ref{Reduction_Sec}$ below to reduce to $k = \mathbb C$ here, and below), cf.~also \cite[Thm 9.2]{Mue08} for an explicit matrix calculation. 
These cases are therefore excluded. If $G$ is not split, then according to Lemma \ref{dual_list} it is the $3$-dimensional quasi-split ramified projective unitary group. Note that only the weights $\bar{\mu}=l\cdot \om_1$ for {\em even} $l\geq 2$ appear in this case because $(G^\vee)^I=\on{SO}_3\simeq \PGL_2$ 
is not simply connected. By \S\ref{Reduction_Sec} and Proposition \ref{3dim_unitary} below, the normalized Schubert variety $\tGr_{G,x}^{\leq \bar{\mu}}$ is smooth only in the case where $x$ is special but not absolutely special, and $\bar{\mu}=2\om_1$ is quasi-minuscule, and so only this case is not excluded.\smallskip\\
\textbf{Type $A_n$}, $n\geq 2$ and $\bar{\mu}=l\cdot \om_i$, $i\in\{1,n\}$, $l\geq 2$: By Lemma \ref{dual_list}, the group $G$ is split, and hence $G=\PGL_{n+1}$. The singularity of these normalized Schubert varieties is a particular case of \cite{EM99, MOV05} (again reduce to $k = \mathbb C$). Let us be more specific. 
The inverse transpose morphism $G\to G$, $g\mapsto (g^{-1})^t$ induces an isomorphism on affine Grassmannians flipping the connected components, and in particular restricts to an isomorphism $\tGr_{G,x}^{\leq l\cdot\om_1}\simeq \tGr_{G,x}^{\leq l\cdot\om_{n}}$ for all $l\geq 2$. We are thus reduced to the case where $\bar{\mu}=l\cdot \om_1$, $l\geq 2$. Also by our general remarks above, we can identify the normalized Schubert variety with an ordinary Schubert variety in $\GL_{n+1}$. 
We can therefore assume that $G=\GL_{n+1}$ and $\tGr_{G,x}^{\leq l\cdot\om_1}=\Gr_{G,x}^{\leq l\cdot\om_1}$.

In this case, we consider the element $\bar{\la}=(l-1)\cdot \om_1+\om_2$. Then $\bar{\mu}-\bar{\la}$ is a simple coroot. By the Levi lemma of \cite[3.4]{MOV05}, the boundary of $\Gr_{G,x}^{\leq \bar{\la}}\subset \Gr_{G,x}^{\leq \bar{\mu}}$ is smoothly equivalent to the boundary of a Schubert variety for $\GL_2$, and hence is singular by the type $A_1$ case above. Therefore all the cases in this paragraph are excluded.

\noindent\textbf{Type $B_n$}, $n\geq 2$, and $\bar{\mu}$ quasi-minuscule: If $G$ is split, then $\tGr_{G,x}^{\leq \bar{\mu}}$ is singular, cf.~\cite[\S 2.9, 2.10]{MOV05}. If $G$ is not split, then according to Lemma \ref{dual_list} we are left with the cases ii.b), ii.c) for any $n\geq 2$. In case ii.b), the group $G$ is a quasi-split unitary group on an $2n+1$-dimensional Hermitian space. If $x$ is absolutely special, then $\tGr_{G,x}^{\leq \bar{\mu}}$ is singular at the base point by \cite[Thm 4.5, Lem 4.7]{P00} (cf.~also \cite[\S9, 3.b)]{HPR}). This case is therefore excluded. If $x$ is special, but not absolutely special, then $\tGr_{G,x}^{\leq \bar{\mu}}$ is smooth at the base point by \cite[Prop 4.16]{Arz09} (cf.~also \cite[\S9, 3.a)]{HPR}). This case is therefore not excluded.

In case ii.c), the group $G$ is a ramified orthogonal group on a $2n+2$-dimensional space. The normalized Schubert variety $\tGr_{G,x}^{\leq \bar{\mu}}$ is singular by \cite[\S9, 3.c)]{HPR}. Note that by Lemma \ref{conjugacy_lem} all special vertices are conjugate under $G(F)$ so that we only need to consider the choice of $x$ that is handled in \textit{loc.~cit.}. This case is excluded.\smallskip\\
\textbf{Type $C_3$}, and $\bar{\mu}=\omega_3$: If $G$ is split, then $G=\on{PSp}_6$. We have $\bar{\mu}=\om_3>\om_1=:\bar{\la}$, and $\bar{\la}$ is minuscule. Checking the tables in \cite{Bou}, we see that $\bar{\la}$ is equal to zero on the root subsystem $\on{supp}(\bar{\mu}-\bar{\la})= \{\al^\vee_2,\al^\vee_3\}$, viewing the latter as simple coroots in $\breve{\Sigma}^\vee$. 
Also by our general remarks above, we can identify the normalized Schubert variety with an ordinary Schubert variety in $\on{GSp}_{6}$. 
We can therefore assume that $G=\on{GSp}_{6}$ and $\tGr_{G,x}^{\leq \bar\mu}=\Gr_{G,x}^{\leq \bar\mu}$. By the Levi lemma of \cite[3.4]{MOV05}, the boundary of $\Gr_{G,x}^{\leq \bar{\la}}\subset \Gr_{G,x}^{\leq \bar{\mu}}$ is smoothly equivalent to the quasi-minuscule singularity of type $C_2=B_2$, which is singular by the previous case. This case is excluded. 

If $G$ is not split, then according to Lemma \ref{dual_list} we are in case ii.a) for $n=3$, i.e., $G$ is a quasi-split ramified unitary group on a $6$-dimensional Hermitian space. The normalized Schubert variety $\tGr_{G,x}^{\leq \bar{\mu}}$ is isomorphic to the special fiber of the local model of the unitary similitudes group for signature $(3,3)$, and this is singular by \cite[\S9, 2)]{HPR} (cf.~the equations given in \cite[(5.6)]{PR09}). This case is excluded. \smallskip\\
\textbf{Type $G_2$}, and $\bar{\mu}$ quasi-minuscule: If $G$ is split and $\mu$ is quasi-minuscule, then \cite[2.9]{MOV05} gives a conceptual proof showing that the base point $e$ in $\tGr_{G,x}^{\leq \bar{\mu}}$ is singular (this does not use the Kumar criterion). If $G$ is not split, then $G$ is the ``ramified triality'', and the base point is again singular. This is the most difficult case in our classification, and it is treated in \S\ref{Triality_Sec} below, cf.~Theorem \ref{Sing_Thm}. In both the split and non-split cases, we are using \S\ref{Reduction_Sec} for the reduction to the case $k=\bbC$. Thus, the normalized Schubert variety $\tGr_{G,x}^{\leq \bar{\mu}}$ is singular at the base point, and hence it is excluded. This finishes the proof of the classification, and hence the proof of Theorem \ref{sm_thm}. 

\subsection{A conjecture on minimal degenerations}\label{Conjecture_Sec} Theorem \ref{sm_thm} classifies the normalized Schubert varieties in twisted affine Grassmannians which are smooth. If the group $G$ is split and ${\rm char}(k)=0$, the result proven in \cite{EM99, MOV05} is stronger. In this case, every (normalized) Schubert variety is singular along its boundary, i.e., $\Gr_{G,x}^{\bar{\mu}} = \tGr_{G,x}^{\bar{\mu}}$ is exactly the smooth locus in $\Gr_{G,x}^{\leq \bar{\mu}} = \tGr_{G,x}^{\leq \bar{\mu}}$. As the phenomenon of exotic smoothness shows, this fails in twisted affine Grassmannians for general special vertices.

\begin{conject}\label{conject}
If $x$ is absolutely special, then the smooth locus of $\tGr_{G,x}^{\leq \bar{\mu}}$ is precisely 
$\tGr_{G,x}^{\bar{\mu}}$.
\end{conject}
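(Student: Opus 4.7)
Let $U \subset \tGr_{G,x}^{\leq \bar\mu}$ denote the smooth locus. Since smoothness is an open condition preserved by the $L^+\calG_x$-action, $U$ is $L^+\calG_x$-stable and hence a union of strata $\tGr_{G,x}^{\bar\la}$; clearly $\tGr_{G,x}^{\bar\mu} \subset U$. Because the singular locus is closed and $L^+\calG_x$-invariant, the conjecture reduces to the assertion that for every \emph{minimal} degeneration $\bar\la < \bar\mu$, the base point $x_{\bar\la}$ is singular in $\tGr_{G,x}^{\leq \bar\mu}$. One would then induct on the semisimple rank of $G$.

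The core of the plan is to establish a ramified analogue of the Levi lemma of \cite[\S3.4]{MOV05}: at any minimal degeneration $\bar\la < \bar\mu$, produce a transversal slice at $x_{\bar\la}$ inside $\tGr_{G,x}^{\leq \bar\mu}$ that is \'etale-locally equivalent to a neighborhood of the base point of $\Gr_{M,x_M}^{\leq \bar\mu'}$ for a suitable $F$-rational Levi subgroup $M \subset G$ containing $T$, a special vertex $x_M \in \scrB(M,F)$, and a dominant $\bar\mu' \in X_*(T)_I^+$ lying in the Weyl chamber of $M$. The natural candidate for $M$ is the Levi whose \'echelonnage roots lie in the span of $\on{supp}(\bar\mu - \bar\la)$, and the slice should be constructed via the ramified Beilinson--Drinfeld Grassmannian of \cite{Zhu15, Ri16}. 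A combinatorial analysis of minimal degenerations, paralleling that in \cite{MOV05} and using Lemma \ref{dual_list} to control the non-split cases, should show that $\bar\mu'$ is either quasi-minuscule for $M$, or (in finitely many type-$A$ configurations already present in the split case) a minuscule-type degeneration amenable to the direct analysis in \cite[\S2.10]{MOV05}.

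Granted this Levi reduction, the crucial input is that $x_M$ inherits absolute speciality from $x$. This should follow because absolute speciality is tested after base change to the splitting field $F'/F$: there, $M_{F'}$ is a Levi of the split group $G_{F'}$, and the image of $x$ in $\scrB(M_{F'}, F')$ is a hyperspecial vertex of this Levi building. Applying Theorem \ref{sm_thm} to the triple $(M,\bar\mu', x_M)$ then rules out exotic smoothness, since that phenomenon requires $x_M$ to be special but \emph{not} absolutely special in a ramified odd unitary group. Hence the quasi-minuscule Schubert variety $\Gr_{M,x_M}^{\leq \bar\mu'}$ is singular at its base point; transporting this singularity back along the slice yields the desired singularity of $\tGr_{G,x}^{\leq \bar\mu}$ at $x_{\bar\la}$, and the inductive step closes.

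The main obstacle is the construction of the twisted Levi slice: adapting the Beilinson--Drinfeld deformation to the ramified, parahoric setting so as to produce an honest \'etale isomorphism of henselizations (and not merely a match of tangent cones or of associated graded objects) is technically delicate, particularly in the absence of a naive deformation of $\calG_x$ over a curve with generic fiber a split model. A secondary subtlety is the case-by-case verification that $x_M$ is absolutely special in $\scrB(M,F)$, which requires an inspection of Tits' tables \cite{Ti77} to handle the interplay between the Galois action, Levi apartments, and parahoric structures, in particular for the ramified unitary groups and the triality case. Once these ingredients are settled, the conjecture follows from the slice reduction combined with Theorem \ref{sm_thm} and the inductive hypothesis.
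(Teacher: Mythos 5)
This statement is a \emph{conjecture} in the paper, not a theorem: the authors do not prove it and explicitly leave it open, writing just after the statement that ``the essential difficulty in proving the conjecture consists in handling absolutely simple, non-split groups over $\bbC\rpot{t}$.'' What the paper actually establishes (Corollary~\ref{conjecture.evidence.lem}) is only partial evidence: (i)~the reduction from general $k$ to $k=\bbC$ via the Witt-vector lift of Schubert varieties in \S\ref{Reduction_Sec}, and (ii)~the split case, which is exactly the theorem of Evens--Mirkovi\'c and Malkin--Ostrik--Vybornov cited as \cite{EM99, MOV05}. So there is no ``paper's own proof'' against which to compare.

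Your proposal is a sensible sketch of the natural strategy --- it is the twisted analogue of what \cite{MOV05} does in the split case --- but it is not a proof, and to your credit you say so. The step you single out as the ``main obstacle,'' a ramified Levi lemma producing an \'etale-local transversal slice at a minimal degeneration of the parahoric Schubert variety, is precisely the missing ingredient, and it is genuinely missing: in the split setting the Beilinson--Drinfeld Grassmannian over a curve provides the degeneration to a Levi slice, but for a twisted parahoric group $\calG_x$ there is currently no published construction of such a slice with enough control near the special fibre. Until that is supplied, the chain ``Levi reduction $\Rightarrow$ $x_M$ absolutely special $\Rightarrow$ apply Theorem~\ref{sm_thm} to $(M,\bar\mu',x_M)$ $\Rightarrow$ transport the singularity back'' is a plan, not an argument. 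The secondary point you flag --- that $x_M$ inherits absolute speciality --- also needs a real proof (one must check that the image of $x$ in $\scrA(M,T,F)$, suitably translated, is a vertex and special for $M$), and Theorem~\ref{sm_thm} only delivers ``not smooth somewhere,'' so one must also verify the singular point is the relevant base point of the slice (fine for the quasi-minuscule case since that Schubert variety has a single non-open stratum, but it must be said). In short: your strategy is the expected one and your diagnosis of the difficulties is accurate, but the proposal leaves open exactly what the paper leaves open.
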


In Corollary \ref{conjecture.evidence.lem} below we give some evidence for this conjecture.
The essential difficulty in proving the conjecture consists in handling absolutely simple, non-split groups over $\bbC\rpot{t}$.

\begin{rmk}
i) If Conjecture \ref{conject} holds for the normalized Schubert varieties, then the same is true for the non-normalized Schubert varieties as well. Indeed, the finite birational universal homeomorphism $\tGr_{G,x}^{\leq \bar{\mu}} \rightarrow  \Gr_{G,x}^{\leq \bar{\mu}}$ induces an isomorphism over the smooth locus of the target. \smallskip\\
ii) It would also be interesting to determine the type of singularities which arise. 
The calculations in \S\ref{Triality_Sec} indicate that these might be different from the minimal degeneration singularities for split groups.
\end{rmk}

\section{Reduction of the remaining cases to $k=\bbC$}\label{Reduction_Sec}
In order to treat the remaining cases in \S\ref{sec_unitary} and \S\ref{Triality_Sec} below, we first reduce the proof that the normalized Schubert varieties in question are singular to the case where $k=\bbC$. Let $W:=W(k)$ be the ring of Witt vectors of $k$ equipped with the natural map $W\to k$. Let $K=\on{Frac}(W)$ be the field of fractions. As the group $G$ is tamely ramified, the twisted affine Grassmannian together with the Schubert varieties lift to $W$, cf.~\cite[\S 7]{PR08}. 

More precisely, there exists a smooth, affine group scheme with connected fibers $\underline \calG\to W\pot{u}$ whose base change $\underline G$ to $W\rpot{u}$ is reductive, and whose base change to $\kappa\pot{u}$ for $\kappa=k, K$ is the parahoric group scheme for $\underline G_{\kappa\rpot{u}}$ attached with the ``same'' facet, cf.~\cite[Cor.~4.2 (2)]{PZ13}. 
We note that $\underline \calG/W\pot{u}$ is a special case of the ``parahoric'' group schemes constructed in \cite[\S4]{PZ13}.

Hence, as in \cite[\S 7]{PR08} there exists a twisted affine Grassmannian $\underline{\Gr}_{G,x}$ defined over $W$, and for every $\bar{\mu}\in X_*(T)_I^+$ a normalized Schubert variety $\underline{\tGr}^{\leq \bar{\mu}}_{G,x}$ such that 
\begin{equation}\label{Lift}
\underline{\tGr}^{\leq \bar{\mu}}_{G,x}\otimes k= \tGr^{\leq \bar{\mu}}_{G,x}.
\end{equation}
Note that we are using the identification $X_*(T)_I=X_*(\underline{T}_K)_{I_K}$ which is compatible with the dominance order where $\underline{T}$ is the lift of $T$ over $W$: this is an immediate consequence of the identification of apartments for $\underline G_{K\rpot{u}}$ and $\underline G_{k\rpot{u}}=G$ \cite[(4.2)]{PZ13}. Further, we use the fact that formation of $\underline{\tilde{\Gr}}_{G,x}^{\leq \bar{\mu}}$ commutes with base change. 
For simply connected groups this is proved in \cite[Prop.\,9.11 (a) and 9.g]{PR08}, and the case of adjoint groups  follows from this is by using a standard reduction to affine flag varieties and a translation to the neutral component as e.g.~in \cite[Prop.~3.1]{HRc}.
The affine Grassmannian in the generic fiber $\underline{\Gr}_{G,x}\otimes K$ is of the same type as the affine Grassmannian in the special fiber $\Gr_{G,x}$, so that $\underline{\tGr}_{G,x}^{\leq \bar{\mu}}\otimes K$ is the Schubert variety in $\underline{\Gr}^{\leq \bar{\mu}}_{G,x}\otimes K$ for the same $\bar{\mu}\in X_*(T)_I$ (using the normality of Schubert varieties in characteristic $0$). 

As the singular locus in $\underline{\tGr}_{G,x}^{\leq\bar{\mu}}$ is closed, the generic fiber $\underline{\tGr}_{G,x}^{\leq\bar{\mu}}\otimes K$ being singular implies the special fiber $\Gr_{G,x}^{\leq\bar{\mu}}$ is singular. 
We are therefore reduced to the case that $k$ is an algebraically closed field of characteristic zero. 
Furthermore, every such group is already defined over $\bar\bbQ\rpot{t}$ so that we further reduce to the case $k=\bbC$. 
We will assume this whenever convenient in what follows.

With a view towards Conjecture \ref{conject}, we note that if the smooth locus of the generic fibre $\underline{\tGr}_{G,x}^{\leq\bar{\mu}}\otimes K$ is precisely $\underline{\tGr}_{G,x}^{\bar{\mu}}\otimes K$, then the smooth locus of the special fibre $\underline{\tGr}_{G,x}^{\leq\bar{\mu}}\otimes k=\tGr_{G,x}^{\leq\bar{\mu}}$ is precisely $\tGr_{G,x}^{\bar{\mu}}$. Indeed, the formation of $\underline{\tGr}_{G,x}^{\bar{\lambda}}$ commutes with base change; and hence for a maximal element $\bar{\lambda} < \bar{\mu}$, any point in the $\bar{\lambda}$-stratum of $\underline{\tGr}_{G,x}^{\leq \bar{\mu}} \otimes k$ is a specialization of a point in the $\bar{\lambda}$-stratum of $\underline{\tGr}_{G,x}^{\leq \bar{\mu}} \otimes K$.
Invoking \cite{EM99, MOV05} we thus have proven:

\begin{cor}\label{conjecture.evidence.lem} 
i\textup{)} If Conjecture \ref{conject} holds for $k=\bbC$, then it holds for general fields $k$.\smallskip\\
ii\textup{)} If $G$ is split, then Conjecture \ref{conject} holds.
\end{cor}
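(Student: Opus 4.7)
The plan is to deduce both parts from the deformation-theoretic specialization argument sketched in the paragraph immediately preceding the corollary, combined with a base-change/Lefschetz descent in characteristic zero and the known classification results of \cite{EM99, MOV05}.

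For part (i), I would assume Conjecture \ref{conject} holds for $k=\bbC$ and consider the lift $\underline{\tGr}_{G,x}^{\leq \bar{\mu}}$ of $\tGr_{G,x}^{\leq \bar{\mu}}$ over $W=W(k)$, with generic fibre over $K=\on{Frac}(W)$. The first step is to promote the assumption over $\bbC$ to the analogous assertion over $K$: the twisted affine Grassmannian and its Schubert varieties are built from the Lie-theoretic data $(G,x,\bar{\mu})$, which descends to a root-theoretic and Galois-theoretic package over $\bar\bbQ\rpot{t}$. A standard embedding of $\bar{K}$ into $\bbC$, together with the fact that the smooth locus equalling the open stratum is stable under base change of algebraically closed fields of characteristic zero, transfers the conclusion from $\bbC$ to $K$. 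This yields that the smooth locus of $\underline{\tGr}_{G,x}^{\leq \bar{\mu}}\otimes K$ equals $\underline{\tGr}_{G,x}^{\bar{\mu}}\otimes K$.

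The second step would invoke the specialization argument verbatim. For each maximal $\bar{\lambda}<\bar{\mu}$, any $k$-point of the $\bar{\lambda}$-stratum of $\tGr_{G,x}^{\leq \bar{\mu}}$ lifts --- by flatness of $\underline{\tGr}_{G,x}^{\leq \bar{\lambda}}$ over $W$ and the fact that the formation of strata commutes with base change --- to a point of $\underline{\tGr}_{G,x}^{\bar{\lambda}}$ over a suitable extension of $K$, which by the first step is singular in the generic fibre $\underline{\tGr}_{G,x}^{\leq \bar{\mu}}\otimes K$. Openness of the smooth locus in the total $W$-scheme $\underline{\tGr}_{G,x}^{\leq \bar{\mu}}$ then forces the original $k$-point to be singular in the special fibre $\tGr_{G,x}^{\leq \bar{\mu}}$. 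The $L^+\calG$-equivariance of the stratification makes the whole $\bar{\lambda}$-stratum singular, and since the complement of $\tGr_{G,x}^{\bar{\mu}}$ is covered by the closures of the maximal-below strata while the singular locus is closed, this complement is contained in the singular locus, which together with smoothness of the top stratum gives part (i).

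For part (ii), the plan is immediate: when $G$ is split, Lemma \ref{conjugacy_lem} implies every special vertex is $G(F)$-conjugate to every other and is absolutely special. Over $k=\bbC$ the conjecture is known by Evens-Mirkovi\'c \cite{EM99} and Malkin-Ostrik-Vybornov \cite{MOV05}, and part (i) propagates it to an arbitrary algebraically closed $k$. The main obstacle, such as it is, lies in the two technical ingredients of part (i) --- the base-change transfer from $\bbC$ to $K$ and the openness-of-smoothness/specialization step on the $W$-scheme --- but both are entirely standard. The corollary is ultimately a bookkeeping consequence of work already carried out in the excerpt and in \cite{EM99, MOV05}; the genuine difficulty in the broader program, as the authors emphasize, lies in establishing the conjecture for non-split groups over $\bbC$, which is not addressed by this corollary.
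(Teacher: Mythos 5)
Your proposal follows the paper's own argument essentially verbatim: the paper proves part (i) via the same specialization-through-$W(k)$ plus commuting-of-strata-with-base-change argument given in the paragraph preceding the corollary, and part (ii) by combining this with \cite{EM99, MOV05}; your extra invocation of Lemma \ref{conjugacy_lem} in part (ii) is a harmless elaboration. One small caveat: the phrase ``a standard embedding of $\bar{K}$ into $\bbC$'' is not literally available when $k$ is uncountable, but the correct justification --- that the relevant data descends to $\bar\bbQ\rpot{t}$ and the smooth locus commutes with base change --- is already present in your argument and is exactly what the paper uses.
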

\qed

\section{The three-dimensional quasi-split ramified unitary groups} \label{sec_unitary}
Let $k$ be an algebraically closed field with $\on{char}(k) \neq 2$. Let $F=k\rpot{t}$, and let $F'/F$ be a quadratic ramified extension.  
Let $G=\on{PU}_3$, and let $x\in \scrB(G,F)$ be a special vertex. 
Up to conjugation by $G_\ad(F)$, there are two kinds of vertices: one is absolutely special, and one is special, but not absolutely special. The local Dynkin diagram of $G$ is:
\begin{equation}\label{C-BC_1}
\begin{aligned}
 \begin{tikzpicture}[scale=.4]
    \draw (0,0) node[anchor=east]  {$\textup{C-BC}_1$};
    \draw[xshift=1 cm,thin] (1 cm,0) circle (.1cm) node[above] {\tiny s} node[below] {\tiny 1};
    \draw[xshift=2 cm,thin] (2 cm,0) circle (.1cm) node[above] {\tiny as} node[below] {\tiny 0};
    \draw[xshift=1.15 cm, thin] (1.15 cm,.1cm) -- +(1.4 cm,0);
        \draw[xshift=1.15 cm, thin] (1.15 cm,.03cm) -- +(1.4 cm,0);
                        \draw[xshift=1.15 cm] (1.15 cm,0) +(1.4 cm,0) node[left] {$<$};
                \draw[xshift=1.15 cm, thin] (1.15 cm,-.03cm) -- +(1.4 cm,0);
        \draw[xshift=1.15 cm, thin] (1.15 cm,-.1cm) -- +(1.4 cm,0);
  \end{tikzpicture}
\end{aligned}  
\end{equation}
Note that $\Gr_{G,x}$ is connected with a linear order relation on the Schubert varieties, i.e., there are no minuscule Schubert varieties. Recall that the pair $(\bar{\mu},x)$ is called of exotic smoothness if $\bar{\mu}$ is quasi-minuscule, and $x$ is special, but not absolutely special, i.e., the vertex labeled $1$ in \eqref{C-BC_1}. The aim of this section is to prove the following proposition.

\begin{prop}\label{3dim_unitary}
Assume that $(\bar{\mu},x)$ is not of exotic smoothness. Then the smooth locus of $\tGr_{G,x}^{\leq \bar{\mu}}$ is exactly $\tGr_{G,x}^{\bar{\mu}}$. In particular, $\tGr_{G,x}^{\leq \bar{\mu}}$ is singular if $\bar{\mu}$ is non-trivial.  
\end{prop}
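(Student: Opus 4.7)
My plan is to reduce to $k=\bbC$ via \S\ref{Reduction_Sec}, and then exploit the very restrictive structure of $\on{PU}_3$. By Lemma \ref{dual_list} case ii.b) with $n=1$ together with Remark \ref{connected_rmk}, the dual group $(G^\vee)^I=\on{SO}_3$ is adjoint, hence $X_*(T)_I=\bbZ[\breve\Sigma^\vee]$ is the infinite cyclic group generated by the quasi-minuscule coweight $\bar\mu_0$. Every dominant $\bar\mu$ is therefore of the form $l\bar\mu_0$ for a unique $l\geq 0$, and the Schubert varieties form a totally ordered chain
\[
\{e\}\subsetneq \tGr_{G,x}^{\leq \bar\mu_0}\subsetneq \tGr_{G,x}^{\leq 2\bar\mu_0}\subsetneq \cdots.
\]
Because the smooth locus of $\tGr_{G,x}^{\leq l\bar\mu_0}$ is an $L^+\calG$-stable open subset, its complement is a closed $L^+\calG$-stable subset, hence equal to $\tGr_{G,x}^{\leq j_0\bar\mu_0}$ for some $-1\leq j_0<l$. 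The proposition is equivalent to the assertion that $j_0=l-1$ in every non-exotic case, which by $L^+\calG$-equivariance reduces to exhibiting one singular point in the penultimate stratum $\tGr_{G,x}^{(l-1)\bar\mu_0}$.

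First I would treat the base case $l=1$. For $x$ absolutely special, the singularity of $\tGr_{G,x}^{\leq\bar\mu_0}$ at $e$ is the $n=1$ instance of the local model computation for $\on{PU}_{2n+1}$ already cited in the proof of Theorem \ref{sm_thm}: after passing to the central extension $\tilde G=\on{GU}_3$ via Proposition \ref{Ad_Iso_prop} and identifying $\tGr_{\tilde G,x}^{\leq\bar\mu_0}$ with the naive local model of signature $(2,1)$ through \cite[Thm 9.1]{PZ13}, one reads singularity at the base point directly from \cite[Thm 4.5, Lem 4.7]{P00} and \cite[\S9, 3.b)]{HPR}. The case $x$ non-absolutely special is the exotic case covered by \cite[Prop 4.16]{Arz09}, and is excluded by hypothesis.

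For the inductive step $l\geq 2$ and either type of special vertex $x$, my strategy is a transverse slice argument at a point $p\in \tGr_{G,x}^{(l-1)\bar\mu_0}$. Taking an opposite semi-infinite orbit $S_p$ through $p$, in the spirit of the Levi lemma \cite[3.4]{MOV05} one should obtain an $L^+\calG$-equivariant identification of $S_p\cap \tGr_{G,x}^{\leq l\bar\mu_0}$ (a slice of dimension $\dim\tGr_{G,x}^{\leq l\bar\mu_0}-\dim\tGr_{G,x}^{(l-1)\bar\mu_0}$) with the formal neighborhood at the base point of $\tGr_{G,x'}^{\leq\bar\mu_0}$ for an adjacent special vertex $x'$. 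Combined with the $l=1$ analysis this slice is singular at its base point, which forces $p$ to be singular in $\tGr_{G,x}^{\leq l\bar\mu_0}$, and by $L^+\calG$-equivariance the whole stratum $\tGr_{G,x}^{(l-1)\bar\mu_0}$ lies in the singular locus.

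The main obstacle I expect is pinning down the vertex $x'$ appearing in the slice. For $x$ absolutely special the naive guess $x'=x$ is fine and the argument is direct; but for $x$ non-absolutely special the guess $x'=x$ would make the slice smooth by exotic smoothness, contradicting the desired conclusion, so $x'$ must \emph{toggle} to the absolutely special vertex. The two vertex types are adjacent in the local Dynkin diagram \eqref{C-BC_1}, which makes the toggling combinatorially plausible, but rigorously establishing it requires local Bruhat--Tits bookkeeping of the type carried out in \cite[\S4]{Arz09}. A computation-heavy alternative that sidesteps the slice identification is to pass to $\tilde G=\on{GU}_3$, realize $\tGr_{\tilde G,x}^{\leq l\bar\mu_0}$ explicitly as a closed subscheme of a product of classical Grassmannians via the selfdual respectively almost-modular lattice model of \cite[1.2]{PR09}, and verify via a direct Jacobian calculation at a generic point of $\tGr_{G,x}^{(l-1)\bar\mu_0}$ that the rank drops for every $l\geq 2$, in both lattice models. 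This reduces the proposition to a finite linear-algebra check parallel to the treatment of split $\PGL_2$ in \cite[\S5.1]{MOV05}.
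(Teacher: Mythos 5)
Your reduction of the statement to exhibiting a single singular point in the penultimate stratum (using that the singular locus is closed and $L^+\calG$-stable, and that the Schubert varieties form a chain) is correct and is also how the paper organizes the problem. However, the core of your argument --- the inductive step --- rests on a slice identification that is nowhere established and is precisely the hard content. The Levi lemma of \cite[3.4]{MOV05} is proved for split groups over $\bbC$, and no analogue for twisted affine Grassmannians is available in the literature you invoke; in particular the claim that the transverse slice to $\tGr_{G,x}^{(l-1)\bar\mu_0}$ inside $\tGr_{G,x}^{\leq l\bar\mu_0}$ is formally a neighborhood of the base point of the quasi-minuscule Schubert variety for an adjacent vertex $x'$, with the vertex type ``toggling'' in the non-absolutely special case, is a conjecture inferred from the desired conclusion rather than something you derive. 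Note that the paper deliberately avoids such identifications: the remark following Conjecture \ref{conject} cautions that the singularities arising in the twisted setting may differ from the split minimal-degeneration singularities, so importing split-case slice heuristics is exactly the step one cannot take for granted. Your fallback (an explicit Jacobian computation in the selfdual and almost modular lattice models for all $l\geq 2$) is only announced, not carried out, so the proposal as written has a genuine gap at its central step. There is also a smaller issue in your base case: the citations \cite[Thm 4.5, Lem 4.7]{P00} and \cite[\S9, 3.b)]{HPR} are used in the proof of Theorem \ref{sm_thm} for $\on{PU}_{2n+1}$ with $n\geq 2$; for $n=1$ the paper establishes singularity at $e$ for the absolutely special vertex exactly through the present proposition, so outsourcing the $l=1$ case to those references requires checking that they really cover the three-variable group.

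For comparison, the paper's route is different and unconditional: after reducing to $k=\bbC$ and to $G=\on{SU}_3$ via Proposition \ref{Ad_Iso_prop} and \S\ref{Reduction_Sec}, it pulls the problem back along the smooth projection $\pi\co\Fl\to\Gr_{G,x}$ to Iwahori--Schubert varieties $\Fl^{\leq w_{l,x}}$ (\S\ref{Aff_Flag}), and then applies Kumar's criterion (Theorem \ref{Kumar_thm}, available because twisted affine flag varieties of simply connected groups are Kac--Moody flag varieties) at a point $v_{l,x}$ over the penultimate stratum. The resulting rational-function identities are evaluated in closed form for all $l\geq 1$ simultaneously by a telescoping computation (\S\ref{Proof_Kumar}), which yields singularity in Case A for every $l$, and in Case B for every $l\geq 2$ with smoothness exactly at $l=1$ (exotic smoothness) --- no induction, no slice theorem, and no reliance on local model computations from the literature. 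If you want to salvage your approach, you would either have to prove the twisted slice/toggling statement by genuine Bruhat--Tits and group-theoretic analysis (in effect new content beyond \cite{Arz09}), or actually perform the lattice-model Jacobian computation for all $l$; as it stands, neither is done.
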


We may pass to working with $G = {\rm SU}_3$ thanks to Proposition \ref{Ad_Iso_prop}. 
Also, we use \S\ref{Reduction_Sec} to reduce the proof of Proposition \ref{3dim_unitary} to case where $k=\bbC$.
In this case, $\tGr_{G,x}^{\leq\bar{\mu}}=\Gr_{G,x}^{\leq\bar{\mu}}$ is the ordinary Schubert variety.  
By \cite[\S9.f]{PR08} the twisted affine flag varieties for simply connected groups agree with the Kac-Moody affine flag varieties so that Kumar's criterion \cite{Ku96} is applicable. This is a criterion for smoothness of Schubert varieties in terms of affine Weyl group combinatorics. We first reduce the proof of Proposition \ref{3dim_unitary} to Schubert varieties in the affine flag variety in \S\ref{Aff_Flag}, and then recall Kumar's criterion in \S\ref{Kumar_criterion} below. The final verifications are made in \S\ref{Proof_Kumar} below.

\subsection{Preliminaries on Schubert varieties}
 We write $F'=k\rpot{u}$ for a choice of uniformizer $u$ with $u^2=t$, and we fix a basis giving an isomorphism $V=(F')^3$ such that the Hermitian form is given by the anti-diagonal matrix $\on{antidiag}(1,1,1)$. With respect to this basis, we let $T\subset G$ be the diagonal torus, and $B\subset G$ the Borel subgroup of upper triangular matrices. 

We define the $\calO_{F'}$-lattices $\La_0:=\calO_{F'}^3$ and $\La_1:=u^{-1}\calO_{F'}\oplus \calO_{F'}^2$. Up to conjugation by $\on{SU}_3(F)$, the vertex $x$ corresponds either to the absolutely special vertex given by the selfdual lattice $\La_0$, or to the special, but not absolutely special vertex given by the almost modular lattice $\La_1$, cf.~\eqref{C-BC_n}. We fix the base alcove $\bba$ which corresponds to the Iwahori subgroup in $\on{SU}_3(F)$ given by the stabilizer of the lattice chain $\La_0\subset \La_1$.

Observe that $(G^\vee)^I$ is an adjoint group (cf.\,Proposition \ref{fix_pt_prop}) and so $X_*(T)_I = X^*((T^\vee)^I)$ is generated by $\breve{\Sigma}^\vee$.  Write $e_1= \bar{\mu}_1$ for the simple \'{e}chelonnage coroot. In this way we identify $X_*(T)_I^+=\bbZ_{\geq 0}$, and we denote by $\bar{\mu}_{l}\in X_*(T)_I^+$ the element which corresponds to $l \in \bbZ_{\geq 0}$. Explicitly, $\bar{\mu}_{l}$ is under the Kottwitz map given by the class of the diagonal matrix $\on{diag}(u^l,1,(-u)^{-l})\in T(F)$. As closed subschemes in the affine Grassmannian we have
\[
\{e\}=\Gr_{G,x}^{\leq \bar{\mu}_0}\subset \Gr_{G,x}^{\leq \bar{\mu}_1}\subset \Gr_{G,x}^{\leq \bar{\mu}_2}\subset\ldots,
\]
and $\Gr_{G,x}^{\leq \bar{\mu}_{l}}\bslash \Gr_{G,x}^{\leq \bar{\mu}_{l-1}}=\Gr_{G,x}^{\bar{\mu}_{l}}$ which is of dimension $2l$ (because the base alcove corresponds to the interval $(0,\frac{1}{2})$, see below). The element $\bar{\mu}_1$ is the unique quasi-minuscule element.

\subsection{Reduction to the affine flag variety}\label{Aff_Flag}
We consider the Iwahori $\calO_F$-group scheme $\calG_\bba$ given by the automorphisms of the lattice chain $\La_0\subset \La_1$, and denote by $\Fl:=LG/L^+\calG_\bba$ the associated twisted affine flag variety in the sense of \cite[1.c]{PR08}. Let $W=W(G,T,F)$ be the affine Weyl group. For each $w\in W$, we denote by $\Fl^{\leq w}\subset \Fl$ the $L^+\calG_\bba$-Schubert variety associated with the base point $n_w\in \Fl(k)$ corresponding to $w$, cf.~\cite[\S8]{PR08}.  

The canonical projection $\pi\co \Fl\,\to\, \Gr_{G,x}$ is representable by a smooth proper surjective morphism of relative dimension $1$, cf.~\cite[Lem 4.9 i)]{HRa}. Thus, for each $l\geq 1$ there exists a unique element $w_{l,x}\in W$ such that as subschemes of the affine flag variety
\[
\Fl^{\leq w_{l,x}}\,=\,\pi^{-1}(\Gr_{G,x}^{\leq \bar{\mu}_l}).
\]
Since the projection $\Fl^{\leq w_{l,x}}\to \Gr_{G,x}^{\leq \bar{\mu}_l}$ is smooth, to show $\on{Gr}^{\leq \bar{\mu}_l}_{G, x}$ is singular it is enough to show that $\Fl^{\leq w_{l,x}}$ is singular at a point $v_{l,x}$ lying over $\bar{\mu}_{l-1}$. We need to explicate the elements $w_{l,x}$ in terms of the affine Weyl group $W$, and need to make suitable choices for $v_{l,x}$.

\subsubsection{Affine Roots} We have the perfect pairing $\lan\str,\str\ran\co X^*(T)^I_\bbR\times X_*(T)_{I,\bbR}\to \bbR$ of $1$-dimensional $\bbR$-vector spaces. Let $\epsilon_1 \in X^*(T)^I_{\mathbb R}$ be such that $\langle \epsilon_1, e_1\rangle = 1$. The set of affine roots $\Phi_\aff=\Phi_\aff(G,T,F)$ is given by 
\[
\Phi_\aff\,=\,\{\pm\epsilon_1+\bbZ;\; \pm 2\epsilon_1+\bbZ\}.
\]
It follows that the simple affine roots are $\al_1=\epsilon_1$, $\al_0=-2\epsilon_1+1$, and the simple \'{e}chelonnage root is $\alpha_{\rm ech} = 2\epsilon_1$. These roots have coroots $\al_1^\vee=2e_1$, $\al_0^\vee=-e_1+{1\over 2}$, and $\alpha_{\rm ech}^\vee = e_1 = \bar{\mu}_1$.  Note this is consistent with our description of $\bar{\mu}_1$ above. The base alcove $\bba$ is the open interval $(0,{1\over 2})\subset \bbR$.  In this notation, we have $\bar{\mu}_l = le_1 = l\alpha^\vee_{\rm ech}$.

\subsubsection{Simple reflections} The affine Weyl group $W$ has a Coxeter group structure given by the choice of the base alcove $\bba$. We denote by $\leq$ the partial order, and by $\ell(w)\in\bbZ_{\geq 0}$ the length of an element $w\in W$. We let $s_0:=s_{\al_0}$ be the simple affine reflection given by $\al_0$, and we let $s_1:=s_{\al_1}$ be the simple reflection given by $\al_1$. We have the group presentation
\[
W\,=\, \lan s_0,s_1\;|\;s_0^2=s_1^2=1\ran. 
\]
The group $W$ acts on $\Phi_\aff$. We have $s_i(\al_i)=-\al_i$ for $i=0,1$, and 
\begin{align*}
s_0(\al_1)\,&=\, \al_1-\lan\al_1,\al_0^\vee\ran\al_0\,=\,\al_1+\al_0,\\
s_1(\al_0)\,&=\, \al_0- \lan\al_0,\al_1^\vee\ran\al_1\,=\,\al_0+4\al_1.
\end{align*}
Translation by $\bar{\mu}$ takes the base alcove ${\bf a} = (0, {1\over 2})$ to the interval $(1, {3 \over 2})$. Therefore, for $l \geq 1$, translation by $\bar{\mu}_l$ is an element in $W$ with reduced expression $(s_0s_1)^l$. In what follows we will abbreviate such expressions.

\subsubsection{Cases}\label{Cases_sec} We extend the definition to every $i\in\bbZ$ by $s_i:=s_0$ (resp.~$\al_i:=\al_0$) if $i$ is even, and $s_i:=s_1$ (resp.~$\al_i:=\al_1$) if $i$ is odd. Further, for every pair $i,j\in \bbZ$, we define $s_{i,j}:=s_is_{i+1}\cdots s_j$ if $i\leq j$ and $s_{i,j}:=1$ if $i>j$. Fix $l\geq 1$ and consider the following two cases.\smallskip\\
\textbf{Case A.} Let $x$ be the absolutely special vertex given by the selfdual lattice $\La_0$. Then $\calG_x(\calO_F)$ contains the affine root groups given by $\pm\al_1$. In this case, we have $w_{l,x}=s_{1,2l+1}$, and we fix this reduced expression. We define $v_{l,x}:=s_{1,2l-1}$. \smallskip\\
\textbf{Case B.} Let $x$ be special, but not absolutely special vertex given by the almost modular lattice $\La_1$. Then $\calG_x(\calO_F)$ contains the affine root groups given by $\pm\al_0$. In this case, we have $w_{l,x}=s_{0,2l}$, and we fix this reduced expression. We define $v_{l,x}:=s_{0,2l-2}$, i.e., the roles of $0$ and $1$ are interchanged.

\subsection{Kumar's criterion}\label{Kumar_criterion} From now on assume that $k=\bbC$. Then $\Fl$ is the Kac-Moody affine flag variety (cf.~\cite{Ku96}) associated with the generalized Cartan matrix of rank two given by
\[
\begin{pmatrix} \lan\al_0,\al_0^\vee\ran &  \lan\al_0,\al_1^\vee\ran\\
 \lan\al_1,\al_0^\vee\ran &  \lan\al_1,\al_1^\vee\ran
\end{pmatrix}
= 
\begin{pmatrix} 2 & -4\\
-1 & 2
\end{pmatrix}.
\] 

Let $Q$ be the fraction field of the symmetric algebra of the root lattice. Let $w\in W$, and fix a reduced decomposition $w=s_{1}\cdot\ldots\cdot s_{n}$. For $v\leq w$, we define
\begin{equation}\label{Kumar_def}
e_vX(w)\defined \sum\prod_{i=1}^n\tilde{s}_1\cdots \tilde{s}_i (\al_i)^{-1}\,\in\, Q,
\end{equation}
where the sum runs over all sequences $(\tilde{s}_1,\ldots,\tilde{s}_n)$ such that either $\tilde{s}_i=1$ or $\tilde{s}_i=s_i$ for every $i$, and $\tilde{s}_1\cdot \ldots\cdot \tilde{s}_n=v$. The following theorem is \cite[Thm 5.5 (b); Thm 8.9]{Ku96}.

\begin{thm}\label{Kumar_thm}
The Schubert variety $\Fl^{\leq w}$ is smooth at $v$ if and only if 
\begin{equation}\label{Kumar_eq}
e_vX(w)\,=\, (-1)^{\ell(v)}\prod_{\al\in \Phi^+_\aff;\, s_\al v\leq w}\al^{-1},
\end{equation}
where $\Phi^+_\aff\subset \Phi_\aff$ is the set of positive affine roots, and $s_\al$ denotes the associated reflection.
\end{thm}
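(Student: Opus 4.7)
The plan is to prove Kumar's criterion by passing to $T$-equivariant cohomology of the Kac-Moody flag variety $\Fl$, where $T$ is the maximal torus of $G$. The quantity $e_v X(w) \in Q$ will be identified with the equivariant multiplicity (in the sense of Joseph--Rossmann) of the Schubert variety $\Fl^{\leq w}$ at the $T$-fixed point $v$. The right-hand side of \eqref{Kumar_eq} will be identified with the equivariant multiplicity that one obtains when $\Fl^{\leq w}$ is smooth at $v$, so equality becomes the criterion for smoothness.

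First, I would establish the formula \eqref{Kumar_def} as a localization of a pushforward. Take the Bott--Samelson resolution $\pi\co Z \to \Fl^{\leq w}$ associated with the fixed reduced expression $w = s_1 \cdots s_n$. The Bott--Samelson $Z$ is smooth and projective, admits a $T$-action, and has isolated fixed points $z_{\tilde{s}_\bullet}$ indexed by binary strings $(\tilde{s}_1, \ldots, \tilde{s}_n)$ with $\tilde{s}_i \in \{1, s_i\}$; the tangent weights at $z_{\tilde{s}_\bullet}$ are explicitly $\tilde{s}_1 \cdots \tilde{s}_{i-1}(\alpha_i)$ for $1 \leq i \leq n$. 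The Atiyah--Bott--Berline--Vergne localization formula, applied to $\pi_*[Z]$ in $T$-equivariant Borel--Moore homology (or intersection cohomology) and evaluated at the fixed point $v \in \Fl^{\leq w}$, gives exactly the sum (\ref{Kumar_def}), where the summation is over those strings with $\tilde{s}_1 \cdots \tilde{s}_n = v$. Since $\pi$ is birational onto $\Fl^{\leq w}$, this computes the equivariant multiplicity $e_v[\Fl^{\leq w}]$.

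Second, I would identify the right-hand side of \eqref{Kumar_eq} as the equivariant multiplicity of a smooth $T$-stable subvariety at $v$. The key geometric input is the combinatorial description of the $T$-fixed curves in $\Fl^{\leq w}$ passing through $v$: they are Schubert curves corresponding to reflections $s_\alpha$ with $\alpha \in \Phi^+_\aff$ and $s_\alpha v \leq w$, each contributing weight $-\alpha$ to the Zariski tangent space $T_v \Fl^{\leq w}$. If $\Fl^{\leq w}$ is smooth at $v$, its tangent space decomposes into exactly these weights, and the equivariant multiplicity of a smooth $T$-variety at an isolated fixed point equals the product of the inverses of the tangent weights, yielding the right-hand side (up to the sign $(-1)^{\ell(v)}$ which tracks orientations from negative roots).

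Third, and this is the main obstacle, I would prove the sharpness: the equality of equivariant multiplicities \emph{forces} smoothness. The equivariant multiplicity $e_v[\Fl^{\leq w}]$ is the leading term of the $T$-equivariant Hilbert series of the local ring $\widehat{\calO}_{\Fl^{\leq w}, v}$ regarded as a graded $T$-representation. For any $T$-stable subscheme $X \subset Y$ at an isolated fixed point $v$, one has an inequality governing the tangent cone: the weights of $T_v X$ contain (with multiplicity) the weights of $C_v X$, and $e_v[X] = \prod_i \chi_i^{-1}$ for the tangent weights $\chi_i$ if and only if the tangent cone coincides with the Zariski tangent space, i.e., $X$ is smooth at $v$. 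This criterion, combined with the Joseph--Rossmann formalism for equivariant multiplicities in Kac--Moody settings, completes the argument. The nontrivial step here is extending these finite-dimensional results to the infinite-dimensional $\Fl$, which is achieved by working inside a finite-dimensional $T$-invariant neighborhood of $v$ cut out of a suitable Bott--Samelson variety dominating $\Fl^{\leq w}$.
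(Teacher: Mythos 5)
The paper does not prove this statement: it is quoted verbatim from Kumar's paper, the reference being \cite[Thm.\,5.5(b); Thm.\,8.9]{Ku96}, and the \verb|\qed| after the statement signals that no proof is given here. So there is no ``paper's own proof'' to compare against; what you have produced is an independent sketch of Kumar's theorem.

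That said, a few remarks are in order. Your reduction of $e_vX(w)$ to an Atiyah--Bott localization of the pushforward from the Bott--Samelson is the correct mechanism and is essentially what underlies Kumar's geometric Theorem~8.9. But be careful with a sign convention: you write the tangent weight of the $i$-th factor at $z_{\tilde s_\bullet}$ as $\tilde s_1\cdots\tilde s_{i-1}(\alpha_i)$, whereas the definition \eqref{Kumar_def} uses $\tilde s_1\cdots\tilde s_{i}(\alpha_i)$, which differs by a sign whenever $\tilde s_i=s_i$. The total sign discrepancy is $(-1)^{\#\{i:\tilde s_i=s_i\}}$, which has parity $(-1)^{\ell(v)}$; you must track this consistently or the sign in \eqref{Kumar_eq} will not come out right.

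More importantly, the step you label ``the main obstacle'' is not dispatched by your argument as written, and it is in fact the entire content of Kumar's theorem. Two distinct gaps remain. First, you assert that when $\Fl^{\leq w}$ is smooth at $v$ its tangent weights are exactly the $-\alpha$ with $\alpha\in\Phi^+_{\rm aff}$ and $s_\alpha v\leq w$. This requires a Carrell--Peterson/Deodhar-type input: one must know both that $T$-stable curves through $v$ inside $\Fl^{\leq w}$ biject with this set of reflections, \emph{and} that the number of such reflections is always $\geq\ell(w)$, with equality forcing the curve directions to exhaust $T_v\Fl^{\leq w}$ in the smooth case. This is not automatic and needs proof in the Kac--Moody setting. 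Second, the sharpness direction --- that equality in \eqref{Kumar_eq} \emph{forces} smoothness --- is not a consequence of a generic tangent cone inequality. The tangent weights of $T_v\Fl^{\leq w}$ are not a priori known, so one cannot directly compare $e_v[\Fl^{\leq w}]$ to the product of their inverses. What Kumar actually proves (via a careful analysis in the nil Hecke ring $Q_W$ and the equivariant degree of divided-difference operators) is that $e_vX(w)\cdot\prod_{s_\alpha v\leq w}\alpha$ is a polynomial $b\in S=\on{Sym}(\frakh^*)$ with constant term $b(0)\geq 1$, and that $b(0)=1$ if and only if $\Fl^{\leq w}$ is smooth at $v$; only after this is established can one conclude as you wish. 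Invoking ``the Joseph--Rossmann formalism'' for this step is not a proof but a pointer to the same technical core. In short: your outline is the right geometric re-interpretation of Kumar's theorem, but the heart of the argument --- the positivity/degree analysis guaranteeing sharpness --- is absent.
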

\qed

\subsection{End of the proof}\label{Proof_Kumar} By Theorem \ref{Kumar_thm}, we need to calculate the expression $e_{v_{l,x}}X(w_{l,x})$ in both cases A and B. Note that there are $2l$ subexpressions of $v_{l,x}$ in $w_{l,x}$ defined by deleting two neighboring simple reflections, and that all subexpressions are of this form. We use the notation introduced in \S\ref{Cases_sec}.\smallskip\\
\textit{Case A:} An elementary calculation gives
\begin{equation}\label{Abs_Spec_Calc_A}
e_{v_{l,x}}X(w_{l,x})=\left(\prod_{i=1}^{2l-1}s_{1,i}(\al_i)^{-1}\right)\cdot \underbrace{\left(\sum_{i=0}^{2l-1}s_{1,i}(\al_0)^{-1}s_{1,i}(\al_1)^{-1}\right)}_{=:A_{l}}.
\end{equation}
\textit{Case B:} The same calculation as before gives
\begin{equation}\label{Abs_Spec_Calc_B}
e_{v_{l,x}}X(w_{l,x})=\left(\prod_{i=0}^{2l-2}s_{0,i}(\al_i)^{-1}\right)\cdot \underbrace{\left(\sum_{i=0}^{2l-1}s_{0,i-1}(\al_0)^{-1}s_{0,i-1}(\al_1)^{-1} \right)}_{=:B_{l}}.
\end{equation}

\begin{cor}\label{red_cor_sm}
The Schubert variety $\Fl^{\leq w_{l,x}}$ is smooth at $v_{l,x}$ if and only if 
$$
A_l=-\al_0^{-1}s_{1,2l}(\al_0)^{-1} = \al_0^{-1} s_{1,2l-1}(\al_0)^{-1} \hspace{.15in} {\mbox resp.,}\hspace{.15in}B_l= -\al_{1}^{-1}s_{0,2l-1}(\al_1)^{-1} = \al_1^{-1}s_{0,2l-2}(\al_1)^{-1}. 
$$
\end{cor}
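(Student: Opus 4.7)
The Cartan matrix $\begin{pmatrix} 2 & -4 \\ -1 & 2\end{pmatrix}$ recorded in \S\ref{Kumar_criterion} has off-diagonal product $4$, so $s_0 s_1$ has infinite order and $W=\langle s_0,s_1\rangle$ is the infinite dihedral Coxeter group. In particular, every element of $W$ has a unique reduced word, the Bruhat order is determined by length once an initial letter is fixed, and for $u,v\in W$ the product $uv^{-1}$ is a reflection if and only if it has order $\leq 2$. I shall use these facts throughout.

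By Theorem \ref{Kumar_thm}, the Schubert variety $\Fl^{\leq w_{l,x}}$ is smooth at $v_{l,x}$ if and only if the quantity $e_{v_{l,x}}X(w_{l,x})$ computed in \eqref{Abs_Spec_Calc_A}, resp.\ \eqref{Abs_Spec_Calc_B}, equals
$$C(w_{l,x},v_{l,x}):=(-1)^{\ell(v_{l,x})}\prod_{\alpha>0,\,s_\alpha v_{l,x}\leq w_{l,x}}\alpha^{-1}.$$
The plan is to split the index set into inversions of $v_{l,x}$ and a small ``extra'' set $E$, match the inversion part with the explicit prefactor appearing in front of $A_l$ (resp.\ $B_l$), and then identify $E$ by an elementary analysis of the interval $[v_{l,x},w_{l,x}]$.

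For Case A, the inversions of $v_{l,x}=s_{1,2l-1}$ are $\{s_{1,i-1}(\alpha_i):1\leq i\leq 2l-1\}$, and the identity $s_{1,i}(\alpha_i)=s_{1,i-1}(-\alpha_i)=-s_{1,i-1}(\alpha_i)$ (which I shall use repeatedly) gives
$$\prod_{i=1}^{2l-1}s_{1,i}(\alpha_i)^{-1}=(-1)^{\ell(v_{l,x})}\prod_{\alpha\in\mathrm{inv}(v_{l,x})}\alpha^{-1}.$$
The sign matches the sign in $C(w_{l,x},v_{l,x})$, so smoothness at $v_{l,x}$ is equivalent to $A_l=\prod_{\alpha\in E}\alpha^{-1}$, where $E:=\{\alpha>0:v_{l,x}<s_\alpha v_{l,x}\leq w_{l,x}\}$. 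Since $w_{l,x}=v_{l,x}\cdot s_0 s_1$ with $\ell(w_{l,x})-\ell(v_{l,x})=2$, the condition $s_\alpha v_{l,x}\leq w_{l,x}$ forces $\ell(s_\alpha v_{l,x})\leq\ell(v_{l,x})+2$. In the infinite dihedral $W$, the interval $[v_{l,x},w_{l,x}]$ consists of the four elements of length $2l-1,2l,2l,2l+1$ that share the initial letter $s_1$ of $w_{l,x}$; concretely these are $v_{l,x}$, $s_0 v_{l,x}$, $v_{l,x}s_0$, and $w_{l,x}$. There is no reflection carrying $v_{l,x}$ to $w_{l,x}$ because $w_{l,x}v_{l,x}^{-1}=v_{l,x}(s_0 s_1)v_{l,x}^{-1}$ has infinite order. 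Hence $E$ consists of the two positive roots $\alpha_0$ (sending $v_{l,x}\mapsto s_0 v_{l,x}$) and $v_{l,x}(\alpha_0)=s_{1,2l-1}(\alpha_0)$ (sending $v_{l,x}\mapsto v_{l,x}s_0$); these are distinct because $W$ has trivial centre and $v_{l,x}\neq 1$, and both are positive because $v_{l,x}s_0>v_{l,x}$. This yields $A_l=\alpha_0^{-1}\,s_{1,2l-1}(\alpha_0)^{-1}$, and the alternative form $-\alpha_0^{-1}\,s_{1,2l}(\alpha_0)^{-1}$ is immediate from $s_{1,2l}(\alpha_0)=s_{1,2l-1}(-\alpha_0)=-s_{1,2l-1}(\alpha_0)$. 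Case B is entirely analogous with the roles of $\alpha_0$ and $\alpha_1$ interchanged, giving the extras $\alpha_1$ and $s_{0,2l-2}(\alpha_1)$.

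The main obstacle is the combinatorial verification that the interval $[v_{l,x},w_{l,x}]$ has exactly the claimed four elements and that no ``far-away'' reflection sneaks into $E$; both points are settled by the infinite-dihedral structure of $W$ together with the observation that $v_{l,x}s_0 s_1 v_{l,x}^{-1}$ is not a reflection.
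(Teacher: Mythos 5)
Your argument is correct and is essentially the same as the paper's: both compute the right-hand side of Kumar's criterion \eqref{Kumar_eq} as the product of the inversion set of $v_{l,x}$ (which cancels against the prefactor in \eqref{Abs_Spec_Calc_A}, resp.\ \eqref{Abs_Spec_Calc_B}) times the two ``extra'' positive roots $\alpha_0$ and $v_{l,x}(\alpha_0)$ (resp.\ $\alpha_1$ and $v_{l,x}(\alpha_1)$); the paper just states the resulting product $-\prod_{i=0}^{2l}s_{1,i}(\alpha_i)^{-1}$ and asks the reader to compare, whereas you spell out why the set $\{\alpha>0:s_\alpha v_{l,x}\leq w_{l,x}\}$ is what it is using the infinite-dihedral structure of $W$. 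Two small slips in the write-up, neither of which affects correctness: the four elements of $[v_{l,x},w_{l,x}]$ do \emph{not} all ``share the initial letter $s_1$'' (since $s_0v_{l,x}$ starts with $s_0$) --- what you should say is that in the infinite dihedral group every element of strictly smaller length lies below in Bruhat order, so the interval is $v_{l,x}$, the two length-$2l$ elements, and $w_{l,x}$; and the distinctness $\alpha_0\neq v_{l,x}(\alpha_0)$ uses that the centralizer of $s_0$ in $W$ is $\{1,s_0\}$ rather than triviality of the centre as such. You could also shorten the argument that $w_{l,x}\neq s_\alpha v_{l,x}$: this is automatic by parity of lengths, since $\ell(s_\alpha v_{l,x})$ is even while $\ell(w_{l,x})=2l+1$ is odd, making the infinite-order observation superfluous.
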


\begin{proof} The right hand side in \eqref{Kumar_eq} takes the form
\[
\text{Case A:}\;\;-\prod_{i=0}^{2l}s_{1,i-1}(\al_i)^{-1}=-\prod_{i=0}^{2l}s_{1,i}(\al_i)^{-1}\;\;\;\;\;\;\;\;\;\;\text{Case B:}\;\; -\prod_{i=-1}^{2l-1}s_{0,i-1}(\al_i)^{-1}=-\prod_{i=-1}^{2l-1}s_{0,i}(\al_i)^{-1}.
\]
Here we used $s_i(\al_i)=-\al_i$ and the conventions $s_{1,-2}=s_{1,-1}=s_{1,0}=1$ to keep track of the signs. By comparing this with \eqref{Abs_Spec_Calc_A} (resp.~\eqref{Abs_Spec_Calc_B}), the corollary follows from Theorem \ref{Kumar_thm}. 
\end{proof}

We now need to calculate $A_l$ and $B_l$ for every $l\geq 1$. The following identities are useful.

\begin{lem}\label{useful_lem} For all $i\in \bbZ_{\geq 0}$, one has \smallskip\\
i\textup{)} $\al_0+s_1(\al_0)=2$, and $s_{1,2i}(\al_0)=\al_0-2i$, and $s_{1,2i+1}(\al_0)=s_1(\al_0)+2i$;\smallskip\\
ii\textup{)} $\al_1+s_0(\al_1)=1$, and $s_{0,2i-1}(\al_1)=\al_1-i$, and $s_{0,2i}(\al_1)= s_0(\al_1)+i$. 
\end{lem}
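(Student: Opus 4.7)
The identities amount to a short bookkeeping exercise once one realizes that the affine Weyl group $W$ acts concretely on the apartment $\scrA = \bbR$ with $s_1$ the reflection at $0$ and $s_0$ the reflection at $1/2$. The plan is to first dispatch the two sum identities by direct substitution, then identify the long alternating products with translations and compute.

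First I would verify $\al_0 + s_1(\al_0) = 2$ and $\al_1 + s_0(\al_1) = 1$ using the formulas $s_1(\al_0) = \al_0 + 4\al_1$ and $s_0(\al_1) = \al_1 + \al_0$ recorded in the paragraph preceding Theorem \ref{Kumar_thm}, together with $\al_0 = -2\epsilon_1 + 1$ and $\al_1 = \epsilon_1$: each identity reduces to a one-line calculation, e.g.\ $\al_0 + s_1(\al_0) = 2\al_0 + 4\al_1 = -4\epsilon_1 + 2 + 4\epsilon_1 = 2$.

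For the remaining four identities the key observation is that $s_1 s_0$ and $s_0 s_1$ act on $\scrA = \bbR$ as translations by $-1$ and $+1$ respectively, as one checks immediately from $s_1(x) = -x$ and $s_0(x) = 1-x$. Hence $s_{1,2i} = (s_1 s_0)^i$ is translation by $-i$ and $s_{0,2i-1} = (s_0 s_1)^i$ is translation by $+i$. A translation $T_t$ acts on affine functionals by the rule $T_t(c\epsilon_1 + d) = c\epsilon_1 + (d - ct)$. Applying $T_{-i}$ to $\al_0 = -2\epsilon_1 + 1$ yields $\al_0 - 2i$, which is the middle identity of i); applying $T_i$ to $\al_1 = \epsilon_1$ yields $\al_1 - i$, which is the middle identity of ii).

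Finally, I would deduce the odd-length identities by factoring $s_{1,2i+1} = s_{1,2i}\cdot s_{2i+1} = T_{-i}\cdot s_1$ and $s_{0,2i} = s_{0,2i-1}\cdot s_{2i} = T_i \cdot s_0$, then applying the respective translation to $s_1(\al_0) = 2\epsilon_1 + 1$ and $s_0(\al_1) = -\epsilon_1 + 1$. The translation rule gives $T_{-i}(2\epsilon_1+1) = 2\epsilon_1 + 1 + 2i = s_1(\al_0) + 2i$ and $T_i(-\epsilon_1+1) = -\epsilon_1 + 1 + i = s_0(\al_1) + i$. Since no root-theoretic subtlety is involved beyond the translation formula, I anticipate no serious obstacle; the entire lemma is purely a computation inside $\mathrm{Aff}(\bbR)$.
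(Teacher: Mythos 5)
Your proof is correct. The paper's proof is a one-liner: it verifies the two key identities $s_1(\al_0)=2-\al_0$ and $s_0(\al_1)=1-\al_1$ and leaves the rest as an ``easy induction''; the intended induction alternately applies $s_1(\al_0)=2-\al_0$ and $s_0(\al_0)=-\al_0$ (resp.\ $s_0(\al_1)=1-\al_1$ and $s_1(\al_1)=-\al_1$) to pass from one formula to the next. You take a genuinely different route: you recognize the even- and odd-length alternating words $s_{1,2i}=(s_1s_0)^i$ and $s_{0,2i-1}=(s_0s_1)^i$ as translations $T_{-i}$ and $T_{+i}$ on the apartment $\scrA\cong\bbR$, and then compute their contragredient action on the affine functionals $\al_0=-2\epsilon_1+1$ and $\al_1=\epsilon_1$ directly via $T_t(c\epsilon_1+d)=c\epsilon_1+(d-ct)$. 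This eliminates the induction entirely and makes the linear-in-$i$ form of the answer transparent at a glance; the tradeoff is that it exploits the rank-one geometry quite specifically, whereas the paper's inductive scaffolding would adapt more readily to higher-rank Kumar computations. Both arguments are equally rigorous and comparably short; there is no gap in what you wrote.
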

\begin{proof} We have $s_1(\al_0)=\al_0+4\al_1=2-\al_0$, and $s_0(\al_1)=\al_0+\al_1=1-\al_1$. The remaining identities are proved by an easy induction, and are left to the reader.  
\end{proof}

\noindent\textit{Case A:} The number of summands in $A_l$ is even, and we add terms in consecutive pairs as follows. For every $0\leq i\leq l-1$, one has
\[
s_{1,2i}(\al_0)^{-1}s_{1,2i}(\al_1)^{-1}+ s_{1,2i+1}(\al_0)^{-1}s_{1,2i+1}(\al_1)^{-1} \,=\, 4\cdot s_{1,2i}(\al_0)^{-1}s_{1,2i+1}(\al_0)^{-1},
\]
where we used $s_{1,2i+1}(\al_1)=-s_{1,2i}(\al_1)$ and $s_{1,2i+1}(\al_0)=s_{1,2i}(\al_0)+4s_{1,2i}(\al_1)$. This shows
\[
A_l\,=\, 4\cdot \sum_{i=0}^{l-1}s_{1,2i}(\al_0)^{-1}s_{1,2i+1}(\al_0)^{-1}\,=\, 4l\cdot \al_0^{-1}s_{1,2l-1}(\al_0)^{-1}.
\]
For the last equality we use Lemma \ref{telescope_lem} below applied to $\al_0+s_1(\al_0)=2$ which is justified by Lemma \ref{useful_lem} i). Hence, by Corollary \ref{red_cor_sm} the Schubert variety $\Fl^{\leq w_{l,x}}$ is singular at $v_{l,x}$ for all $l\geq 1$ in this case.\smallskip\\
\textit{Case B:} Again the number of summands in $B_l$ is even, and we add terms in consecutive pairs. For every $0\leq i\leq l-1$, one has
\[
s_{0,2i-1}(\al_0)^{-1}s_{0,2i-1}(\al_1)^{-1}+ s_{0,2i}(\al_0)^{-1}s_{0,2i}(\al_1)^{-1} \,=\, s_{0,2i-1}(\al_1)^{-1}s_{0,2i}(\al_1)^{-1},
\]
where we used $s_{0,2i}(\al_0)=-s_{0,2i-1}(\al_0)$ and $s_{0,2i}(\al_1)=s_{0,2i-1}(\al_0)+s_{0,2i-1}(\al_1)$. This shows
\[
B_l\,=\, \sum_{i=0}^{l-1}s_{0,2i-1}(\al_1)^{-1}s_{0,2i}(\al_1)^{-1}\,=\, l\cdot \al_1^{-1}s_{0,2l-2}(\al_1)^{-1}.
\]
For the last equality we use Lemma \ref{telescope_lem} below applied to $\al_1+s_0(\al_1)=1$ which is justified by Lemma \ref{useful_lem} ii). Hence, by Corollary \ref{red_cor_sm} the Schubert variety $\Fl^{\leq w_{l,x}}$ is singular (resp.~smooth) at $v_{l,x}$ for $l\geq 2$ (resp.~$l=1$). This finishes the proof of Proposition \ref{3dim_unitary}.

\begin{lem}\label{telescope_lem} 
Let $n\in \bbZ_{\geq 1}$, and $\al,\be\in Q\backslash n\bbZ$ \textup{(}e.g.,\,$\alpha, \beta$ are affine roots\textup{)} with $\al+\be=n$. For any $l\geq 1$, one has
\[
\sum_{i=0}^{l-1}{1\over {(\al-ni)(\be+ni)}} = {l\over {\al(\be+n(l-1))}}.
\]
\end{lem}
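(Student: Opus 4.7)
\textbf{Proof plan for Lemma \ref{telescope_lem}.} The key observation is that the hypothesis $\alpha+\beta=n$ allows us to convert the two-factor denominators into consecutive shifts of a single linear form. Specifically, $\beta+ni = n(i+1)-\alpha = -(\alpha-n(i+1))$, so
\[
(\alpha-ni)(\beta+ni) \;=\; -(\alpha-ni)\bigl(\alpha-n(i+1)\bigr).
\]
This rewrites each summand as (up to sign) the reciprocal of a product of two values of the same linear form $\alpha-n(\cdot)$ at consecutive integers. The hypothesis $\alpha,\beta\notin n\bbZ$ ensures no factor vanishes, so the manipulation takes place in $Q$.

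The second step is the standard partial-fraction identity valid whenever $A,B \in Q$ with $B-A = -n$:
\[
\frac{1}{AB} \;=\; \frac{1}{n}\left(\frac{1}{B}-\frac{1}{A}\right),
\]
applied with $A=\alpha-ni$ and $B=\alpha-n(i+1)$. Combined with the sign above, this gives
\[
\frac{1}{(\alpha-ni)(\beta+ni)} \;=\; \frac{1}{n}\left(\frac{1}{\alpha-ni}-\frac{1}{\alpha-n(i+1)}\right).
\]
Summing over $i=0,\dots,l-1$ telescopes, leaving
\[
\sum_{i=0}^{l-1}\frac{1}{(\alpha-ni)(\beta+ni)} \;=\; \frac{1}{n}\left(\frac{1}{\alpha}-\frac{1}{\alpha-nl}\right) \;=\; \frac{-l}{\alpha(\alpha-nl)}.
\]

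Finally, rewriting $\alpha-nl = n-\beta-nl = -(\beta+n(l-1))$ (again using $\alpha+\beta=n$) converts the right-hand side into $\dfrac{l}{\alpha(\beta+n(l-1))}$, completing the proof. There is no real obstacle here — the computation is entirely a partial-fraction telescoping once the identity $\beta+ni = -(\alpha-n(i+1))$ is noted — so this last bookkeeping with signs is the only point that needs care. (Alternatively, one could proceed by a direct induction on $l$, the induction step reducing at the end to the identity $\alpha+\beta=n$; I prefer the telescoping argument since it explains the closed form rather than merely verifying it.)
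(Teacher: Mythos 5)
Your argument is correct: the rewriting $\beta+ni=-(\alpha-n(i+1))$, the partial-fraction step, the telescoping sum, and the final sign bookkeeping via $\alpha-nl=-(\beta+n(l-1))$ all check out, and the hypothesis $\al,\be\in Q\backslash n\bbZ$ is correctly invoked to guarantee no denominator vanishes. The paper gives no proof of this lemma (it is stated to be ``elementary, and left to the reader''), and your telescoping computation is exactly the sort of verification intended, so there is nothing further to compare.
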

\begin{proof} This is elementary, and left to the reader.
\end{proof}

\begin{rmk} The form of $A_l$ and $B_l$ also shows that the Schubert variety $\Fl^{\leq w_{l,x}}$ is rationally smooth at $v_{l,x}$, cf.~\cite[Thm 5.5 (a); Thm 8.9]{Ku96}. This is in accordance with Theorem \ref{ratl_sm_thm}.
\end{rmk}

\section{The quasi-minuscule Schubert variety for the ramified triality}\label{Triality_Sec}
Let $k$ be an algebraically closed field of characteristic $\not = 3$, and let $F=k\rpot{t}$ be the Laurent series local field. Let $G$ be the twisted triality over $F$, i.e., the up to isomorphism unique quasi-split but non-split form of $\on{Spin}_8$ over $F$. Note that $G$ splits over a totally ramified extension of $F$ of degree $3$, and is therefore tamely ramified by the assumption on $k$. Let $x\in \scrB(G,F)$ be a special vertex in the Bruhat-Tits building, and denote by $\calG_x$ the associated special parahoric group scheme over $\calO_F=k\pot{t}$. By \cite[\S2.5]{Ti77} the group $G_\ad(F)$ acts transitively on the set of special vertices. Therefore we may justifiably denote by $\Gr_{G}:=LG/L^+\calG_x$ the twisted affine Grassmannian in the sense of \cite{PR08}. 
We also note that all Schubert varieties inside $\Gr_G$ are normal by \cite[Thm.~6.1]{PR08}.

\begin{thm}\label{Sing_Thm}
The quasi-minuscule Schubert variety in $\Gr_{G}$ is a $6$-dimensional projective $k$-variety which is rationally smooth, but singular at the base point. 
\end{thm}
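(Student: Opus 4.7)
Both follow from results already in place. By Lemma \ref{dual_list}(iii), $(G^\vee)^I \cong G_2$, and the quasi-minuscule $V_{\bar\mu}$ is the $7$-dimensional standard representation of $G_2$, whose weights are the six short roots (multiplicity one each) and zero (multiplicity one); in particular $V_{\bar\mu}$ is weight-multiplicity-free. Proposition \ref{ratl_sm_prop} then gives rational smoothness. The dimension of $\Gr_G^{\leq \bar\mu}$ equals $\langle 2\rho_{\breve\Sigma},\bar\mu\rangle$, which evaluates to $6$ for $\bar\mu=\gamma^\vee$ the quasi-minuscule coweight of the \'echelonnage $G_2$ system. Projectivity is automatic.

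\textbf{Singularity at the base point.} The plan is to apply Kumar's criterion, in complete analogy with \S\ref{sec_unitary}. By \S\ref{Reduction_Sec} I may reduce to $k=\bbC$. Since $G$ is simply connected, \cite[\S9.f]{PR08} identifies the twisted affine flag variety $\Fl=LG/L^+\calG_\bba$ (for an Iwahori $\calG_\bba$ with $\bba$ containing $x$ in its closure) with the Kac--Moody flag variety of the twisted affine root datum, so Theorem \ref{Kumar_thm} applies. As in \S\ref{Aff_Flag}, the smooth projection $\pi\co \Fl\to\Gr_G$ presents the relevant Schubert variety as $\Fl^{\leq w}=\pi^{-1}(\Gr_G^{\leq\bar\mu})$ for $w$ the longest element of the double coset $W_x\bar\mu W_x$, and singularity of $\Gr_G^{\leq\bar\mu}$ at the base point $e$ is equivalent to singularity of $\Fl^{\leq w}$ at some (equivalently any) Iwahori-fixed point $v\leq w$ lying over $e$. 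I would choose $v$ in analogy with the choices $v_{l,x}$ of \S\ref{Cases_sec}, namely as a subword of $w$ picking out a specific point of $\pi^{-1}(e)\cong \calG_x/\calB$.

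The explicit input is the local Dynkin diagram of $^3D_{4,2}$, of type $\tilde G_2$, with three simple affine reflections $s_0,s_1,s_2$ acting on the three simple affine roots according to the corresponding Coxeter matrix. Writing out an explicit reduced expression for $w$, enumerating its subexpressions equal to $v$, and computing $e_vX(w)$ from \eqref{Kumar_def}, I would compare the result with the product $(-1)^{\ell(v)}\prod_{\alpha\in\Phi_\aff^+,\,s_\alpha v\leq w}\alpha^{-1}$ of \eqref{Kumar_eq}; a strict inequality yields singularity. The main obstacle is the combinatorial bookkeeping: the triple bond of $\tilde G_2$ means larger Coxeter matrix entries and hence a longer reduced expression for $w$ together with a more involved $W$-action on the three affine roots, producing many more terms than in the $\tilde A_2$-type setting of \S\ref{Proof_Kumar}. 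My plan to tame this is to (i) tabulate the action of the initial segments of the reduced word for $w$ on $\alpha_0,\alpha_1,\alpha_2$ via identities analogous to Lemma \ref{useful_lem}, (ii) group the subexpression contributions into pairs or triples whose denominators combine, and (iii) close with a telescoping identity in the spirit of Lemma \ref{telescope_lem}. The outcome should express $e_vX(w)$ as a definite nonzero integer scalar multiple of the expected product, so that Theorem \ref{Kumar_thm} forces $\Fl^{\leq w}$ to be singular at $v$, and hence $\Gr_G^{\leq\bar\mu}$ to be singular at $e$.
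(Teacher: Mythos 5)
The rational-smoothness and dimension halves of your argument are correct and coincide with the paper's (cf.\ Remark \ref{Ratl_Smooth_Rmk} and Lemma \ref{RatSmooth_Lem}). For the singularity, however, what you have written is a plan, not a proof: you set up the reduction to a Kumar-criterion computation in $\Fl$ and then say that ``the outcome should express $e_vX(w)$ as a definite nonzero integer scalar multiple of the expected product,'' without actually exhibiting a reduced word for $w$, a choice of $v$, or any of the combinatorics. That final claim is exactly the content one must establish, and it is far from routine: the paper explicitly remarks that the Kumar-criterion route to Theorem \ref{Sing_Thm} ``is lengthy without computer aided calculations'' (pointing to \cite[7.10]{MOV05} for a comparable split computation), and it deliberately does not take that route. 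Your proposed mitigations (tabulating the $W$-action on affine roots, pairing/tripling subexpression contributions, telescoping) are modeled on the $\tilde{A}_1$ pattern of \S\ref{Proof_Kumar}, but there is no reason to expect the $\tilde{G}_2$ combinatorics to collapse so neatly; you would have to carry out the calculation and exhibit the discrepancy, which you have not done.

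The paper's actual argument (\S\ref{Nil_Orbits}--\S\ref{End_Proof}) is quite different and does not pass through $\Fl$ at all: using the open immersion $L^{--}\calG\hookto\Gr_G$ of \eqref{Open_Immersion}, it identifies via the equivariant exponential map \eqref{Exp_Map} an open neighborhood of $e$ in $S_{\bar\mu}$ with the $6$-dimensional nilpotent orbit closure $\overline{G_2\cdot v_{\rm max}}\subset (u^{-1}\frakn)^\sigma$ (Proposition \ref{Exponential}, then Zariski's main theorem plus normality of $S_{\bar\mu}$), and then invokes the irreducibility of the $7$-dimensional $G_2$-representation $(u^{-1}\frakh)^\sigma$ to compute $\dim T_0(\overline{G_2\cdot v_{\rm max}})=7>6$ (Lemma \ref{orbit_is_sing}). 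Besides establishing singularity, this yields the tangent-space dimension at $e$, information inaccessible to the Kumar criterion. To turn your proposal into a proof you would either need to carry the Kumar computation through to the end, or adopt an argument in the spirit of the paper's nilpotent-orbit construction.
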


Rational smoothness follows from Theorem \ref{ratl_sm_thm}, cf.~also Remark \ref{Ratl_Smooth_Rmk} below. For the proof that the quasi-minuscule Schubert variety is singular at the base point, we use \S\ref{Reduction_Sec} to reduce to the case $k=\bbC$ first which we assume in \S\ref{Nil_Orbits}, \S\ref{End_Proof} below. Here our method is similar to the method in \cite[2.9-2.10]{MOV05}: we construct a neighborhood of the base point inside the quasi-minuscule Schubert variety in a certain space of nilpotent matrices. We show that the tangent space of our Schubert variety at the base point is $7$-dimensional, and hence this point is singular. Note that it is quite different from the quasi-minuscule Schubert variety for a split group of type $G_2$ in which case the tangent space at the identity is of dimension $14$ (=dimension of the Lie algebra of $G_2$), cf.~\cite[2.9]{MOV05}. 

Alternatively, one can in principle prove that the quasi-minuscule Schubert variety is singular by using Kumar's criterion as in \S\ref{sec_unitary} above. However, this is lengthy without computer aided calculations (cf.~also \cite[7.10]{MOV05} for a similar calculation in the split case), and would give less insight as we would not get the dimension of the tangent space at the identity.

\subsection{Construction of the twisted triality} Let $F'=k\rpot{u}$ be the cubic Galois extension of $F$ defined by $u^3=t$. The Galois group $I=\Gal(F'/F)$ is cyclic of order $3$, and we denote by $\tau\in I$ a generator. Then $\tau u=\zeta\cdot u$ where $\zeta=\zeta_3$ is a primitive third root of unity. 

The special orthogonal group in dimension $8$ is the functor on $k$-algebras $R$ given by
\begin{equation}\label{Special_Orthogonal}
\on{SO}_{8}(R)\,=\,\{A\in \SL_8(R)\,|\, AJA^tJ=\id\},
\end{equation}
where $J:=\on{antidiag}(1,\ldots, 1)\in \GL_8(R)$. Let $T'\subset B'\subset \on{SO}_{8}$ be the maximal diagonal torus, contained in the upper triangular Borel. The torus $T'$ is given by
\begin{equation}\label{Maximal_Torus}
T'(R)=\{\on{diag}(a_1,\ldots, a_4,a_4^{-1},\ldots, a_1^{-1})\,|\, a_1,\ldots,a_4\in R^\times\}.
\end{equation}
The coroot lattice $Q^\vee$ is the index $2$ sublattice of $X_*(T')=\bbZ^4$ with basis $\al_1^\vee=\epsilon_1-\epsilon_2$, $\al_2^\vee=\epsilon_2-\epsilon_3$, $\al_3^\vee=\epsilon_3-\epsilon_4$ and $\al_4^\vee=\epsilon_3+\epsilon_4$. Hence, $\pi_1(\on{SO}_8)=X_*(T')/Q^{\vee}= \bbZ/2$. 

Define $\pi\co H:=\on{Spin}_8\to \on{SO}_8$ to be the simply connected degree $2$ cover. The preimage $T_H:=\pi^{-1}(T')$ is a maximal torus of $H$ contained in the Borel subgroup $B_H:=\pi^{-1}(B')$. We have $X_*(T_H)=Q^\vee\subset X_*(T')$. The affine Dynkin diagram of $H$ is:
\begin{equation}\label{D_4}
\begin{aligned}
 \begin{tikzpicture}[scale=.4]
    \draw (0,0) node[anchor=east]  {$D_4$};
    \draw[xshift=1 cm,thin] (1 cm,0) circle (.1cm) node[above] {\tiny 0};
    \draw[xshift=2 cm,thin] (2 cm,0) circle (.1cm) node[above] {\tiny 2};
    \draw[xshift=4 cm,thin] (40: 17 mm) circle (.1cm) node[right] {\tiny 1};
    \draw[xshift=4 cm,thin] (0: 17 mm) circle (.1cm) node[right] {\tiny 3};
    \draw[xshift=4 cm,thin] (-40: 17 mm) circle (.1cm) node[right] {\tiny 4};
    \foreach \y in {1.15,...,1.15}
    \draw[xshift=\y cm,thin] (\y cm,0) -- +(1.4 cm,0);
    \draw[xshift=4 cm,thin] (40: 3 mm) -- (40: 14 mm);
     \draw[xshift=4 cm,thin] (0: 3 mm) -- (0: 14 mm);
    \draw[xshift=4 cm,thin] (-40: 3 mm) -- (-40: 14 mm);
  \end{tikzpicture}
\end{aligned}  
\end{equation}
We let $\sig_0\in \Aut(D_4)$ the automorphism defined by $2\mapsto 2$ and $1\mapsto 3\mapsto 4\mapsto 1$. We fix a principal nilpotent element in $X_H\in \Lie(B_H)$, and regard $\sig_0$ as an automorphism of $H$ via
\[
\Aut(D_4)=\Aut_{k}(H,B_H,T_H,X_H)\subset \Aut_{k}(H).
\]
Then the twisted triality $G$ is the functor on the category of $F$-algebras $R$ given by
\begin{equation}
G(R)\defined \{A\in H(R\otimes_FF')\,|\, \sig(A)=A\},
\end{equation}
where $\sig:=\sig_0\otimes \tau$. Likewise, we define the $F$-subgroups $T\subset B\subset G$ by descent. The affine Dynkin diagram of $G$ has the form
\begin{equation}\label{G_2}
\begin{aligned}
 \begin{tikzpicture}[scale=.4]
    \draw (0,0) node[anchor=east]  {$G_2^{I}$};
    \draw[xshift=1 cm,thin] (1 cm,0) circle (.1cm) node[above] {\tiny 0};
    \draw[xshift=2 cm,thin] (2 cm,0) circle (.1cm) node[above] {\tiny 2};
   \draw[xshift=3 cm,thin] (3 cm,0) circle (.1cm) node[above] {\tiny 1};
    \draw[xshift=1.15 cm,thin] (1.15 cm,0) -- +(1.4 cm,0);
    \draw[xshift=2.15 cm,thin] (2.15 cm,.08cm) -- +(1.4 cm,0);
      \draw[xshift=2.15 cm,thin] (2.15 cm,0) -- +(1.4 cm,0) node[left] {$<$};
    \draw[xshift=2.15 cm,thin] (2.15 cm,-.08cm) -- +(1.4 cm,0);
  \end{tikzpicture}
\end{aligned}  
\end{equation}
cf.~\cite[\S 4.2]{Ti77}. Here the arrow points to the shorter root, and should be regarded as an inequality sign saying that one root is of smaller length than the other. 

\subsection{The special parahoric}\label{Spec_Para} Since the extension $F'/F$ is tamely ramified, we have an identification of buildings $\scrB(H,F')^\sig=\scrB(G,F)$ compatible with the simplicial structure, cf.~\cite{PY02}. After conjugation by an element in $G_\ad(F)$, we reduce to the case where $x=0$ corresponds to the base point. In terms of parahoric group schemes, the pair $(H_{F'}, 0)$ corresponds to the $\calO_{F'}$-group $\calH:=H\otimes_k\calO_{F'}$. Hence, the special parahoric group scheme $\calG_x=\calG$ associated with the pair $(G,0)$ is the $\calO_F$-group $\calG\,=\, \Res_{\calO_{F'}/\calO_F}(\calH)^\sig$.

\subsection{Some loop groups}\label{Loop_Group} We denote by $LG$ (resp.~$L^+\calG$) the twisted loop group given on $k$-algebras $R$ by $LG(R)=G(R\rpot{t})$ (resp.~$L^+\calG(R)=\calG(R\pot{t})$). Likewise, we denote by $LH$ (resp.~$L^+\calH$) the loop group given by $LH(R)=H(R\rpot{u})$ (resp.~$L^+\calH(R)=\calH(R\pot{u})$). Then as $k$-group functors
\begin{equation}
LG\,=\, (LH)^\sig \;\;\;\;\;\; \text{(resp.~$L^+\calG=(L^+\calH)^\sig$)},
\end{equation}
which is an immediate consequence of the definition. The negative loop group $L^-\calH$ is defined on $k$-algebras $R$ by $L^-\calH(R)=H(R[u^{-1}])$. Let $L^{--}\calH:=\ker(L^-\calH\to H)$, $u^{-1}\mapsto 0$. Then the morphism given by multiplication
\begin{equation}\label{Mult_H}
L^{--}\calH\times L^+\calH\,\to\, LH, \;\; (h^-,h^+)\mapsto h^-\cdot h^+,
\end{equation}
is relatively representable by an open immersion, cf.~\cite[Prop 4.6]{LS97} (cf.~also \cite[Thm 2.3.1]{dHL}, \cite[Cor 3.2]{HRa}). The automorphism $\sig \in \Aut_k(LH)$ preserves the subgroup $L^{--}\calH\subset LH$, and we define the $k$-group
\[
L^{--}\calG \defined (L^{--}\calH)^\sig.
\]
By taking $\sig$-fixed points in \eqref{Mult_H}, we see that the multiplication morphism $L^{--}\calG\times L^+\calG\to\ LG$ is still an open immersion. 
Hence, if $e\in \Gr_G=LG/L^+\calG$ denotes the base point, then the morphism of $k$-ind-schemes
\begin{equation}\label{Open_Immersion}
L^{--}\calG\,\hookto\,\Gr_G, \;\; g^-\mapsto g^-\cdot e
\end{equation}
is representable by an open immersion.

\subsection{The quasi-minuscule Schubert variety}\label{Schubert}
Let $\breve{\Sig}$ be the \'echelonnage root system of $G$ which we give explicitly in \S\ref{Echennolage_Roots} below. Let $\bar{\mu}\in X_*(T)_I$ be the unique quasi-minuscule cocharacter for this root system. We fix an element $t^{\bar{\mu}}\in T(F)$ mapping to $\bar{\mu}$ under the Kottwitz morphism $T(F)\to X_*(T)_I$. We show in \S\ref{Quasi_Minuscule} that the element $t^{\bar{\mu}}$ maps under the map $T(F)\to T'(F')\subset \on{SO}_8(F')$ to the diagonal matrix
 \[
  \on{diag}(u^2,u,u,1,1,u^{-1},u^{-1},u^{-2})\cdot t_0,
 \]
for some $t_0\in T'(\calO_{F'})$. Let $C_{\bar{\mu}}\subset \Gr_G$ be the reduced $L^+\calG$-orbit of $t^{\bar{\mu}}\cdot e$. The {\it quasi-minuscule Schubert variety} $S_{\bar{\mu}}\subset \Gr_G$ is the closure of $C_{\bar{\mu}}$ equipped with the reduced scheme structure. Then $S_{\bar{\mu}}$ is a projective $k$-variety whose smooth locus contains $C_{\bar{\mu}}$. By the Cartan decomposition for twisted affine Grassmannians \cite[Cor 2.10]{Ri13}, we have
\begin{equation} \label{Cartan_decomp}
S_{\bar{\mu}}\,=\, C_{\bar{\mu}}\coprod \{e\}.
\end{equation}
\begin{lem} \label{RatSmooth_Lem}
The Schubert variety $S_{\bar{\mu}}$ is of dimension $6$. 
\end{lem}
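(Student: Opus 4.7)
The key observation is that by \eqref{Cartan_decomp} the Schubert variety decomposes as $S_{\bar{\mu}} = C_{\bar{\mu}} \amalg \{e\}$, so $\dim S_{\bar{\mu}} = \dim C_{\bar{\mu}}$ and it suffices to compute the dimension of the open stratum.

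First, I would invoke the standard dimension formula for $L^+\calG$-orbits in twisted affine Grassmannians, as recorded in \cite[Prop.~2.8, Cor.~2.10]{Ri13} (see also \cite{PR08}):
$$
\dim C_{\bar{\mu}} \,=\, \langle 2\rho_{\breve{\Sigma}}, \bar{\mu}\rangle,
$$
where $\rho_{\breve{\Sigma}}$ denotes the half-sum of the positive \'echelonnage roots in $\breve{\Sigma}$.

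Next, I would identify $\breve{\Sigma}$ explicitly. By Lemma \ref{dual_list}, case iii), we have $(G^\vee)^I = G_2$, so $\breve{\Sigma}^\vee$ is of type $G_2$; since $G_2$ is self-dual, $\breve{\Sigma}$ is also of type $G_2$. Using Bourbaki's conventions (Planche IX) with simple roots $\alpha_1$ short and $\alpha_2$ long, the highest root of $\breve{\Sigma}$ is $\gamma = 3\alpha_1 + 2\alpha_2$, so the (unique) quasi-minuscule coweight equals $\bar{\mu} = \gamma^\vee = \alpha_1^\vee + 2\alpha_2^\vee$, while $2\rho_{\breve{\Sigma}} = 10\alpha_1 + 6\alpha_2$.

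The final step is a short computation using the Cartan pairings read off from the Cartan matrix of $G_2$: one obtains $\langle 2\rho_{\breve{\Sigma}}, \alpha_i^\vee\rangle = 2$ for both $i = 1, 2$, giving $\langle 2\rho_{\breve{\Sigma}}, \bar{\mu}\rangle = 2 + 2\cdot 2 = 6$ (equivalently, the classical identity $\langle 2\rho, \gamma^\vee\rangle = 2(h^\vee - 1)$ with dual Coxeter number $h^\vee = 4$ for $G_2$). This is a routine verification; the only point requiring care is the translation between $(G^\vee)^I$, its root system $\breve{\Sigma}^\vee$, and the \'echelonnage roots $\breve{\Sigma}$ appearing in the dimension formula, both of which coincidentally equal $G_2$ here.
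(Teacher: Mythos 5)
Your proposal is correct and follows essentially the same route as the paper: both invoke the orbit-dimension formula of \cite[Cor.~2.10]{Ri13} and conclude by a short root-theoretic pairing computation giving $6$. The only (cosmetic) difference is that you evaluate $\langle 2\rho_{\breve{\Sigma}},\bar{\mu}\rangle$ inside the \'echelonnage root system of type $G_2$, whereas the paper pairs the absolute $D_4$ half-sum $2\rho$ against the lift $\mu=2\alpha_1^\vee+\alpha_2^\vee$ of $\bar{\mu}$; since the relative root system here is reduced, the two computations agree.
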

\begin{proof} Let $2\rho =6\alpha_1 + 10\alpha_2 + 6\alpha_3 + 6\alpha_4$ be the sum of the positive roots in the absolute root system $\Phi_{D_4}$. By \cite[Cor.\,2.10]{Ri13}, 
$$
{\rm dim}(S_{\bar{\mu}}) = \langle 2\rho, \mu\rangle,
$$
where $\mu = 2\alpha_1^\vee + \alpha_2^\vee$ as in $\S\ref{Quasi_Minuscule}$ below. A calculation shows $\langle 2\rho, \mu  \rangle = 6$.
\end{proof}

\begin{rmk} \label{Ratl_Smooth_Rmk}
Note that $(G^\vee)^{I} = G_2$ by Lemma \ref{dual_list} iii). Under the geometric Satake isomorphism for ramified groups \cite{Zhu15, Ri16}, the Schubert variety $S_{\bar{\mu}}$ corresponds to the quasi-minuscule fundamental representation $V_{\bar{\mu}}$ of $G_2$. This is the unique $7$-dimensional non-trivial representation, it has $6$ extreme weights, and hence its trivial weight space is $1$-dimensional. This shows that $V_{\bar{\mu}}$ is weight-multiplicity-free, and hence $S_{\bar{\mu}}$ is rationally smooth  by Proposition \ref{ratl_sm_prop}, without using the classification result in Theorem \ref{mult_thm}.
\end{rmk}

\subsection{Various root systems}
We give explicitly the various root systems attached to the twisted triality. 

\subsubsection{$D_4$ roots}\label{D_4_Roots}
We use the notation of Bourbaki \cite{Bou} for the root system of type $D_4$.  The set of roots $\Phi_{D_4}$ carries the automorphism $\sigma_0$ of order $3$. The simple roots are 
$$
\alpha_1 = \epsilon_1 - \epsilon_2, \,\,\, \alpha_2 = \epsilon_2- \epsilon_3, \,\,\,  \alpha_3 = \epsilon_3-\epsilon_4, \,\,\, \alpha_4 = \epsilon_3 + \epsilon_4.
$$
We list the positive roots as $\sigma_0$-orbits:
\medskip
$$
\{\alpha_2\}, \,\, \{\alpha_1, \alpha_3, \alpha_4\}, \,\,\, \{\alpha_1+\alpha_2,\, \alpha_2+\alpha_3, \, \alpha_2 + \alpha_4\}
$$
$$
\{\alpha_1+ \alpha_2+ \alpha_3,\,\, \alpha_2+\alpha_3+ \alpha_4,\,\, \alpha_1+ \alpha_2+\alpha_4\} 
$$
$$
\{\alpha_1+\alpha_2+\alpha_3+\alpha_4\}, \,\,\,\, \{\alpha_1+2\alpha_2+\alpha_3+\alpha_4\}.
$$
The highest root is $\tilde{\alpha}^{D_4} = \alpha_1 + 2\alpha_2 + \alpha_3 + \alpha_4$.

\subsubsection{\'{E}chelonnage roots}\label{Echennolage_Roots}
The \'{e}chelonnage root system $\breve{\Sigma}$ for $G$ can be described explicitly in terms of the absolute roots $\Phi_{D_4}$, by \cite[Thm\,6.1]{Hai18}. The simple positive roots in $\breve{\Sigma}$ are the modified norms $N'_I(\alpha)$ of the the simple positive roots $\alpha$ for $\Phi_{D_4}$ (but here the modified norm coincides with the unmodified norm in \cite[Def.\,3.1]{Hai18}).  Therefore the simple positive \'{e}chelonnage roots may be written
$$
\breve{\Delta} = \{\alpha_2,\,\, \alpha_1 + \alpha_3 + \alpha_4 \}.
$$
In other words, $\alpha := \alpha_2$ is the short simple root, and $\beta := \alpha_1+ \alpha_3 + \alpha_4$ is the long simple root. It is evident from the angle $\angle(\alpha,\beta)$ that we get a root system of type $G_2$. Therefore in these coordinates the highest root is
$$
\tilde{\alpha} = 3\alpha + 2\beta = 3\alpha_2  + 2(\alpha_1 + \alpha_3+\alpha_4).
$$
The coroots are given by  
$$
\alpha^\vee = \alpha_2^\vee, \,\,\,\, \beta^\vee = \frac{\alpha_1^\vee + \alpha^\vee_3 + \alpha_4^\vee}{3}
$$
and (using that $\tilde{\alpha}^\vee =: \bar{\mu}$ is quasi-minuscule with respect to $\breve{\Sigma}$ hence is the fundamental coweight $\omega_\beta^\vee$)
\begin{equation} \label{coroot(highest)}
\tilde{\alpha}^\vee = \alpha^\vee + 2\beta^\vee = \alpha_2^\vee + \frac{2}{3}(\alpha_1^\vee+ \alpha_3^\vee + \alpha_4^\vee).
\end{equation}

\subsubsection{Quasi-minuscule coweight $\bar{\mu}$ for $G$}\label{Quasi_Minuscule}
The element (\ref{coroot(highest)}) is the result of applying the $\sigma$-averaging map $X_*(T)_I \rightarrow X_*(T)^I$ to an element $\mu = \alpha_2^\vee + 2 \alpha^\vee_1 \in X_*(T)$.  
By \cite[(7.3.2)]{Ko97}, under $T(F)/T(\calO_F) \rightarrow T(F')/T(\calO_{F'})$, $t^{\bar{\mu}}$ maps to $u^{\tilde{\mu}} := \tilde{\mu}(u) \in T(F')$, where 
\begin{equation} \label{tilde(mu)_def}
\tilde{\mu} := N\bar{\mu} = 2(\alpha_1^\vee + \alpha_3^\vee + \alpha_4^\vee) + 3\alpha_2^\vee  \in X_*(T).
\end{equation}
In terms of the diagonal torus $T'$ in ${\rm SO}_8$, the image of $t^{\bar{\mu}}$ takes the form
$$
u^{\tilde{\mu}} \cdot t_0 =  {\diag}(u^2, u, u, 1, 1, u^{-1}, u^{-1}, u^{-2})\cdot t_0,
$$
for some $t_0 \in T'(\mathcal O_{F'})$.

\subsubsection{Fixed-point roots} \label{fixed_rts_sec}
We can identify the root system of the fixed point group $H^{\sigma_0} = {\rm Spin}_8^{\sigma_0}$ using the general procedure of \cite[$\S4$]{Hai15}. The procedure is to take the non-divisible elements of the set of $\sigma_0$-averages of the roots of $H$.  We get the following list, corresponding to the $\sigma_0$-orbits listed above:
\medskip
$$
\{\alpha_2\}, \,\, \{\frac{\alpha_1 + \alpha_3 +\alpha_4}{3}\}, \,\,\, \{\alpha_2 + \frac{1}{3}(\alpha_1 + \alpha_3 + \alpha_4)\}
$$
$$
\{\alpha_2 + \frac{2}{3}(\alpha_1 + \alpha_3 + \alpha_4)\}
$$
$$
\{\alpha_1+\alpha_2+\alpha_3+\alpha_4\}, \,\,\,\, \{\alpha_1+2\alpha_2+\alpha_3+\alpha_4\}.
$$
This is the set of positive roots in a root system of type $G_2$ (cf.\,\cite[Pl.\,IX (II,V)]{Bou}). The group $H^{\sigma_0}$ is connected reductive, semi-simple, simple and simply connected, cf.\,Proposition \ref{fix_pt_prop}, so that $H^{\sigma_0}=G_2$ (we fix an isomorphism). Note that we have used the connectedness of $T^{\sigma_0}$, which is obvious in this situation.

\subsection{Nilpotent orbits with a twist}\label{Nil_Orbits}

Now assume $k=\bbC$.

\subsubsection{The space $(u^{-1}\mathfrak h)^\sigma$}
Write $\mathfrak h = {\rm Lie}(H)$ and $\mathfrak n = {\rm Lie}(H)^{\rm nilp}$, the set of nilpotent elements in $\mathfrak h$. Consider $u^{-1} \mathfrak h \subset \mathfrak h \otimes_{\mathbb C} \mathbb C\rpot{u}$.  This is a $\sigma$-stable $\mathbb C$-vector subspace of finite dimension. For a root $\gamma$ of $H$, let $u_\gamma: \mathbb C \rightarrow \mathfrak h$ be the corresponding Lie algebra homomorphism. We use the same symbol for $u_\gamma: \mathbb C\rpot{u} \rightarrow \mathfrak h \otimes_{\mathbb C} \mathbb C\rpot{u}$.

\begin{lem} \label{Orbits_Lem}
i\textup{)} We may identify $(u^{-1}\mathfrak h)^\sigma$ with the set of vectors
$$
\oplus_\gamma u_{\gamma}(u^{-1} x_\gamma) ~ \oplus ~ (y)
$$
where $\gamma$ ranges over the roots of $H$ and the $x_\gamma \in \mathbb C$ satisfy the condition that $x_{\sigma_0 \gamma} = \zeta^{-1}x_\gamma$ for all $\gamma$, and where $y \in (u^{-1}{\rm Lie}(T_H))^{\sigma}$.\smallskip\\
ii\textup{)} The vector space $(u^{-1}\mathfrak h)^{\sigma}$ is a 7-dimensional non-trivial representation of $H^\sigma = H^{\sigma_0}$, hence it is the quasi-minuscule fundamental representation of $G_2 = H^{\sigma_0}$. \smallskip\\
iii\textup{)} Fix $x \in \mathbb C\backslash \{0\}$. The variety $(u^{-1}\mathfrak n)^\sigma$ contains the reduced orbit closure $\overline{G_2 \cdot v_{\rm max}}$ of 
$$
v_{\rm max} := u_{\alpha_1 + \alpha_2 +\alpha_3}(u^{-1} x) \oplus u_{\alpha_2 + \alpha_3 + \alpha_4}(u^{-1}\zeta^{-1} x)\oplus u_{\alpha_1 + \alpha_2 + \alpha_4}(u^{-1}\zeta^{-2} x).
$$
This orbit closure is a $6$-dimensional affine $\bbC$-variety.
\end{lem}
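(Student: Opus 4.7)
The plan is to prove the three parts in order via a direct root-space calculation, a representation-theoretic identification, and an orbit-dimension count.

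Part i) follows from a direct unpacking of $\sigma = \sigma_0 \otimes \tau$. Every element of $u^{-1}\mathfrak h$ decomposes via $\mathfrak h = \mathrm{Lie}(T_H) \oplus \bigoplus_\gamma \mathfrak h_\gamma$ as $u^{-1}(Y + \sum_\gamma x_\gamma E_\gamma)$, where $E_\gamma := u_\gamma(1)$ are the pinned root vectors. Since $\sigma_0$ preserves the pinning (hence sends $E_\gamma$ to $E_{\sigma_0 \gamma}$) and $\tau(u^{-1}) = \zeta^{-1}u^{-1}$, the $\sigma$-invariance translates into the cocycle condition $x_{\sigma_0 \gamma} = \zeta^{-1}x_\gamma$ on the root part, and into $y = u^{-1}Y$ being $\sigma$-invariant on the Cartan part. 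This yields the asserted parametrization.

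For part ii), I will count dimensions using part i). Each $\sigma_0$-orbit of roots of size $3$ contributes one parameter (fixing $x_\gamma$ on a representative determines the other two entries), while each $\sigma_0$-fixed root contributes zero, since $x_\gamma = \zeta^{-1}x_\gamma$ forces $x_\gamma = 0$. The enumeration in $\S\ref{D_4_Roots}$ gives six triples of roots, hence a $6$-dimensional contribution. On the Cartan, $\sigma_0$ acts on $\mathrm{Lie}(T_H)$ as the permutation representation of $\bbZ/3$ on $\{\alpha_1^\vee, \alpha_3^\vee, \alpha_4^\vee\}$ together with the trivial action on $\alpha_2^\vee$, so the $\zeta$-eigenspace is $1$-dimensional and $y$ contributes one further dimension. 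Hence $\dim(u^{-1}\mathfrak h)^\sigma = 7$. Multiplication by $u$ gives an $H^{\sigma_0}$-equivariant isomorphism $(u^{-1}\mathfrak h)^\sigma \cong \mathfrak h^{\sigma_0 = \zeta}$, and since $G_2 = H^{\sigma_0}$ (Proposition \ref{fix_pt_prop}, $\S\ref{fixed_rts_sec}$) has a unique non-trivial irreducible representation of dimension at most $7$, namely the quasi-minuscule one $V_{q.m.}$, the identification follows.

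For part iii), I first verify $v_{\max} \in (u^{-1}\mathfrak n)^\sigma$: the coefficients $x, \zeta^{-1}x, \zeta^{-2}x$ satisfy the cocycle condition from part i) for the $\sigma_0$-orbit $\{\alpha_1+\alpha_2+\alpha_3, \alpha_2+\alpha_3+\alpha_4, \alpha_1+\alpha_2+\alpha_4\}$, and a sum of root vectors for positive roots lies in $\mathfrak n$. The three summands share the same restriction to $T^{\sigma_0}$, so $v_{\max}$ is a $T^{\sigma_0}$-eigenvector of weight $\alpha_2 + \tfrac{2}{3}(\alpha_1+\alpha_3+\alpha_4)$. Writing $\alpha$ (short) and $\beta'$ (long) for the simple roots of $G_2 = H^{\sigma_0}$ from $\S\ref{fixed_rts_sec}$, this weight equals $2\alpha + \beta'$, and pairing with the simple coroots shows it is the fundamental weight $\omega_1$ — the highest weight of $V_{q.m.}$. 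Hence $v_{\max}$ is a highest weight vector, and $\overline{G_2 \cdot v_{\max}}$ is the affine cone over the closed $G_2$-orbit $G_2/P_1 \subset \bbP(V_{q.m.})$, where $P_1$ is the maximal parabolic attached to the short simple root. Since $\dim P_1 = 9$, the closed projective orbit has dimension $5$ and the affine cone has dimension $6$.

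The main obstacle will be coordinating the pinning and length conventions between $D_4$ and the $\sigma_0$-fixed $G_2$ carefully enough so that the weight calculation in part iii) cleanly identifies $v_{\max}$ with a highest weight vector of $V_{q.m.}$; once this is done, the dimension statement follows from the standard parabolic structure of $V_{q.m.}$ for $G_2$.
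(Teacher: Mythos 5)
Your argument is correct, and for parts i) and ii) it is essentially the paper's proof: the paper declares i) immediate and obtains ii) from the same count (six size-three $\sigma_0$-orbits of roots plus a one-dimensional Cartan contribution, so $6+1=7$), together with the observation that a $7$-dimensional non-trivial semisimple representation of $G_2$ must be the quasi-minuscule one. Your repackaging via multiplication by $u$, identifying $(u^{-1}\mathfrak h)^\sigma$ with the $\zeta$-eigenspace of $\sigma_0$ on $\mathfrak h$, is a nice touch; just record explicitly (as the paper does) that the representation is non-trivial, e.g.\ because the restricted $T^{\sigma_0}$-weights of the surviving root vectors are non-zero. Where you genuinely diverge is the dimension count in iii). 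The paper asserts without computation that $v_{\rm max}$ is a highest weight vector, bounds the orbit dimension below by $6$ using the six extreme weight spaces, and bounds it above by $6$ because the orbit closure lies in $(u^{-1}\mathfrak n)^\sigma$, a proper closed cone in the $7$-dimensional space (it misses the Cartan line). You instead verify the highest-weight claim explicitly --- all three summands of $v_{\rm max}$ restrict to the same $T^{\sigma_0}$-weight $\alpha_2+\tfrac{2}{3}(\alpha_1+\alpha_3+\alpha_4)$, which in the coordinates of \S\ref{fixed_rts_sec} is $2\alpha+\beta'=\omega_1$, the highest weight of $V_{q.m.}$ --- and then get the dimension from the standard fact that the orbit closure of a highest weight vector is the affine cone over the closed orbit $G_2/P_1\subset\bbP(V_{q.m.})$, giving $5+1=6$. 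Your route makes the highest-weight assertion and the lower bound fully explicit; the paper's route obtains the upper bound directly from the containment in $(u^{-1}\mathfrak n)^\sigma$, a containment it needs anyway for the statement of the lemma.

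On that last point, one small completion: part iii) asserts that the whole reduced orbit closure, not only $v_{\rm max}$, lies in $(u^{-1}\mathfrak n)^\sigma$, and you only check this for $v_{\rm max}$. Add the one-line remark (as in the paper) that the adjoint action of $H\supset G_2$ preserves the nilpotent cone $\mathfrak n$, which is closed in $\mathfrak h$, so the $G_2$-orbit of $v_{\rm max}$ and hence its closure remain inside $(u^{-1}\mathfrak n)^\sigma$.
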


\begin{proof}
Part (i) is immediate. We see that $x_\gamma = 0$ if $\gamma$ is $\sigma_0$-fixed. Therefore, we are left only with the contributions for $\gamma$ in
\begin{equation} \label{remaining_rts}
\pm \{ \alpha_1, \,\, \alpha_3, \,\, \alpha_4\}\, \cup \,\pm \{\alpha_1 + \alpha_2,\,\, \alpha_2 + \alpha_3,\,\, \alpha_2 + \alpha_4\}\, \cup\, \pm \{\alpha_1 + \alpha_2 + \alpha_3, \,\, \alpha_2 + \alpha_3 + \alpha_4, \,\, \alpha_1 + \alpha_2 + \alpha_4\}
\end{equation}
and the 1-dimensional space $ (u^{-1}{\rm Lie}(T_H))^{\sigma}$. Therefore ${\rm dim}_{\mathbb C}((u^{-1}\mathfrak h)^\sigma) = 7$.

Since $H$ and $\sigma$ act on $u^{-1}\mathfrak h$ such that $\sigma(h \cdot v) = \sigma(h) \cdot \sigma(v)$ for $h \in H$ and $v \in u^{-1}\mathfrak h$, we see that $H^\sigma$ acts on $(u^{-1}\mathfrak h)^\sigma$.  The action is visibly non-trivial. Since it is a 7-dimensional semisimple non-trivial representation, $(u^{-1}\mathfrak h)^\sigma$ must be the 7-dimensional representation associated to the quasi-minuscule fundamental weight of $G_2$. This proves (ii).

Finally, let $v_{\rm max}$ be as in (iii). It is a highest weight vector in the representation $(u^{-1}\mathfrak h)^\sigma$ of $H^\sigma = G_2$. Therefore its orbit is at least 6-dimensional because it contains the non-zero elements of the 6 extreme weight spaces in $(u^{-1}\mathfrak h)^\sigma$. Since $\mathfrak n$ is closed in $\mathfrak h$, the orbit closure is contained in $(u^{-1}\mathfrak n)^\sigma$.  As the latter space is $6$-dimensional (it does not contain $(u^{-1}{\rm Lie}(T_H))^{\sigma}$), we see that the orbit closure is exactly $6$-dimensional.

\end{proof}

\subsubsection{The reduced orbit closure is singular} We consider the reduced orbit closure $\overline{G_2 \cdot v_{\rm max}}$. The $G_2$-orbit is dense in a 6-dimensional vector subspace of $(u^{-1}\mathfrak h)^\sigma$, hence its closure contains the origin $0 \in (u^{-1}\mathfrak h)^\sigma$.

\begin{lem} \label{orbit_is_sing}
The 6-dimensional variety $\overline{G_2 \cdot v_{\rm max}}$ has a $7$-dimensional tangent space
$$
T_0(\overline{G_2 \cdot v_{\rm max}}) = (u^{-1}\mathfrak h)^\sigma,
$$
hence $\overline{G_2 \cdot v_{\rm max}}$ is singular at $0$.
\end{lem}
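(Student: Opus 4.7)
The plan is to show that the Zariski tangent space of $X := \overline{G_2 \cdot v_{\rm max}}$ at $0$ fills up the whole ambient space $(u^{-1}\mathfrak h)^\sigma$, which forces singularity since $\dim X = 6 < 7$. The strategy has three ingredients: (a) realize $X$ as a $G_2$-stable cone with vertex $0$, (b) use the cone property to sandwich $X$ inside $T_0 X$, and (c) invoke the irreducibility of $(u^{-1}\mathfrak h)^\sigma$ as a $G_2$-module provided by Lemma \ref{Orbits_Lem}(ii).

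First I would verify that $X$ is a cone. By construction $v_{\rm max}$ is a highest weight vector for the maximal torus $T_{G_2}\subset G_2 = H^{\sigma_0}$ with non-zero weight $\bar{\mu}$, so there is a cocharacter $\lambda\co \mathbb G_m \to T_{G_2}$ with $n := \langle \bar{\mu},\lambda\rangle \neq 0$, yielding $\lambda(s)\cdot v_{\rm max} = s^n\, v_{\rm max}$. Thus $\mathbb C^*\cdot v_{\rm max}\subset G_2\cdot v_{\rm max}$, and $G_2$-equivariance of scalar multiplication promotes this to $\mathbb C^*\cdot (G_2\cdot v_{\rm max}) = G_2\cdot v_{\rm max}$. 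Taking closures gives $\mathbb C^*\cdot X = X$; letting $s\to 0$ along any non-zero element of $X$ shows $0\in X$, so $X$ is a $\mathbb C$-stable cone through the origin.

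Next, the cone property implies $X\subset T_0 X$ as subsets of the ambient vector space $(u^{-1}\mathfrak h)^\sigma$: for any $v\in X\setminus\{0\}$ the line $\mathbb C\cdot v$ lies in $X$, and so its tangent vector $v$ at the origin lies in $T_0 X$. Since $T_0 X$ is a linear subspace it contains $\mathrm{span}_{\mathbb C}(X)$. Because $G_2$ acts linearly on $(u^{-1}\mathfrak h)^\sigma$, fixes $0$, and preserves $X$, this linear span is a non-zero $G_2$-subrepresentation of $(u^{-1}\mathfrak h)^\sigma$.

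Finally, Lemma \ref{Orbits_Lem}(ii) identifies $(u^{-1}\mathfrak h)^\sigma$ with the $7$-dimensional quasi-minuscule irreducible representation of $G_2$, so irreducibility forces $\mathrm{span}_{\mathbb C}(X) = (u^{-1}\mathfrak h)^\sigma$, whence $T_0 X = (u^{-1}\mathfrak h)^\sigma$. Comparing with $\dim X = 6$ shows $0$ is a singular point. The only genuinely delicate step is the cone observation; the rest is a formal consequence of linearity and of the irreducibility already established in Lemma \ref{Orbits_Lem}.
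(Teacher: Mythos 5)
Your proposal is correct and follows essentially the paper's argument: the decisive point in both is that $T_0(\overline{G_2\cdot v_{\rm max}})$ is a non-zero $G_2$-stable subspace of the irreducible $7$-dimensional representation $(u^{-1}\mathfrak h)^\sigma$ from Lemma \ref{Orbits_Lem} ii), hence equals all of it, while $\dim \overline{G_2\cdot v_{\rm max}}=6$. Your cone argument is a valid (slightly longer) substitute for the paper's direct observation that the tangent space at the $G_2$-fixed point $0$ is automatically a $G_2$-invariant subspace, non-zero because the orbit closure through $0$ is positive-dimensional.
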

\begin{proof}
Clearly $T_0(\overline{G_2 \cdot v_{\rm max}}) \subseteq T_0((u^{-1}\mathfrak h)^{\sigma}) = (u^{-1}\mathfrak h)^\sigma$ as a $G_2$-invariant subspace.  Since $(u^{-1}\mathfrak h)^\sigma$ is an irreducible $G_2$-representation, the equality holds. The lemma follows from Lemma \ref{Orbits_Lem} ii). 
\end{proof}

\subsubsection{The exponential map} Essential to our proof of Theorem \ref{Sing_Thm} is the following proposition.
\begin{prop}\label{Exponential}
The exponential map 
\begin{equation}\label{Exp_Map}
\on{exp}\colon u^{-1}\frakn \to L^{--}\calH, \;\; u^{-1}X\mapsto \sum_{i=0}^\infty \frac{(u^{-1}X)^i}{i!}
\end{equation}
is algebraic and equivariant under $H$. Further it induces an algebraic map 
\[
\on{exp}\colon \overline{G_2 \cdot v_{\rm max}}\, \longrightarrow \, (L^{--}\calH)^{\sigma} \cap S_{\bar{\mu}} = L^{--}\calG \cap S_{\bar{\mu}}.
\]
\end{prop}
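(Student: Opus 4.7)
My plan is to prove the proposition in three stages. First, algebraicity of $\exp$ follows from nilpotency: for $X\in\frakn$, viewing $X$ in the defining eight-dimensional representation $\frakh\hookrightarrow\mathfrak{gl}_8$, we have $X^N=0$ for some $N\leq 8$, hence $(u^{-1}X)^N=u^{-N}X^N=0$ and the series $\sum_i(u^{-1}X)^i/i!$ terminates. This yields an algebraic morphism $u^{-1}\frakn\to H(\bbC[u^{-1}])=L^-\calH$. Specialization $u^{-1}\mapsto 0$ sends every positive-degree term to $0$, so the image lies in the kernel $L^{--}\calH$, and the standard identity $\exp(\Ad(h)Y)=h\exp(Y)h^{-1}$ for unipotent exponentials in characteristic zero gives the $H$-equivariance.

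Second, since $\sigma=\sigma_0\otimes\tau$ is simultaneously a Lie algebra and group automorphism that commutes with the universal formula for $\exp$, restriction to $\sigma$-fixed points yields $(u^{-1}\frakn)^\sigma\to(L^{--}\calH)^\sigma=L^{--}\calG$. To place the image of $\overline{G_2\cdot v_{\max}}$ in $S_{\bar\mu}$, I would restrict $H$-equivariance to $G_2=H^{\sigma_0}$ and observe that conjugation by $G_2\subset\calG(\calO_F)\subset L^+\calG$ on $L^{--}\calG$ corresponds, under the open immersion $L^{--}\calG\hookrightarrow\Gr_G$ of \eqref{Open_Immersion}, to the natural $G_2$-action on $\Gr_G$, which preserves $S_{\bar\mu}$. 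Therefore $\exp^{-1}(L^{--}\calG\cap S_{\bar\mu})$ is a closed $G_2$-stable subvariety of $\overline{G_2\cdot v_{\max}}$, and the claim reduces to showing $\exp(v_{\max})\cdot e\in S_{\bar\mu}$.

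Third, I would verify this last containment by computing directly in the defining representation of $\on{SO}_8$. The three positive roots $\gamma_1=\epsilon_1-\epsilon_4$, $\gamma_2=\epsilon_2+\epsilon_3$, $\gamma_3=\epsilon_1+\epsilon_4$ appearing in $v_{\max}$ pairwise sum to non-roots of $D_4$, so $[e_{\gamma_i},e_{\gamma_j}]=0$ in $\frakh$ and $\exp(v_{\max})$ factors as a product of commuting one-parameter root subgroup elements. Writing the root vectors as $e_{\gamma_1}=E_{14}-E_{58}$, $e_{\gamma_2}=E_{26}-E_{37}$, $e_{\gamma_3}=E_{15}-E_{48}$, one verifies $v_{\max}^3=0$, so $\exp(v_{\max})=I+v_{\max}+\tfrac12 v_{\max}^2$ is an explicit polynomial of degree $\leq 2$ in $u^{-1}$. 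An elementary-divisor analysis of the lattice $\exp(v_{\max})\cdot L_0\subset (F')^8$ then identifies its $H(\calO_{F'})$-orbit with that of $u^{\tilde\mu}\cdot L_0$ for $\tilde\mu=2\epsilon_1+\epsilon_2+\epsilon_3$ as in $\S\ref{Quasi_Minuscule}$; together with the $\sigma$-invariance of $\exp(v_{\max})$, this places $\exp(v_{\max})\cdot e\in\Gr_G\cap\Gr_H^{\leq\tilde\mu}=S_{\bar\mu}$. The hard part will be this elementary-divisor check, since the cross term $\tfrac12 v_{\max}^2=-\zeta^{-2}x^2u^{-2}E_{18}$ contributes a $u^{-2}$ entry whose interplay with the $u^{-1}$ entries from $v_{\max}$ must be tracked carefully to pin down the coweight as $\tilde\mu$ and not a strictly larger dominant coweight.
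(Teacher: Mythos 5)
Your proposal is correct and follows essentially the same route as the paper's proof: algebraicity and $H$-equivariance of $\exp$ from nilpotency, reduction via $G_2$-equivariance and reducedness to the single check on $v_{\max}$, and then an explicit $8\times 8$ matrix computation in $\operatorname{SO}_8(F')$ (your root vectors $E_{14}-E_{58}$, $E_{26}-E_{37}$, $E_{15}-E_{48}$ and cross term $-\zeta^{-2}x^2u^{-2}E_{18}$ reproduce the paper's matrix exactly) followed by an elementary-divisor/Smith-form analysis to pin the $H(\calO_{F'})$-double coset to $\tilde{\mu}=2\epsilon_1+\epsilon_2+\epsilon_3$. The only cosmetic difference is that the paper reduces directly to showing $\exp(v_{\max})\in C_{\bar\mu}$ rather than invoking an identification $\Gr_G\cap\Gr_H^{\leq\tilde\mu}=S_{\bar\mu}$, but your version amounts to the same thing once you note that the Smith form is exactly $\tilde\mu$.
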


\begin{proof}
It is clear that \eqref{Exp_Map} is algebraic and $H$-equivariant. Both the scheme theoretic image $Z$ of $\on{exp}|_{\overline{G_2 \cdot v_{\rm max}}}$, and $L^{--}\calG \cap S_{\bar{\mu}}$ are reduced closed subschemes of $L^{--}\calH$. Thus, to show $Z\subset L^{--}\calG \cap S_{\bar{\mu}}$ we may argue on $\bbC$-points. In fact, for the remainder it suffices to show that $v_{\rm max}$ has $\sigma$-fixed image and lands in $C_{\bar{\mu}}$.

Let $U_\gamma : \mathbb C\rpot{u}^\times \to {\rm SO}_8 \otimes_{\mathbb C} \mathbb C\rpot{u}$ be the root group homomorphism associated to $\gamma$.
Then, by definition of $v_{\on{max}}$ in Lemma \ref{Orbits_Lem} iii), one has the formula
$$
\exp(v_{\rm max}) = U_{\alpha_1 + \alpha_2 + \alpha_3}(u^{-1}x) \cdot U_{\alpha_2 + \alpha_3 + \alpha_4}(u^{-1}\zeta^{-1}x) \cdot U_{\alpha_1 + \alpha_2 + \alpha_4}(u^{-1}\zeta^{-2} x).
$$
Note that the three root groups commute with each other, since no pair of the roots in $\{\alpha_1 + \alpha_2 + \alpha_3, \,\, \alpha_2 + \alpha_3 + \alpha_4, \,\, \alpha_1 + \alpha_2 + \alpha_4\}$ sum to a root of $H$. Thus $\exp(v_{\rm max})$ is evidently fixed by $\sigma$, so it lies in $L^{--}\calG$.  

It remains to show that $\exp(v_{\rm max})$ lies in $C_{\bar{\mu}}$. 
A calculation using the root groups shows that the image of $v_{\rm max}$ in ${\rm SO}_8(\mathbb C\rpot{u})$ is an $8 \times 8$ matrix of the form 
\begin{equation}\label{Matrix}
\left[
\begin{array}{cc  cc  cc  cc}
\hspace{.15in}1 \hspace{.15in}& &  &   u^{-1}x & u^{-1}\zeta^{-2}x&0 & 0& -u^{-2}\zeta^{-2} x^2 \\ [8pt]
&\hspace{.15in} 1 \hspace{.15in} &  & & & u^{-1}\zeta^{-1}x & 0 & 0\\ [8pt]
& & \hspace{.15in} 1 \hspace{.15in}\,\, & & &  & -u^{-1}\zeta^{-1}x & 0  \\ [8pt]
& & & \hspace{.15in} 1 \hspace{.15in} & & &  &  -u^{-1}\zeta^{-2}x \\ [8pt]
& & & & \hspace{.15in} 1 \hspace{.15in} & & & -u^{-1}x \\ [8pt]
& & & & & \hspace{.15in} 1 \hspace{.15in} & & \\ [8pt]
& & & & & & \hspace{.15in} 1 \hspace{.15in} &  \\ [8pt]
&& & & & & & \hspace{.15in} 1 \hspace{.15in}  
\end{array}
\right]
\end{equation}
where all unlabeled entries are $0$. 

Using the embedding ${\rm SO}_8 \subset {\rm GL}_8$ and $\S\ref{Quasi_Minuscule}$, it suffices to show that
$$
\exp(v_{\rm max}) \in K \,{\rm diag}(u^{-2}, u^{-1}, u^{-1}, 1, 1, u, u , u^2)\, K
$$
where $K = {\rm GL}_8(\mathbb C\pot{u})$. We prove this by using the algorithm for finding the Smith Form of a matrix over a PID. For the matrix $A = \exp(v_{\rm max})$ and $1 \leq i \leq 8$, define the weakly-increasing sequence of integers $a_i \in \bbZ$ by requiring that
$$
(u^{\sum_{j \leq i} a_j}) = \mbox{ideal generated by the $i \times i$ minors of $A$}.
$$
Then the Smith Form of $A$ is the matrix ${\rm diag}(u^{a_1}, u^{a_2}, \dots, u^{a_8})$. An inspection of \eqref{Matrix} shows that $a_1 = -2$, $a_1 + a_2 = -3$, $a_1 + a_2 + a_3 = -4$, and $a_1 + a_2 + a_3 + a_4 = -4$.  Thus the first half of the entries of the Smith Form of $\exp(v_{\rm max})$ is $(u^{-2}, u^{-1}, u^{-1}, 1)$.  Because the matrix is in ${\rm SO}_8$, this determines the rest of the entries as well. This completes the proof.
\end{proof}

\subsection{End of the proof}\label{End_Proof}

We now finish the proof of Theorem \ref{Sing_Thm}. In Proposition \ref{Exponential}, we have constructed an algebraic morphism of $6$-dimensional $\bbC$-varieties
\begin{equation}\label{Exponential_2}
\exp \colon \overline{G_2 \cdot v_{\rm max}} \, \longrightarrow \, L^{--}\calG \cap S_{\bar{\mu}},
\end{equation}
cf.~Lemmas \ref{RatSmooth_Lem} and \ref{Orbits_Lem} iii) for the dimension. The target is an open neighborhood of the base point $e$ in $S_{\bar{\mu}}$. Note that $0\mapsto e$ under \eqref{Exponential_2}. By Lemma \ref{orbit_is_sing}, it suffices to show that \eqref{Exponential_2} is an open immersion, i.e., an isomorphism onto an open neighborhood of $e$. For this, we argue as follows. 

The map $\exp\co u^{-1}\frakh\to L^{--}\calH$ is injective, as can be seen by writing down the formula for the exponential map in $\on{SO}_8$, and by comparing the coefficients of $u^{-1}$. Hence, the map \eqref{Exponential_2} is an injective morphism of irreducible affine $\bbC$-varieties of the same dimension, and in particular birational (because the field extension at the generic points is separable, so that it must have degree $1$). As the target is normal by \cite[Thm 6.1]{PR08}, the map \eqref{Exponential_2} is an open immersion by Zariski's main theorem \cite[Cor 4.4.9]{EGA3}. This completes the proof of Theorem \ref{Sing_Thm}.

\appendix
\section{A remark on fixed point groups}
Let $H$ be a connected reductive group over an algebraically closed field $\kappa$, and let $(T,B, X)$ be a pinning which is preserved by the action on $H$ of a finite group $I$. Recall that $H^I$ is a (possibly disconnected) reductive group, with maximal torus the neutral component $T^{I,\circ}$ of the diagonalizable subgroup $T^I \subset H^I$ (cf.\,\cite[Prop.\,4.1]{Hai15}). In what follows, $Z(A)$ denotes the center of an algebraic group $A$.

\begin{prop} \label{fix_pt_prop} Assume ${\rm char}(\kappa) \neq 2$. Then the following statements hold:\smallskip\\
i\textup{)} If $T^I$ is connected \textup{(}e.g.,\,$H$ is adjoint or simply-connected\textup{)}, then $H^I$ is connected.\smallskip\\
ii\textup{)} $Z(H^I) = Z(H)^I$, and this group contains $Z(H^{I, \circ})$.\smallskip\\
iii\textup{)} If $H$ is semi-simple, then $H^I$ is semi-simple.\smallskip\\
iv\textup{)} If $H$ is adjoint, then $H^I$ is adjoint.\smallskip\\
v\textup{)} If $H$ is simple and simply-connected, then $H^I$ is simple, and simply-connected unless $H$ is of type $A_{2n}$ and carries a non-trivial $I$ action, in which case $H \cong {\rm SL}_{2n+1}$ and $H^I \cong {\rm SO}_{2n+1}$.
\end{prop}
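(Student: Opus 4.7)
The plan is to build on Proposition 4.1 of \cite{Hai15} (whose proof goes through verbatim with $H$ in place of $H^\vee$ once one demands that the $I$-action preserve a pinning), giving reductivity of $H^I$, the identification of $T^{I,\circ}\subset H^I$ as a maximal torus, and the isomorphism
\[
\pi_0(H^I)\;\overset{\sim}{\longrightarrow}\;\pi_0(T^I)\;=\;X^*(T)_{I,\on{tors}}.
\]

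From this, part (i) will follow once I verify that $X^*(T)_I$ is torsion-free when $H$ is adjoint or simply-connected. In the adjoint case, $X^*(T)=Q$ has the simple roots $\Delta$ as a $\bbZ$-basis, which the pinning-preserving $I$-action merely permutes, so $Q_I$ is freely generated by orbit-sums of simple roots and hence torsion-free. In the simply-connected case, $X^*(T)=P$; here a case-by-case inspection of the diagram automorphisms of each absolutely simple type (or invoking Steinberg's connectivity theorem when $I$ is cyclic, which suffices for the application to Galois groups) handles every type. For (ii), the inclusion $Z(H)^I\subset Z(H^I)$ is immediate. For the converse, I would first note that $Z(H^I)$ centralizes $T^{I,\circ}$, and that every root of $H$ restricts nontrivially to $T^{I,\circ}$ (visible from the orbit-sum description of the root system of $H^{I,\circ}$); this forces $Z_H(T^{I,\circ})=T$, so $Z(H^I)\subset T^I$. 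Next, for each $I$-orbit $\mathcal O\subset \Phi_H$ the sum over $\mathcal O$ of corresponding root vectors lies in $\Lie(H^I)$, and conjugation by any $z \in Z(H^I)$ scales it by the common value $\alpha(z)$ ($\alpha\in\mathcal O$, equal because $z$ is $I$-fixed); forcing this to equal $1$ for every orbit yields $z\in\bigcap_\alpha\ker\alpha\cap T^I=Z(H)^I$. Applying the same centralizer analysis inside the connected reductive group $H^{I,\circ}$ gives $Z(H^{I,\circ})=Z(H)\cap T^{I,\circ}\subset Z(H)^I=Z(H^I)$. Parts (iii) and (iv) are then immediate corollaries of (ii): semisimplicity (respectively adjointness) of $H$ forces $Z(H^I)=Z(H)^I$ to be finite (respectively trivial), and (i) supplies connectedness in (iv).

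The main work lies in (v). Connectedness of $H^I$ comes from (i), and simplicity follows from the fact that folding an irreducible Dynkin diagram by a pinning-preserving automorphism produces an irreducible root system (by inspection of the possible folds). Simply-connectedness amounts to the equality $X_*(T^{I,\circ})=X_*(T)^I = Q^\vee_{H^I}$. For simply-connected $H$, $X_*(T)=Q^\vee_H$ has the simple coroots as a basis, hence $X_*(T)^I$ is freely generated by the orbit sums $\sum_{\beta\in\mathcal O}\beta^\vee$. In every non-degenerate fold these orbit sums coincide with the coroots of the folded simple roots in $H^{I,\circ}$, giving the desired equality. The hard part---and sole exception---will be the degenerate $A_{2n}$ fold with nontrivial involution: here the orbit $\{\alpha_n,\alpha_{n+1}\}$ of the middle simple roots has the additional feature that $\alpha_n+\alpha_{n+1}$ is itself a root of $H$ (and is $I$-fixed), the folded root system is $B_n$, and a direct calculation in coordinates shows that the short simple coroot of this $B_n$ equals $2(\alpha_n^\vee+\alpha_{n+1}^\vee)$, not the orbit sum itself. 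Consequently $Q^\vee_{H^I}\subset X_*(T)^I$ has index $2$, forcing $H^I=\SO_{2n+1}$ rather than $\on{Spin}_{2n+1}$. To pin down the isomorphism $H^I\cong\SO_{2n+1}$ explicitly I would realize the outer involution on $\SL_{2n+1}$ as $g\mapsto J^{-1}(g^T)^{-1}J$ for an anti-diagonal symmetric form $J$, making $H^I$ visibly the corresponding special orthogonal group.
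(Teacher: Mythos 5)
Your proposal is correct and proves the same statements, but it takes a noticeably more hands-on route than the paper for parts (ii) and (v), while being slightly less streamlined in (i).

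For (i), the paper handles both the adjoint and simply-connected cases with one observation: $X^*(T)$ is an induced (indeed, permutation) $I$-module since $I$ permutes the $\bbZ$-basis of simple roots (adjoint case) or of fundamental weights (simply-connected case), so $X^*(T)_I$ is automatically torsion-free. You observe this only for the adjoint case and then fall back on a case-by-case inspection or Steinberg's connectedness theorem for the simply-connected case; your route works, but the uniform argument is available and you should use it.

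For (ii), the paper deduces $Z(H)^I \cdot H^{I,\circ} = H^I$ from Proposition 4.6 of \cite{Hai15}, then reduces to showing $Z(H^{I,\circ}) \subseteq Z(H)^I$ via the description of the simple roots of $H^{I,\circ}$ as restrictions of simple roots of $H$. Your alternative is a direct Lie-algebra argument: you show $Z(H^I)$ must lie in $T^I$ because no root of $H$ vanishes on $T^{I,\circ}$, and then use $\sigma$-compatible root-vector orbit sums in $\Lie(H^I)$ to force every root to kill any central element. This avoids the citation at the cost of a little computation (and requires implicitly a $\sigma$-equivariant choice of root vectors, which exists since $I$ preserves a pinning). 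Both approaches need ${\rm char}(\kappa) \neq 2$ at the same place, namely the identification of the root system of $H^{I,\circ}$ with restrictions of roots of $H$.

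For (v), the paper dualizes: it reduces simply-connectedness of $H^I$ to adjointness of $H^{I,\vee}$, then invokes the duality $\bbZ\Phi^\vee(H^I)$ has basis $N'_I(\Delta^\vee(H))$ from \cite[Prop.\,3.5]{Hai18} and compares modified vs.\ unmodified norms, which differ precisely for type $A_{2n}$. You instead compute directly that $X_*(T)^I$ is free on orbit sums of simple coroots (since $X_*(T) = Q^\vee_H$ is a permutation module for simply-connected $H$), and then check fold-by-fold whether these orbit sums coincide with the simple coroots of $H^{I,\circ}$. Your computation that in the $A_{2n}$ degenerate fold the short simple $B_n$-coroot is $2(\alpha_n^\vee + \alpha_{n+1}^\vee)$, giving index $2$, is correct. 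Your approach is more elementary and self-contained; the paper's is shorter because it outsources the key lattice comparison to \cite{Hai18}. Both identify the same list of non-degenerate folds ($A_{2n-1}$, $D_{n+1}$, $D_4$ with triality, $E_6$) where the orbit sums hit the coroot lattice on the nose.
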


\begin{proof}
By e.g.\,\cite[Prop.\,4.1]{Hai15} we know $\pi_0(T^I) \overset{\sim}{\rightarrow} \pi_0(H^I)$. If $H$ is adjoint or simply-connected, then $X^*(T)$ is an induced $I$-module, and hence $T^I$ is connected because $X^*(T^I) = X^*(T)_I$ is $\bbZ$-free. This proves (i). Note this part holds with no assumption on ${\rm char}(\kappa)$.

For (ii), note that $(T_{\rm ad})^I$ is connected, and thus by \cite[Prop.\,4.6]{Hai15} we have $Z(H)^I ~ H^{I, \circ} = H^I$, and hence $Z(H)^I ~ Z(H^{I,\circ}) = Z(H^I)$. Now by \cite[proof of Prop.\,4.1]{Hai15}, when ${\rm char}(\kappa) \neq 2$ the simple roots for $(H^{I,\circ}, T^{I,\circ})$ consist precisely of the restrictions to $T^{I, \circ}$ of the simple roots for $(H,T)$.  It follows that $z \in Z(H^{I, \circ})$ is killed by all the roots of $(H,T)$, hence belongs to $Z(H)$. This proves $Z(H^{I,\circ}) \subseteq Z(H)^I$, and (ii) follows.  Parts (iii) and (iv) follow from (ii).

Part (v): Assume $H$ is simple and simply connected. By (i), $H^I$ is connected. Note that if $\tilde{\alpha}$ is the highest root for $(H, T)$, then the $I$-average $\tilde{\alpha}^\diamond$ is highest for $(H^{I,\circ}, T^{I,\circ})$, and hence the root system for the latter is irreducible since it has a unique highest root. This proves $H^I$ is simple.  Consider the dual group $H^{I, \vee}$, with dual torus $T^{I, \vee}$. It is enough to determine when $H^{I, \vee}$ is adoint. We have $X_*(T^{I, \vee}) = X^*(T^I) = X_*(T^\vee)_I$ is dual to $X^*(T^{\vee})^I = {\rm Hom}(X_*(T^\vee)_I, \bbZ)$ (note that $X_*(T^\vee)_I$ is free).  Therefore $X^*(T^{I, \vee}) \cong X^*(T^\vee)^I = (\bbZ\Phi^\vee(H))^I$.  Therefore to show $H^{I, \vee}$ is adjoint, it is equivalent to show that
$$
\bbZ \Phi^\vee(H^I) ~ \overset{\sim}{\hookrightarrow} ~ (\bbZ\Phi^\vee(H))^I.
$$
The right hand side has as $\bbZ$-basis the set $N_I(\Delta^\vee(H))$ of unmodified norms of simple coroots of $H$  (see the definition of the operations $N_I$ and $N'_I$ in \cite[Def.\,3.1]{Hai18}\footnote{We take this opportunity to point out a typo in the definition of $v^\diamond$ above \cite[Def.\,3.1]{Hai18}: it should read $v^\diamond := \frac{1}{|I|} \sum_{\sigma \in I} \sigma(v)$.}). On the other hand, using the notation of \cite[$\S3$]{Hai18}, the proof of \cite[Prop.\,4.1]{Hai15} shows that $\Phi(H^I) = {\rm res}_I\Phi(H) \cong (\Phi(H)^\diamond)_{\rm red}$. By the duality result of \cite[Prop.\,3.5]{Hai18}, $\bbZ\Phi^\vee(H^I)$ has a $\bbZ$-basis given by $N'_I(\Delta^\vee(H))$.  Therefore, $H^{I, \vee}$ is adjoint if and only if $N_I'(\Delta^\vee(H)) = N_I(\Delta^\vee(H))$, which happens if and only if the $I$-action is trivial or $\Phi(H)$ is not of type $A_{2n}$.  The rest of the assertions of part (v) are clear.
\end{proof}


\begin{thebibliography}{99999999}










\bibitem[Arz09]{Arz09} K.\,Arzdorf: {\it On local models with special parahoric level structure}, Michigan Math.\,J.\,{\bf 58} (2009), no.3, \,683-710.





\bibitem[BBD82]{BBD82} A.\, Beilinson, J.\, Bernstein, P.\, Deligne: {\it Faisceaux pervers}, Ast\'erisque \textbf{100} (1982), 5-171.



\bibitem[BZ90]{BZ90} A. D. Berenshtein, A. V. Zelevinskii: {\it When is the multiplicity of a weight equal to $1$?}, Funct. Anal. Appl. \textbf{24} (1990), no. 4, 259-269. 



\bibitem[BL00]{BL00} S.\,Billey, V.\,Lakshmibai: {\it Singular Loci of Schubert Varieties}. Progress in Math.\,{\bf 182}, Birkh\"{a}user, 2000. 251 pp.\,+xii.



\bibitem[Bou]{Bou} N. Bourbaki: {\it \'El\'ements de Math\'ematique. Fasc. XXXIV. Groupes et alg\`ebre de Lie. Chapitre: IV: Groupes de Coxeter et syst\`eme de Tits. Chapitre V: Groupes engendr\'es par des r\'eflexions. Chapitre VI: Syst\`emes de racines}, in: Actualit\'es Scientifiques et Industrielles, vol. 1337, Hermann, Paris, 1968, 288 pp.





\bibitem[BT72]{BT72} F. Bruhat and J. Tits: {\it Groupes r\'eductifs sur un corps local I. Donn\'ees radicielles valu\'ees}, Inst. Hautes \'Etudes Sci. Publ. Math. \textbf{41} (1972), 5-251. 

\bibitem[BT84]{BT84} F.\, Bruhat, J.\, Tits: {\it Groupes r\'eductifs sur un corps local II. Sch\'ema en groupes. Existence d'une donn\'ee radicielle valu\'ee}, Inst. Hautes \'Etudes Sci. Publ. Math. \textbf{60} (1984), 197-376. 










\bibitem[dHL]{dHL} M.\,A.\,de Cataldo, T.\,Haines, and L.\,Li: {\it Frobenius semisimplicity for convolution morphisms}, Math.\,Zeitschrift \,{\bf 289} (2018), 119-169. DOI 10.1007/s00209-017-1946-4.













\bibitem[EM99]{EM99} S. Evens and I. Mirkovi\'c, {\it Characteristic cycles for the loop Grassmannian and nilpotent orbits}, Duke Math. J. \textbf{97} (1999), no. 1, 109-126.

\bibitem[EGA3]{EGA3} A.\,Grothendieck, J.\,Dieudonn\'e: {\it El\'ements de g\'eom\'etrie alg\'ebrique: III. \'Etude cohomologique des faisceaux coh\'erents, Premi\`ere partie}, Publications Math\'ematiques de l'IH\'ES \textbf{11}, pp. 5-167.
















  






\bibitem[Hai14]{Hai14} T.\,Haines: {\it The stable Bernstein center and test functions for Shimura varieties}, In {\it Automorphic Forms and Galois Representations} vol.2, edited by F.~Diamond, P.~Kassaei and M.~Kim, 118-186.  London Math.~Soc.~Lecture Notes {\bf 415}. Cambridge University Press, 2014.



\bibitem[Hai15]{Hai15} T.\,Haines: {\it On Satake parameters for representations with parahoric fixed vectors}, IMRN \textbf{20} (2015), 10367-10398.


\bibitem[Hai18]{Hai18} T.\,Haines: {\it Dualities for root systems with automorphisms and applications to non-split groups},  Representation Theory {\bf 22} (2018), 1-26.




\bibitem[HLR]{HLR} T.\,Haines, J.~N.~P.~Louren\c{c}o, T.\,Richarz: {\it On the normality of Schubert varieties in twisted affine flag varieties}, new version of arXiv:1806.11001, in preparation.

\bibitem[HRa]{HRa} T.\,Haines, T.\,Richarz: {\it The test function conjecture for parahoric local models}, preprint (2018), arXiv:1801.07094. 

\bibitem[HRb]{HRb} T.\,Haines, T.\,Richarz: {\it The test function conjecture for local models of Weil restricted groups}, preprint (2018), 	arXiv:1805.07081. 

\bibitem[HRc]{HRc} T.\,Haines, T.\,Richarz: {\it Normality and Cohen-Macaulayness of parahoric local models}, preprint. arXiv:1903.10585.




\bibitem[HPR]{HPR} X.\,He, G.\,Pappas, M.\,Rapoport: {\it Good and semi-stable reductions of Shimura varieties}, preprint, arXiv:1804.09615.





\bibitem[Ho95]{Ho95} R. Howe: {\it Perspectives on Invariant Theory: Schur duality, multiplicity-free actions and beyond}, Israel Conf. Math. Proc. \textbf{8} (1995), 1-182. 






\bibitem[KL79]{KL79} D.\,Kazhdan, G.\,Lusztig: {\it Representations of Coxeter groups and Hecke algebras}, Invent. Math. {\bf 53} (1979), 165-184.

\bibitem[KL80]{KL80} D.\,Kazhdan, G.\,Lusztig: {\it Schubert varieties and 
  Poincar\'{e} duality}, Proc.\,Symp.\,Pure Math.\,{\bf 36} (1980), 185--203.






\bibitem[KW01]{KW01} R.\,Kiehl and R.\,Weissauer: {\it Weil conjectures, perverse sheaves and $\ell$-adic Fourier transform}, Springer (2001), pp 375.





\bibitem[Ko97]{Ko97} R.\,Kottwitz: {\it Isocrystals with additional structures II}, Compos. Math. \textbf{109} (1997), 255-339.







\bibitem[Ku96]{Ku96} S.\,Kumar: {\it The nil Hecke ring and singularity of Schubert varieties}, Invent.\,Math.\,{\bf 123} (1996), no.\,3, 471-506.








\bibitem[LS97]{LS97} Y.\,Laszlo, C.\,Sorger: {\it The line bundles on the moduli of parabolic $G$-bundles over curves and their sections}, Ann. Sci. \'Ecole Norm. Sup. (4) \textbf{30} (1997), no. 4, 499-525.









\bibitem[Lu83]{Lu83} G.\,Lusztig: {\it Singularities, character formulas, and a $q$-analogue of weight multiplicities}, Analysis and Topology on Singular Spaces, II, III, Ast\'erisque, Luminy, 1981, vol. \textbf{101-102}, Soc. Math. France, Paris (1983), pp. 208-229.






\bibitem [Lus03]{Lus03} G.\,Lusztig, {\em Hecke algebras with unequal parameters}, CRM Monographs Ser. {\bf 18}, Amer.\,Math.\,Soc.\, 2003, 136p.


\bibitem[MOV05]{MOV05} A. Malkin, V. Ostrik and M. Vybornov, {\it The minimal degeneration singularities in the affine Grassmannians}, Duke Math. J. \textbf{126} (2005), 233-249.












\bibitem[Mue08]{Mue08} A.\,M\"uller: {\it Singularit\"aten minimaler Degenerationen in affinen Grassmannschen, insbes.~in positiver Charakteristik}, Diplomarbeit (2008), Bonn University.

\bibitem[NP01]{NP01} Ng\^o B.\,C.\,and P.\,Polo: {\it R\'esolutions de Demazure affines et formule de Casselman-Shalika g\'eom\'etrique}, J.\,Algebraic Geom.\,\textbf{10} (2001), no. 3, 515-547.

\bibitem[P00]{P00} G.\,Pappas: {\it On the arithmetic moduli schemes of PEL Shimura varieties}, J. Alg. Geom. \textbf{9} (2000), 577-605.


\bibitem[PR08]{PR08} G.\,Pappas, M.\,Rapoport: {\it Twisted loop groups and their affine flag varieties}, Adv. Math. \textbf{219} (2008), 118-198. 

\bibitem[PR09]{PR09} G.\,Pappas, M.\,Rapoport: {\it Local models in the ramfied case. III. Unitary groups}, J.\,Inst.\,Math.\,Jussieu {\bf 8} (2009), 507-564.


\bibitem[PZ13]{PZ13} G.\,Pappas, X.\,Zhu: {\it Local models of Shimura varieties and a conjecture of Kottwitz}, Invent.\,Math.\,\textbf{194} (2013), 147-254.

\bibitem[PY02]{PY02} G.\,Prasad, J.-K.\,Yu: {\it On finite group actions on reductive groups and buildings}, Invent. Math. \textbf{147} (2002), 545-560.











\bibitem[Ri13]{Ri13} T. Richarz: {\it Schubert varieties in twisted affine flag varieties and local models}, Journal of Algebra \textbf{375} (2013), 121-147.




\bibitem[Ri16]{Ri16} T.\,Richarz: {\it Affine Grassmannians and geometric Satake equivalences}, IMRN \textbf{12} (2016), 3717-3767.  















\bibitem[StaPro]{StaPro} Stacks Project, Authors of the stacks project, available at http://stacks.math.columbia.edu/. 






\bibitem[Ti77]{Ti77} J.\,Tits: {\it Reductive groups over local fields}, Automorphic forms, representations and $L$-functions, in: Proc. Sympos. Pure Math., Corvallis, OR, 1977, vol. XXXIII, Amer. Math. Soc., Providence, RI, 1979, pp. 29-69. 







\bibitem[Zhu15]{Zhu15} X.\,Zhu: {\it The Geometrical Satake Correspondence for Ramified Groups}, with an appendix by T. Richarz and X. Zhu, Ann. Sc. de la ENS, s\'erie \textbf{48}, fascicule 2 (2015), 409-451.


\end{thebibliography}
\end{document}